\titleformat{\section}[hang]%
{\bfseries\large}{\thesection.}{1ex}{}%
\titleformat{\subsection}[hang]%
{\bfseries}{\thesubsection}{1ex}{}%
\newtheorem{thm}{Theorem}[section]
\newtheorem{lemma}[thm]{Lemma}
\newtheorem{cor}[thm]{Corollary}
\newtheorem{prop}[thm]{Proposition}
\newtheorem*{thm*}{Theorem}
\newtheorem*{prop*}{Proposition}
\theoremstyle{remark}
\newtheorem{rk}[thm]{Remark}
\newtheorem{constr}[thm]{Construction}
\newtheorem{warn}[thm]{Warning}
\newtheorem{ex}[thm]{Example}
\newtheorem*{claim*}{Claim}
\renewcommand\qedsymbol{$\triangle$}}
\theoremstyle{definition}
\newtheorem{defi}[thm]{Definition}
\numberwithin{equation}{section}
\newcommand{\Ho}{\textup{Ho}}
\newcommand{\nerve}{\textup{N}}
\newcommand{\h}{\textup{h}}
\newcommand{\cat}[1]{\textbf{\textup{#1}}}
\newcommand{\blank}{{\textup{--}}}
\newcommand{\Hom}{{\textup{Hom}}}
\newcommand{\id}{{\textup{id}}}
\newcommand{\im}{\mathop{\textup{im}}}
\newcommand{\colim}{\mathop{\textup{colim}}\nolimits}
\newcommand{\forget}{\mathop{\textup{forget}}\nolimits}
\newcommand{\Fun}{\textup{Fun}}
\newcommand{\supp}{\mathop{\textup{supp}}}
\newcommand{\core}{\mathop{\textup{core}}}
\newcommand{\Ob}{\mathop{\textup{Ob}}}
\newcommand{\zigzag}{\mathord{\text{\raisebox{.8pt}{$\scriptstyle\bullet$}\kern-4pt$\to$\kern-1pt\raisebox{.8pt}{$\scriptstyle\bullet$}\kern-1pt$\gets$\kern-4pt\raisebox{.8pt}{$\scriptstyle\bullet$}}}}
\newcommand{\Sd}{\mathop{\textup{Sd}}\nolimits}
\newcommand{\Ex}{\mathop{\textup{Ex}}\nolimits}
\newcommand{\hq}{\mathord{\textup{/\kern-2.5pt/}}}
\newcommand{\myh}{\mathord{\textup{`$\mskip-.1\thinmuskip h\mskip-.4\thinmuskip$'}}}
\newcommand{\triv}{\mathord{\textup{triv}}}
\let\phi=\varphi
\let\del=\partial
\def\twocell[#1]{\arrow[#1, dash, phantom, "\Rightarrow"{scale=1.125, yshift=-.4pt, description, allow upside down, sloped, inner sep=0pt}]}
\def\mylabel#1#2{{\edef\@currentlabel{#2}\label{#1}}}
\title{\vskip 5pt  \bf  CATEGORICAL MODELS OF\\ UNSTABLE $\bm G$-GLOBAL HOMOTOPY THEORY}
\author{\itshape\bfseries {Tobias LENZ}}
\date{}
\begin{document}
\maketitle
\thispagestyle{plain}
\vskip 25pt
\begin{adjustwidth}{0.5cm}{0.5cm}
{\small
{\bf Abstract.}
    We prove that the category $\cat{$\bm G$-Cat}$ of small categories with $G$-action forms a model of unstable $G$-global homotopy theory for every discrete group $G$, generalizing Schwede's global model structure on $\cat{Cat}$. As a consequence, we prove that $\cat{$\bm G$-Cat}$ models proper $G$-equivariant homotopy theory not only when we test weak equivalences on fixed points, but also when we test them on categorical \emph{homotopy} fixed points.\\
{\bf Keywords.} Equivariant homotopy theory, nerve functor, model categories, Thomason model structure.\\
{\bf Mathematics Subject Classification (2010).} 55P91, 18G55}
\end{adjustwidth}

\section*{Introduction}
It is an observation going back to Quillen \cite[VI.3]{nerve-homotopy-category} that every topological space is weakly equivalent to the classifying space of a small category, and that in fact taking classifying spaces yields an equivalence between the homotopy category of small categories (formed with respect to those functors that induce homotopy equivalences of classifying spaces) and the usual unstable homotopy category. This comparison was later lifted to a model categorical statement by Thomason \cite{thomason-cat} who constructed a model structure on the category $\cat{Cat}$ of small categories with the above weak equivalences and proved that it is Quillen equivalent to the usual Kan-Quillen model structure on simplicial sets.

In more recent years, several generalizations and refinements of Thomason's and Quillen's results have been established:

In \cite{g-wit}, Bohmann, Mazur, Osorno, Ozornova, Ponto, and Yarnall constructed for any discrete group $G$ a model structure on the category $\cat{$\bm G$-Cat}$ of small $G$-categories in which a map is a weak equivalence if and only if it induces weak equivalences on all fixed points. Moreover, they proved that $\cat{$\bm G$-Cat}$ is Quillen equivalent to the usual $G$-equivariant model structure on $G$-simplicial sets, thereby establishing $\cat{$\bm G$-Cat}$ as a model of unstable \emph{$G$-equivariant homotopy theory}. This result was strengthened by May, Stephan, and Zakharevich \cite{equivariant-posets} who showed that already the full subcategory of $G$-posets models the same homotopy theory, generalizing a non-equivariant result due to Raptis \cite{posets}.

On the other hand, we can consider \emph{global homotopy theory} \cite{schwede-book} which, roughly speaking, studies equivariant phenomena that exist uniformly across suitable families of groups, like all finite groups or all compact Lie groups. In this setting, Schwede \cite{schwede-cat} refined Thomason's result by constructing the so-called \emph{global model structure} on $\cat{Cat}$ and proving that it is Quillen equivalent to the orbispace model of unstable global homotopy theory with respect to finite groups.

In the present paper, we generalize Schwede's result by establishing $\cat{$\bm G$-Cat}$ for every discrete group $G$ as a model of unstable \emph{$G$-global homotopy theory} in the sense of \cite[Chapter~1]{g-global}. $G$-global homotopy theory arises for example naturally in the study of global infinite loop spaces \cite[Chapter~2]{g-global} or in the form of various `Galois-global' phenomena \cite{schwede-galois}.

For every $G$, $G$-global homotopy theory admits a Bousfield localization to \emph{proper} $G$-equivariant homotopy theory---i.e.~equivariant homotopy theory where we only consider the fixed points for \emph{finite} subgroups---and we make the localization functor explicit for the model constructed in \cite{g-wit}. As a consequence of this comparison, we obtain a new model structure on $\cat{$\bm G$-Cat}$ that still models proper $G$-equivariant homotopy theory, but whose weak equivalences are now tested on \emph{categorical homotopy fixed points} (i.e. homotopy fixed points formed with respect to the underlying equivalences of categories) as opposed to ordinary fixed points. There is then a Quillen equivalence between this new model structure and the one of \cite{g-wit}, whose right adjoint is given by
\begin{equation}\tag{\ensuremath{*}}\label{eq:hfp-vs-fp}
\Fun(EG,\blank)\colon\cat{$\bm G$-Cat}_{\textup{homotopy fixed points}}\to\cat{$\bm G$-Cat}_{\textup{fixed points}}
\end{equation}
where $EG$ is the contractible groupoid with object set $G$, equipped with the evident $G$-action.

Our interest in the above model structure and the Quillen equivalence $(\ref{eq:hfp-vs-fp})$ comes from \emph{equivariant algebraic $K$-theory} as studied in \cite{guillou-may,merling}. Namely, as observed in \cite[§3]{gmmo}, $(\ref{eq:hfp-vs-fp})$ lifts to a functor from the category of small symmetric monoidal categories with $G$-action to the category of so-called \emph{genuine symmetric monoidal $G$-categories}. Equivariant algebraic $K$-theory in the sense of \cite{guillou-may} is defined in terms of the latter, and it is only this lift of $(\ref{eq:hfp-vs-fp})$ that allows to define the $G$-equivariant algebraic $K$-theory of a plain symmetric monoidal category with $G$-action.

In the sequel \cite{genuine-vs-naive}, we will prove that also this lift induces an equivalence of homotopy theories with respect to the above notions of weak equivalences; in particular, from the point of view of algebraic $K$-theory, there is no harm in just working with ordinary symmetric monoidal categories with $G$-action. While the argument we will give in \cite{genuine-vs-naive} will be formally mostly independent of the results of the present paper, the equivalence $(\ref{eq:hfp-vs-fp})$ provided much of the original motivation for \cite{genuine-vs-naive}. Moreover, the proof we give here requires much less machinery than its symmetric monoidal counterpart, and we think it is actually instructive to have a direct argument available in this case.

\subsection*{Organization}
In Section~\ref{sec:preliminaries} we recall some basic facts about Thomason's model structure on $\cat{Cat}$ as well as our reference model of unstable $G$-global homotopy theory in terms of simplicial sets equipped with an action of a specific simplicial monoid.

Section~\ref{sec:transfer} is devoted to establishing a general criterion for the existence of transferred model structures, which we then employ in Section~\ref{sec:monoid} to construct (under a mild technical assumption) a model structure on the category of small categories with the action of a given categorical monoid and to compare it to its simplicial counterpart, partially generalizing \cite{g-wit}. Using this, we establish a categorical analogue of our reference model of $G$-global homotopy theory and prove that these two models are Quillen equivalent.

Finally, we construct the desired $G$-global model structure on $\cat{$\bm G$-Cat}$ in Section~\ref{sec:g-cat} and compare it to our previous models of $G$-global homotopy theory as well as the proper $G$-equivariant model structure on $\cat{$\bm G$-Cat}$.

\subsection*{Acknowledgements}
I would like to thank Stefan Schwede for helpful comments on a previous version of parts of this article. I am moreover grateful to the anonymous referee for valuable feedback, in particular making me aware of nominal sets.

I would like to thank the Max Planck Institute for Mathematics in Bonn for their hospitality and financial support during the time the first version of this article was written. At that time, I moreover was an associate member of the Hausdorff Center for Mathematics, funded by the Deutsche Forschungsgemeinschaft (DFG, German Research Foundation) under Germany's Excellence Strategy (GZ 2047/1, project ID390685813).

\section{Preliminaries}\label{sec:preliminaries}
\subsection{\texorpdfstring{$\bm G$}{G}-global homotopy theory} \hspace{1sp}{\cite[Chapter~1]{g-global}} introduces several equivalent models of unstable $G$-glo\-bal homotopy theory and studies their relation to $G$-equivariant and global homotopy theory. Here we will recall one of these models, which is based on a specific monoid $\mathcal M$ that we call the \emph{universal finite group}:

\begin{defi}
We write $\omega=\{0,1,\dots\}$ for the set of natural numbers and we denote by $\mathcal M$ the monoid (under composition) of all injections $\omega\to\omega$.
\end{defi}

\begin{rk}
    In addition to their role in ($G$-)global homotopy theory \cite{g-global,schwede-k-theory}, which we will detail below, $\mathcal M$-actions have been studied in various places in the literature, for example in relation to the homotopy groups of symmetric spectra \cite{schwede-semistable} or in the study of $E_\infty$-monoids \cite{I-vs-M-1-cat}. In several of these applications, one imposes an additional \emph{tameness} condition on the $\mathcal M$-action, demanding that the action on any given element $x$ only depend on the values of an injection $u\in\mathcal M$ on a suitable \emph{finite} set $\supp(x)\subset\omega$. On the other hand, sets with an action of the maximal subgroup $\core\mathcal M$ (i.e.~the group of bijective self-maps of $\omega$) satisfying an analogous notion of tameness have been studied in logic and theoretical computer science under the name \emph{nominal sets} \cite{nominal-sets}, and together with the equivariant maps they form a topos, called the \emph{Schanuel topos}. It is not hard to prove, and also follows by combining \cite[Proposition~5.6]{I-vs-M-1-cat} with \cite[Theorem~6.8]{nominal-sets}, that a \emph{tame} $\mathcal M$-action is already uniquely determined by the action of the invertible elements, i.e.~the Schanuel topos is equivalent to the category of tame $\mathcal M$-sets via the forgetful functor.

    In contrast to that, we will work with general $\mathcal M$-actions throughout the present paper (which are \emph{not} determined by the action of the maximal subgroup), and we instead refer the reader e.g.~to \cite[Sections~1.3 and 2.1]{g-global} or \cite[Section~4]{genuine-vs-naive} for the role of tameness in ($G$-)global homotopy theory.
\end{rk}

\begin{defi}
A finite subgroup $H\subset\mathcal M$ is called \emph{universal} if $\omega$ with the restriction of the tautological $\mathcal M$-action is a {complete $H$-set universe}.
\end{defi}

Here we call a countable $H$-set $\mathcal U$ a \emph{complete $H$-set universe} if every other countable $H$-set embeds into $\mathcal U$ equivariantly.

It is in fact not hard to show that every finite group $H$ admits an injective homomorphism $i\colon H\to\mathcal M$ such that $i(H)$ is universal, and that any two such homomorphisms differ only by conjugation with an invertible element of $\mathcal M$ \cite[Lemma~1.2.8]{g-global}. In particular, if $X$ is any simplicial set with an $\mathcal M$-action, then we can associate to this an $H$-fixed point space for any abstract finite group $H$ by picking such a homomorphism $i\colon H\to\mathcal M$ and taking $i(H)$-fixed points. However, while this space is independent of the chosen homomorphism $i$ up to isomorphism, this isomorphism itself is \emph{not} canonical, even up to homotopy. One way to solve this is to pass to a certain simplicial extension of the monoid $\mathcal M$, which relies on the following construction:

\begin{constr}
Recall that the functor $\cat{Cat}\to\cat{Set}$ sending a small category to its set of objects admits a right adjoint $E$ (usually called the `chaotic' or `indiscrete' category functor). Explicitly, $EX$ is the category with object set $X$ and precisely one morphism $x\to y$ for any $x,y\in X$. Composition in $EX$ is defined in the unique possible way, and if $f\colon X\to Y$ is a map of sets, then $Ef$ is the unique functor which is given on objects by $f$.

Likewise, the functor $\cat{SSet}\to\cat{Set}$ sending a simplicial set $Y$ to its set of $0$-simplices admits a right adjoint $E$. Explicitly, $E$ is given on objects by $(EX)_n=\prod_{i=0}^n X\cong\textup{maps}(\{0,\dots,n\},X)$ with the evident functoriality in $n$ and $X$.

Note that the nerve of the category $EX$ is indeed canonically isomorphic to the simplicial set of the same name, justifying that we won't distinguish between them notationally.
\end{constr}

As $E$ is a right adjoint, it in particular preserves products so that the category or simplicial set $E\mathcal M$ inherits a natural monoid structure from $\mathcal M$.

\begin{warn}
    If $G$ is a discrete group acting on a set $X$, one can form the \emph{translation category} $X\mskip.4\thinmuskip/\mskip-.4\thinmuskip/\mskip.4\thinmuskip G$, whose objects are given by the set $X$ and with $\Hom(x,y)=\{g\in G:g.x=y\}$ for any $x,y\in X$. If $X=G$ with its usual action, this agrees with the indiscrete category $EG$, see e.g.~\cite[Proposition~1.8]{categorical-classifying}, and accordingly some sources like \cite[Section~10]{mandell-translation} refer to $EG$ as the translation category (while still using the above notation). Beware however that while the construction of the translation category makes perfect sense for any monoid action, the result for the monoid $\mathcal M$ would be different from the indiscrete category $E\mathcal M$ (in particular, the translation category $\mathcal M\mskip.4\thinmuskip/\mskip-.4\thinmuskip/\mathcal M$ contains nontrivial endomorphisms).
\end{warn}

\begin{defi}
We write $\cat{$\bm{E\mathcal M}$-$\bm G$-SSet}$ for the category whose objects are the simplicial sets equipped with an action of the simplicial monoid $E\mathcal M\times G$, and whose morphisms are the $(E\mathcal M\times G)$-equivariant maps. A map $f\colon X\to Y$ in $\cat{$\bm{E\mathcal M}$-$\bm G$-SSet}$ is called a \emph{$G$-global weak equivalence} if $f^\phi$ is a weak equivalence for every universal subgroup $H\subset\mathcal M$ and every homomorphism $\phi\colon H\to G$; here we write $(\blank)^\phi$ for the fixed points with respect to the \emph{graph subgroup} $\Gamma_{H,\phi}\mathrel{:=}\{(h,\phi(h)) : h\in H\}\subset\mathcal M\times G$.
\end{defi}

Next, we want to recall the \emph{$G$-global model structure} on $\cat{$\bm{E\mathcal M}$-$\bm G$-SSet}$. This is actually just a particular instance of the following proposition, which generalizes the usual equivariant model structures for group actions, see e.g.~\cite[Example~2.14]{cellular}, to monoid actions:

\begin{prop}\label{prop:equiv-model-structure}
Let $M$ be a simplicial monoid and let $\mathcal F$ be a collection of finite subgroups of $M_0$. Then there exists a unique model structure on $\cat{$\bm M$-SSet}$ in which a map $f$ is a weak equivalence or fibration if and only if $f^H$ is a weak equivalence or fibration, respectively, in the usual Kan-Quillen model structure on $\cat{SSet}$ for each $H\in\mathcal F$. We will refer to this as the \emph{$\mathcal F$-model structure} and to its weak equivalences as the \emph{$\mathcal F$-weak equivalences}.

The $\mathcal F$-model structure is combinatorial with generating cofibrations
\begin{equation*}
\{M/H\times\del\Delta^n\hookrightarrow M/H\times\Delta^n:H\in\mathcal F,n\ge0\}
\end{equation*}
and generating acyclic cofibrations
\begin{equation*}
\{M/H\times\Lambda_k^n\hookrightarrow M/H\times\Delta^n:H\in\mathcal F,0\le k\le n\}.
\end{equation*}
Moreover it is simplicial (for the obvious enrichment), proper, and a commutative square is a homotopy pushout or pullback if and only if the induced square on $H$-fixed points is a homotopy pushout or pullback, respectively, in $\cat{SSet}$ for every $H\in\mathcal F$. Pushouts along underlying cofibrations are homotopy pushouts.

Finally, the $\mathcal F$-weak equivalences are stable under filtered colimits.
\begin{proof}
\cite[Proposition~1.1.2]{g-global} shows all of these except for the characterizations of homotopy pushouts and pullbacks. The statement about homotopy pullbacks is obvious, while the ones about homotopy pushouts are instances of \cite[Proposition~1.1.6 and Lemma~1.1.14]{g-global}.
\end{proof}
\end{prop}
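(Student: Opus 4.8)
The plan is to obtain the asserted model structure by right-transferring the Kan--Quillen model structure along the right adjoint $U\mathrel{:=}\big((\blank)^H\big)_{H\in\mathcal F}\colon\cat{$\bm M$-SSet}\to\prod_{H\in\mathcal F}\cat{SSet}$, where the target carries the product model structure (a model category since $\mathcal F$, being a collection of subsets of $M_0$, is a set). First I would spell out the adjunction $F\dashv U$: the left adjoint sends a tuple $(A_H)_{H\in\mathcal F}$ to $\coprod_{H\in\mathcal F}M/H\times A_H$, where $M/H$ is the levelwise quotient of $M$ by the right-multiplication action of the image of $H$ under the iterated degeneracy $M_0\to M_n$, equipped with its residual left $M$-action; the adjunction rests on the usual natural bijection $\Hom_{\cat{$\bm M$-SSet}}(M/H\times A,X)\cong\Hom_{\cat{SSet}}(A,X^H)$, for which one uses that $H$ is a \emph{subgroup} (not merely a submonoid) of $M_0$, so that the cosets partition each $M_n$ and $M/H\times(\blank)$ really corepresents $(\blank)^H$. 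Since $\cat{$\bm M$-SSet}$ is the category of algebras for the cocontinuous monad $M\times(\blank)$ on the locally presentable category $\cat{SSet}$, it is itself locally presentable, hence bicomplete with all objects small, so any set of maps permits the small object argument; and $F$ of the generating (acyclic) cofibrations of $\prod_{H\in\mathcal F}\cat{SSet}$ is exactly the set of maps displayed in the statement, because $F$ of a generator supported in the coordinate $H$ has all remaining summands $M/H'\times\emptyset=\emptyset$ and so reduces to $M/H\times(\blank)$ applied to that generator.

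By the standard transfer theorem for cofibrantly generated model categories it then only remains to check the acyclicity condition: for each $H'\in\mathcal F$, the functor $(\blank)^{H'}$ carries every relative $FJ$-cell complex to a Kan--Quillen weak equivalence. The crucial point is that $(\blank)^{H'}\colon\cat{$\bm M$-SSet}\to\cat{SSet}$, although a right adjoint, nonetheless preserves pushouts along monomorphisms and preserves filtered colimits. Both claims may be checked in each simplicial degree $n$ --- since $(X^{H'})_n=(X_n)^{H'}$ for the action of $H'\subset M_0$ through the degeneracy $M_0\to M_n$, and since colimits and monomorphisms in $\cat{$\bm M$-SSet}$ are created degreewise --- so they reduce to the corresponding statements for the ordinary fixed-point functor $(\blank)^{H'}\colon\cat{$\bm{H'}$-Set}\to\cat{Set}$. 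There the first is the elementary fact that a pushout $B\sqcup_A C$ of $H'$-sets along a monomorphism $A\hookrightarrow B$ is, as an $H'$-set, the coproduct $C\sqcup(B\setminus A)$ (with which $(\blank)^{H'}$ manifestly commutes), while the second says that the finite limit $(\blank)^{H'}$ commutes with filtered colimits --- the one place where the finiteness of the members of $\mathcal F$ is used. Consequently $(\blank)^{H'}$ sends a relative $FJ$-cell complex to a relative cell complex built from the maps $(M/H)^{H'}\times\Lambda^n_k\hookrightarrow(M/H)^{H'}\times\Delta^n$ ($H\in\mathcal F$, $0\le k\le n$), each of which --- being the product of a fixed simplicial set with the acyclic cofibration $\Lambda^n_k\hookrightarrow\Delta^n$ --- is an acyclic cofibration in $\cat{SSet}$; hence so is the cell complex, and in particular it is a weak equivalence. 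This yields the transferred model structure with the asserted generating (acyclic) cofibrations; it is combinatorial because $\cat{$\bm M$-SSet}$ is locally presentable, and unique because a model structure is determined by its fibrations together with its weak equivalences.

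The remaining assertions follow formally. Each $(\blank)^H$ is right Quillen by construction (it preserves fibrations and acyclic fibrations), so $M/H\times(\blank)\colon\cat{SSet}\to\cat{$\bm M$-SSet}$ is left Quillen and in particular takes (acyclic) cofibrations to (acyclic) cofibrations; also, cofibrations of $\cat{$\bm M$-SSet}$ are monomorphisms, being retracts of relative $FI$-cell complexes. The pushout--product axiom for the evident $\cat{SSet}$-enrichment then follows by checking on generators, where $i\mathbin{\square}j$ with $i=\big(M/H\times\del\Delta^m\hookrightarrow M/H\times\Delta^m\big)$ equals $M/H\times\big((\del\Delta^m\hookrightarrow\Delta^m)\mathbin{\square}j\big)$, i.e.~the left Quillen functor $M/H\times(\blank)$ applied to an (acyclic) cofibration of simplicial sets. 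For properness and the homotopy (co)limit statements I would use that each $(\blank)^H$ preserves all pullbacks and all monomorphisms (being a right adjoint), preserves pushouts along monomorphisms (the key lemma above), and preserves fibrations and weak equivalences (by construction): left- and right-properness of $\cat{$\bm M$-SSet}$ are then inherited fixed-point-wise from those of $\cat{SSet}$, and, since homotopy pullbacks and pushouts may be computed by replacing one leg by a fibration or a cofibration and forming the strict (co)limit (using right- or left-properness accordingly), applying $(\blank)^H$ turns the comparison map of a given square into the comparison map of its square of $H$-fixed points --- so the square is a homotopy pullback, resp.~homotopy pushout, if and only if each of its $H$-fixed-point squares is. The claim about pushouts along underlying cofibrations is then the special case where the leg in question is already a monomorphism: there $(\blank)^H$ carries the pushout square to a pushout square along a cofibration in $\cat{SSet}$, hence to a homotopy pushout. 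Finally, $\mathcal F$-weak equivalences are stable under filtered colimits because each $(\blank)^H$ preserves filtered colimits and Kan--Quillen weak equivalences do.

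I expect the only genuinely substantive step to be the acyclicity condition --- more precisely, the claim that $(\blank)^{H'}$ preserves pushouts along monomorphisms --- with everything else amounting to formal manipulation of the adjunctions together with transport of the good properties of $\cat{SSet}$. In carrying this out I would be most careful about the two standing hypotheses: that $H\subset M_0$ is a subgroup, which is exactly what lets $M/H\times(\blank)$ corepresent $(\blank)^H$, and that the members of $\mathcal F$ are finite, which is exactly what makes $(\blank)^H$ commute with filtered colimits and hence with relative cell complexes.
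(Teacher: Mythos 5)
Your proposal is correct and is essentially the argument underlying the paper's proof, which simply delegates to \cite[Proposition~1.1.2]{g-global} (in turn following Stephan's transfer argument from \cite{cellular}): one transfers along the tuple of fixed-point functors, with the acyclicity condition resting on the facts that $(\blank)^{H}$ commutes with filtered colimits (finiteness of $H$) and with pushouts along monomorphisms (the complement of an $H$-invariant subset being $H$-invariant, which is where $H$ being a sub\emph{group} rather than a submonoid enters). All the remaining properties then follow formally exactly as you describe.
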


Specializing to our situation we get, also see~\cite[Corollary~1.2.34]{g-global}:

\begin{cor}\label{cor:EM-G-SSet-model-structure}
There is a unique model structure on $\cat{$\bm{E\mathcal M}$-$\bm G$-SSet}$ in which a map $f$ is a weak equivalence or fibration if and only if $f^\phi$ is a weak equivalence or fibration, respectively, in the usual Kan-Quillen model structure on $\cat{SSet}$.\qed
\end{cor}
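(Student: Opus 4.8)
The plan is to realize this as a direct instance of Proposition~\ref{prop:equiv-model-structure}. The relevant simplicial monoid is $M\mathrel{:=}E\mathcal M\times G$, with $G$ regarded as a constant simplicial monoid; its monoid of $0$-simplices is $(E\mathcal M)_0\times G=\mathcal M\times G$, since by construction $(E\mathcal M)_0=\mathcal M$ with its original composition monoid structure. For the collection $\mathcal F$ of finite subgroups I would use the graph subgroups: each $\Gamma_{H,\phi}=\{(h,\phi(h)):h\in H\}$ with $H\subset\mathcal M$ universal and $\phi\colon H\to G$ a homomorphism is finite, as projection to the first factor identifies it with $H$. Since combinatoriality in Proposition~\ref{prop:equiv-model-structure} wants $\mathcal F$ to be a set, I would first fix a set $\mathcal R$ of representatives for the isomorphism classes of finite groups, choose for each $H\in\mathcal R$ a universal embedding $i_H\colon H\hookrightarrow\mathcal M$ (possible by \cite[Lemma~1.2.8]{g-global}), and put
\[
\mathcal F\mathrel{:=}\{\,\Gamma_{i_H,\phi}\ :\ H\in\mathcal R,\ \phi\in\Hom(H,G)\,\},
\]
a genuine set of finite subgroups of $M_0$. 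Proposition~\ref{prop:equiv-model-structure} then produces the $\mathcal F$-model structure on $\cat{$\bm M$-SSet}=\cat{$\bm{E\mathcal M}$-$\bm G$-SSet}$, along with the further properties (simplicial, proper, etc.) recorded there.

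The remaining point is to identify the $\mathcal F$-weak equivalences and $\mathcal F$-fibrations with the classes in the statement, i.e.\ with those $f$ for which $f^{\Gamma_{H,\phi}}$ is a Kan-Quillen weak equivalence, respectively fibration, for \emph{every} universal $H\subset\mathcal M$ and \emph{every} $\phi\colon H\to G$. One inclusion is immediate because $\mathcal F$ consists of such graph subgroups. For the converse I would invoke the uniqueness clause of \cite[Lemma~1.2.8]{g-global}: any two universal embeddings of an abstract finite group differ by conjugation with an \emph{invertible} element of $\mathcal M$, i.e.\ an element of $\core\mathcal M$. Hence any graph subgroup $\Gamma_{H,\psi}$ is conjugate, through an element of $\core\mathcal M\times\{1\}$, to some member $\Gamma_{i_{H'},\phi}$ of $\mathcal F$ (with $H'\cong H$ and $\phi$ the correspondingly transported homomorphism); conjugating only affects the $\mathcal M$-components, so this is exactly the content of the cited lemma. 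Since acting by an invertible $g\in M_0$ gives a natural isomorphism of fixed-point functors $(\blank)^{gKg^{-1}}\cong(\blank)^K$, the map $f^{\Gamma_{H,\psi}}$ is isomorphic to $f^{\Gamma_{i_{H'},\phi}}$, so if the latter is a weak equivalence (respectively fibration) for all members of $\mathcal F$, the same holds for all graph subgroups. In particular the resulting model structure does not depend on the auxiliary choices of $\mathcal R$ and of the $i_H$.

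Uniqueness then requires no extra work, since a model structure is determined by its weak equivalences together with its fibrations; it follows from the uniqueness already asserted in Proposition~\ref{prop:equiv-model-structure}. The only mildly delicate ingredient is this passage from the proper class of all graph subgroups to the set $\mathcal F$ of representatives, which hinges on the conjugacy statement for universal embeddings and on the conjugating element being invertible (so that conjugate graph subgroups have naturally isomorphic fixed-point functors); everything else is bookkeeping once Proposition~\ref{prop:equiv-model-structure} is available.
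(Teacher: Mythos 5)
Your proposal is correct and takes the same route as the paper, which simply specializes Proposition~\ref{prop:equiv-model-structure} to the simplicial monoid $E\mathcal M\times G$ and the collection of graph subgroups $\Gamma_{H,\phi}$. The only superfluous part is the reduction to a set of representatives: unlike in Theorem~\ref{thm:thomason-schwede}, where $H$ ranges over abstract finite groups, here the universal subgroups are honest subgroups of the fixed monoid $\mathcal M$, so the collection of all graph subgroups $\Gamma_{H,\phi}\subset\mathcal M\times G$ is already a set and can be fed into Proposition~\ref{prop:equiv-model-structure} directly (your conjugation argument is nevertheless correct and harmless).
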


In particular, the weak equivalences of this model structure are precisely the $G$-global weak equivalences, and accordingly we refer to this as the \emph{$G$-global model structure}. Proposition~\ref{prop:equiv-model-structure} also provides us with explicit sets of generating (acyclic) cofibrations: writing $E\mathcal M\times_\phi G$ for $(E\mathcal M\times G)/\Gamma_{H,\phi}$ these are given by
\begin{align*}
\{E\mathcal M\times_\phi G\times(\del\Delta^n\hookrightarrow\Delta^n) &: n\ge 0,\text{ $H\subset\mathcal M$ universal},\phi\colon H\to G\}\\
\{E\mathcal M\times_\phi G\times(\Lambda^n_k\hookrightarrow\Delta^n) &: 0\le k\le n,\text{ $H\subset\mathcal M$ universal}, \phi\colon H\to G\}.
\end{align*}

Finally, we come to the relation between $G$-global and \emph{proper $G$-equi\-vari\-ant homotopy theory}, i.e.~$G$-equivariant homotopy theory with respect to the collection of all \emph{finite} subgroups $H\subset G$.

\begin{thm}\label{thm:G-global-vs-G-equiv-sset}
The functor $\triv_{E\mathcal M}\colon\cat{$\bm G$-SSet}_{\textup{proper}}\to\cat{$\bm{E\mathcal M}$-$\bm G$-SSet}_{\textup{$G$-global}}$ equipping a $G$-simplicial set with the trivial $E\mathcal M$-action is homotopical. The induced functor $\Ho(\triv_{E\mathcal M})\colon\Ho(\cat{$\bm G$-SSet})\to\Ho(\cat{$\bm{E\mathcal M}$-$\bm G$-SSet})$ on homotopy categories fits into a sequence of four adjoints suggestively denoted by
\begin{equation*}
\cat{L}(\blank/E\mathcal M)\dashv\Ho(\triv_{E\mathcal M})\dashv (\blank)^{\cat{R}E\mathcal M}\dashv\mathcal R.
\end{equation*}
Moreover, $(\blank)^{\cat{R}E\mathcal M}$ is a (Bousfield) localization at the $\mathcal E$-weak equivalences, where $\mathcal E$ denotes the collection of all subgroups $\Gamma_{H,\phi}\subset\mathcal M\times G$ with universal $H\subset\mathcal M$ for which $\phi$ is \emph{injective}.

In particular, $\triv_{E\mathcal M}:\cat{$\bm G$-SSet}_{\textup{proper}}\to\cat{$\bm{E\mathcal M}$-$\bm G$-SSet}_{\textup{$\mathcal E$-w.e.}}$ descends to an equivalence of homotopy categories.
\begin{proof}
See \cite[Theorem~1.2.92]{g-global}.
\end{proof}
\end{thm}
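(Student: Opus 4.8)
The plan is to reduce the statement to two inputs — the elementary fact that $\triv_{E\mathcal M}$ is homotopical and right Quillen, and the substantial fact that it is a Quillen equivalence onto the $\mathcal E$-model structure — and then to assemble the four-fold chain of adjoints from these together with the comparison between the $\mathcal E$- and $G$-global model structures on $\cat{$\bm{E\mathcal M}$-$\bm G$-SSet}$. First I would record the point-set adjunctions $(\blank)/E\mathcal M\dashv\triv_{E\mathcal M}\dashv(\blank)^{E\mathcal M}$ ($E\mathcal M$-orbits on the left, $E\mathcal M$-fixed points on the right) and compute $(\triv_{E\mathcal M}X)^{\Gamma_{H,\phi}}\cong X^{\phi(H)}$: the $\mathcal M$-coordinate of a graph subgroup acts trivially, while the $G$-coordinate $\phi(H)$ is a finite subgroup of $G$, and every finite subgroup of $G$ arises this way for a suitable universal $H$. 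Hence $\triv_{E\mathcal M}$ creates proper $G$-equivalences, fibrations and acyclic fibrations, so it is homotopical and right Quillen both into $\cat{$\bm{E\mathcal M}$-$\bm G$-SSet}_{\textup{$G$-global}}$ and, using the Proposition to construct the $\mathcal E$-model structure, into $\cat{$\bm{E\mathcal M}$-$\bm G$-SSet}_{\textup{$\mathcal E$-w.e.}}$. Deriving $(\blank)/E\mathcal M\dashv\triv_{E\mathcal M}$ over the $G$-global structure gives the first two adjoints $\cat{L}(\blank/E\mathcal M)\dashv\Ho(\triv_{E\mathcal M})$. Finally, since homotopy pushouts in both categories are detected on fixed points by the Proposition and the fixed points of $\triv_{E\mathcal M}$ are the $X^{\phi(H)}$ with $\phi(H)$ finite, $\Ho(\triv_{E\mathcal M})$ preserves homotopy pushouts and filtered homotopy colimits, hence all homotopy colimits; being right Quillen it also preserves homotopy limits.

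The technical heart is that the Quillen adjunction $(\blank)/E\mathcal M\dashv\triv_{E\mathcal M}$ onto the $\mathcal E$-model structure is a Quillen equivalence. For the derived counit $\cat{L}(\blank/E\mathcal M)(\triv_{E\mathcal M}Y)\to Y$ I would use that $\triv_G(E\mathcal M)\times Y$ is a cofibrant replacement of $\triv_{E\mathcal M}Y$ whose $E\mathcal M$-orbits recover $Y$ up to equivalence, the only inputs being that $E\mathcal M\to *$ is a $G$-global equivalence of $\mathcal M$-simplicial sets and that the homotopy $E\mathcal M$-orbits of $E\mathcal M$ are contractible. The harder half is that the derived unit $X\to\triv_{E\mathcal M}\bigl((QX)^{E\mathcal M}\bigr)$, with $QX$ a $G$-global fibrant replacement, is an $\mathcal E$-weak equivalence for $\mathcal E$-cofibrant $X$. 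I would prove this by induction over the $\mathcal E$-generating cofibrations $E\mathcal M\times_\psi G\times(\del\Delta^n\hookrightarrow\Delta^n)$ with $\psi$ injective, using the good behaviour of $\mathcal E$-weak equivalences under pushouts and filtered colimits from the Proposition; cell by cell this reduces to checking that $(X^{E\mathcal M})^{\psi(H)}\to X^{\Gamma_{H,\psi}}$ is a weak equivalence, which is the precise point where the universality of $\omega$ enters — $(E\mathcal M)^{\Gamma_{H,\psi}}$ is contractible when $\psi$ is injective and empty otherwise. This Quillen equivalence already gives the final assertion that $\triv_{E\mathcal M}$ descends to an equivalence $\Ho(\cat{$\bm G$-SSet}_{\textup{proper}})\simeq\Ho(\cat{$\bm{E\mathcal M}$-$\bm G$-SSet}_{\textup{$\mathcal E$-w.e.}})$.

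To produce the remaining adjoints I would compare the two model structures on $\cat{$\bm{E\mathcal M}$-$\bm G$-SSet}$: every $G$-global equivalence is an $\mathcal E$-weak equivalence, and the identity functor from the $\mathcal E$- to the $G$-global structure is left Quillen (its generating acyclic cofibrations, indexed by injective graph subgroups, are $G$-global acyclic cofibrations), so one obtains the Bousfield localization $L_{\mathcal E}$ of $\Ho(\cat{$\bm{E\mathcal M}$-$\bm G$-SSet}_{\textup{$G$-global}})$ at the $\mathcal E$-weak equivalences. Under the equivalence of the previous paragraph, $L_{\mathcal E}$ is identified with $\cat{L}(\blank/E\mathcal M)$ and $\Ho(\triv_{E\mathcal M})$ with a fully faithful functor onto the subcategory it reflects onto (the $\mathcal E$-local objects, recognizable because $\triv_{E\mathcal M}$ of a proper-fibrant object is both $G$-global fibrant and $\mathcal E$-fibrant). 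Because $\Ho(\triv_{E\mathcal M})$ also preserves homotopy colimits, this subcategory is closed under colimits, so $\Ho(\triv_{E\mathcal M})$ is again a left adjoint; the adjoint functor theorem in presentable homotopy categories then yields its right adjoint $(\blank)^{\cat{R}E\mathcal M}$, a derived $E\mathcal M$-fixed-point functor. Tracing through the identifications shows that $(\blank)^{\cat{R}E\mathcal M}$ again presents the localization at the $\mathcal E$-weak equivalences — in particular it preserves colimits — which furnishes the fourth adjoint $\mathcal R$.

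The main obstacle is the Quillen equivalence onto the $\mathcal E$-model structure, and within it the verification that the derived unit is an $\mathcal E$-weak equivalence: this is a genuine change-of-universe argument, hinging on the interplay between the fixed points $(E\mathcal M)^{\Gamma_{H,\psi}}$ and the injectivity of $\psi$, and it is the one step that really uses the structure of $\mathcal M$ as the universal finite group. A secondary point requiring care is the bookkeeping in the third paragraph: one must check that the localization at the $\mathcal E$-weak equivalences is simultaneously reflective and coreflective inside $\Ho(\cat{$\bm{E\mathcal M}$-$\bm G$-SSet}_{\textup{$G$-global}})$, which is exactly what makes the chain of adjoints have length four rather than three.
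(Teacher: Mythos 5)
First, a caveat: this paper does not actually prove the theorem---its proof is a citation of \cite[Theorem~1.2.92]{g-global}---so your proposal is being compared against the argument there. Your overall architecture is essentially the right one: $(\triv_{E\mathcal M}X)^{\Gamma_{H,\phi}}=X^{\phi(H)}$ with $\phi(H)$ ranging over all finite subgroups of $G$, so $\triv_{E\mathcal M}$ creates weak equivalences and fibrations; the $\mathcal E$-model structure mediates between the proper $G$-equivariant and the $G$-global structures; the Quillen equivalence onto the $\mathcal E$-structure gives the final assertion; and the four adjoints come from the $\mathcal E$-localization being simultaneously reflective and coreflective. However, several of the key steps are wrong or unjustified as written.

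(1) The computation you isolate as the heart of the matter---``$(E\mathcal M)^{\Gamma_{H,\psi}}$ is contractible when $\psi$ is injective and empty otherwise''---is false: with the left-multiplication $E\mathcal M$-action and trivial $G$-action, $(E\mathcal M)^{\Gamma_{H,\psi}}=E\bigl(\{u\in\mathcal M:\im u\subset\omega^H\}\bigr)$ is a nonempty indiscrete groupoid, hence contractible, for \emph{every} universal $H$ and every $\psi$. Where injectivity and completeness of the universe really enter is in comparing the cell $E\mathcal M\times_\psi G$ ($\psi$ injective, $K=\psi(H)$) with $\triv_{E\mathcal M}(G/K)$: a double-coset analysis exhibits $(E\mathcal M\times_\psi G)^{\Gamma_{H',\phi}}$ as a disjoint union over $(G/K)^{\phi(H')}$ of indiscrete categories on sets of twisted equivariant injections $\{u:h'u=u\alpha_g(h')\}$ with $\alpha_g=\psi^{-1}c_{g^{-1}}\phi$, and completeness of $\omega$ as an $H'$-set universe is what makes each of these nonempty. (2) You conflate the two adjunctions: the derived unit of $(\blank)/E\mathcal M\dashv\triv_{E\mathcal M}$ is $A\to\triv_{E\mathcal M}(A/E\mathcal M)$ for $\mathcal E$-cofibrant $A$ (orbits, cell induction over the injective cells), whereas the map $\triv_{E\mathcal M}\bigl((QX)^{E\mathcal M}\bigr)\to X$ involving derived fixed points is the counit of $\Ho(\triv_{E\mathcal M})\dashv(\blank)^{\cat RE\mathcal M}$ and belongs to a different step, namely identifying $(\blank)^{\cat RE\mathcal M}$ with the $\mathcal E$-localization. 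Relatedly, in your third paragraph you identify the localization $L_{\mathcal E}$ with the \emph{left} adjoint $\cat L(\blank/E\mathcal M)$ and then also claim $(\blank)^{\cat RE\mathcal M}$ presents the same localization; since localizations at a fixed class of maps are unique, this would force the left and right adjoints of $\Ho(\triv_{E\mathcal M})$ to coincide, which is not claimed and not true---only the right adjoint is the $\mathcal E$-localization. (3) $\triv_G(E\mathcal M)\times Y$ is not visibly $\mathcal E$-cofibrant: $E\mathcal M\times G/K$ is $G$-globally weakly equivalent to, but not isomorphic to, $E\mathcal M\times_\psi G$, so using it as a literal cofibrant replacement needs an argument. (4) ``The adjoint functor theorem in presentable homotopy categories'' is not available here: $\Ho$ of a combinatorial model category is not presentable, so the extra adjoints must either be constructed explicitly (as derived functors of the point-set adjoints $(\blank)/E\mathcal M\dashv\triv_{E\mathcal M}\dashv(\blank)^{E\mathcal M}$ for suitable auxiliary model structures, cf.\ the remark following Proposition~\ref{prop:G-global-vs-G-equiv-cat}) or obtained $\infty$-categorically, as the cited reference does.
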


\begin{rk}
    In fact, \emph{loc.~cit.}\ establishes the above result on the level of $\infty$-categorical localizations. For simplicity, we will stick to the formulation in terms of classical homotopy categories in the present paper.
\end{rk}

\subsection{The Thomason model structure} We close this section by recalling Thomason's model structure on $\cat{Cat}$ that models the ordinary homotopy theory of spaces. While the usual nerve functor $\nerve$ induces an equivalence of homotopy categories by \cite[Corollaire~3.3.1]{nerve-homotopy-category}, it can't be part of a Quillen equivalence to the Kan-Quillen model structure as its left adjoint $\h$ (sending a simplicial set to its \emph{homotopy category}) is not homotopically well-behaved. Thomason's crucial insight was that we can avoid this issue by using Kan's $\Sd\dashv\Ex$-adjunction \cite[§7]{sd-ex} to replace the nerve by a weakly equivalent functor:

\begin{thm}[Thomason]\label{thm:thomason}
There is a unique model structure on $\cat{Cat}$ in which a functor $f\colon C\to D$ is a weak equivalence if and only if $\nerve(f)$ is a weak homotopy equivalence of simplicial sets and a fibration if and only if $\Ex^2\nerve(f)$ is a Kan fibration. This model structure is combinatorial with generating cofibrations
\begin{equation*}
\{\h\Sd^2\del\Delta^n\hookrightarrow\h\Sd^2\Delta^n\colon n\ge 0\}
\end{equation*}
and generating acyclic cofibrations
\begin{equation*}
\{\h\Sd^2\Lambda_k^n\hookrightarrow\h\Sd^2\Delta^n\colon 0\le k\le n\}.
\end{equation*}
Moreover, with respect to this model structure the adjunction
\begin{equation}\label{eq:thomason-adjunction}
\h\Sd^2\colon\cat{SSet}_{\textup{Kan-Quillen}}\rightleftarrows\cat{Cat} :\Ex^2\nerve
\end{equation}
is a Quillen equivalence.
\begin{proof}
The existence of the model structure together with the above choices of generating (acyclic) cofibrations is \cite[Theorem 4.9]{thomason-cat}; as $\cat{Cat}$ is locally presentable, this is then a combinatorial model structure.

It is obvious that $(\ref{eq:thomason-adjunction})$ is a Quillen adjunction. Moreover, the right adjoint is homotopical as $\Ex$ is weakly equivalent to the identity functor \cite[Lemma~7.4]{sd-ex}, while the left adjoint is so by Ken Brown's Lemma. Thus, for $(\ref{eq:thomason-adjunction})$ to be a Quillen equivalence one has to show that the ordinary unit and counit are weak equivalences, for which Thomason refers to Fritsch and Latch \cite[Example~4.12-(v)]{fritsch-latch}.
\end{proof}
\end{thm}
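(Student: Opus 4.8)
The plan is to obtain the model structure by \emph{right transfer} of the Kan--Quillen model structure along the adjunction $(\ref{eq:thomason-adjunction})$, and then to upgrade the resulting Quillen adjunction to a Quillen equivalence. Concretely, I would declare $f\colon C\to D$ to be a fibration (resp.\ weak equivalence) precisely when $\Ex^2\nerve f$ is a Kan fibration (resp.\ weak homotopy equivalence), take as candidate generating cofibrations and acyclic cofibrations the images under $\h\Sd^2$ of the boundary inclusions $\del\Delta^n\hookrightarrow\Delta^n$ and horn inclusions $\Lambda^n_k\hookrightarrow\Delta^n$, and verify the hypotheses of a transfer theorem for cofibrantly generated model categories. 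The only nonformal hypothesis is the \emph{acyclicity condition}: $\nerve$ must send every transfinite composition of pushouts of the maps $\h\Sd^2(\Lambda^n_k\hookrightarrow\Delta^n)$ to a weak homotopy equivalence. Once this is in place, cofibrant generation is automatic, and since $\cat{Cat}$ is locally presentable the model structure is in fact combinatorial.

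I expect this acyclicity condition to be the main obstacle, and it is exactly the point at which Thomason's substantial work enters \cite[Theorem~4.9]{thomason-cat}. The difficulty is that pushouts in $\cat{Cat}$ are badly behaved---a pushout of categories need not compute a homotopy pushout of classifying spaces---so one cannot simply apply $\nerve$ to a cell attachment and argue. Thomason's remedy is to single out a class of well-behaved cofibrations of categories (\emph{Dwyer maps}), to show that $\h\Sd^2$ carries every monomorphism of simplicial sets into this class, to prove a gluing lemma guaranteeing that cobase changes along Dwyer maps are homotopically meaningful, and to identify $\nerve\h\Sd^2(\Lambda^n_k\hookrightarrow\Delta^n)$ up to weak equivalence with an anodyne extension; assembling these ingredients yields the acyclicity condition. (The original pushout analysis in \cite{thomason-cat} contained an inaccuracy that was repaired later in the literature; the statement recalled in Theorem~\ref{thm:thomason} is the corrected one.)

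Granting the model structure, that $(\ref{eq:thomason-adjunction})$ is a Quillen adjunction is then formal: by construction $\Ex^2\nerve$ preserves fibrations and acyclic fibrations, so it is right Quillen; and since $X\to\Ex X$ is always a natural weak equivalence \cite[Lemma~7.4]{sd-ex}, it is moreover homotopical. Its left adjoint $\h\Sd^2$ consequently preserves all weak equivalences by Ken Brown's lemma, every simplicial set being cofibrant in the Kan--Quillen structure.

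Finally, since both functors in $(\ref{eq:thomason-adjunction})$ are homotopical, the adjunction is a Quillen equivalence as soon as the ordinary unit $X\to\Ex^2\nerve\,\h\Sd^2 X$ and counit $\h\Sd^2\,\Ex^2\nerve\, C\to C$ are weak equivalences for all $X\in\cat{SSet}$ and $C\in\cat{Cat}$. Using the natural weak equivalences linking $\Sd$ and $\Ex$ to the respective identities, this bookkeeping reduces to the assertion that $\h\Sd^2$ and $\nerve$ are mutually inverse up to natural weak equivalence---equivalently, that $\nerve$ admits a natural homotopy inverse realized through a subdivided variant of its left adjoint $\h$. Since $\h$ alone is not a homotopy inverse of $\nerve$, this last point is genuinely nontrivial; it is the content of the explicit construction of Fritsch and Latch \cite[Example~4.12-(v)]{fritsch-latch}, which ultimately rests on Quillen's Theorem~A. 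Invoking it completes the proof.
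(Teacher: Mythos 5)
Your proposal is correct and follows essentially the same route as the paper: both defer the existence of the model structure and the identification of the generating (acyclic) cofibrations to Thomason's Theorem~4.9 (your transfer-theorem framing is just a modern gloss on what that theorem does), deduce combinatoriality from local presentability of $\cat{Cat}$, observe that the Quillen adjunction is formal with homotopical right adjoint via $\id\Rightarrow\Ex$ and homotopical left adjoint via Ken Brown's lemma, and reduce the Quillen equivalence to the unit and counit being weak equivalences, which is the Fritsch--Latch input. The extra exposition on Dwyer maps and the known erratum is accurate in spirit but not needed beyond the citation.
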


Thomason's proof of the above theorem crucially relies on a careful analysis of the (generating) cofibrations. As we will need some of their properties later, we briefly recall them here for easy reference.

\begin{defi}\label{defi:ordinary-dwyer}
A sieve $i\colon C\to D$ is called a \emph{Dwyer map} if it can be factored as $i=jf$ such that the following holds:
\begin{enumerate}
\item $j$ is a cosieve.
\item $f$ admits a right adjoint.
\end{enumerate}
\end{defi}

\begin{prop}\label{prop:h-sd-cofib}
Let $i\colon K\hookrightarrow L$ be a cofibration of simplicial sets. Then $\h\Sd^2(i)$ is a Dwyer map.
\begin{proof}
See \cite[Proposition~4.2]{thomason-cat}.
\end{proof}
\end{prop}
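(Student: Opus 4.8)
The plan is to run Thomason's strategy: reduce to the generating cofibrations, then combine an explicit computation with closure properties of a suitable class of Dwyer maps.

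First I would use that every monomorphism of simplicial sets is a transfinite composite of pushouts of coproducts of the boundary inclusions $\del\Delta^n\hookrightarrow\Delta^n$ ($n\ge0$). Since $\h$ and $\Sd$ are left adjoints, the composite $\h\Sd^2$ preserves all colimits, so $\h\Sd^2(i)$ is obtained from the functors $\h\Sd^2(\del\Delta^n\hookrightarrow\Delta^n)$ by the same colimit operations. It therefore suffices to (a) prove the claim for these generators, and (b) show that the class of Dwyer maps — or rather a suitable refinement of it — is closed under coproducts, pushouts, transfinite composition, and retracts.

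For (a): the first subdivision identifies $\Sd\Delta^n$ with the nerve of the poset $\mathcal P$ of nonempty subsets of $\{0,\dots,n\}$, ordered by inclusion, and $\Sd\del\Delta^n$ with the nerve of the full subposet $\mathcal P\setminus\{[n]\}$. Applying $\h\Sd$ once more, $\h\Sd^2\Delta^n$ becomes the poset $Q$ of nondegenerate chains in $\mathcal P$ (ordered by inclusion of chains) and $\h\Sd^2\del\Delta^n$ the full subposet $Q_0\subset Q$ of chains not containing $[n]$. This inclusion is a sieve, since a subchain of a chain avoiding $[n]$ again avoids $[n]$; and it admits the Dwyer factorization $Q_0\hookrightarrow Q\setminus\{\{[n]\}\}\hookrightarrow Q$, where the first map has the monotone right adjoint $c\mapsto c\setminus\{[n]\}$ and the second is a cosieve. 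The point worth stressing is that the operation ``delete $[n]$ from a chain''---which makes the right adjoint work---has no analogue after a single subdivision, where the corresponding sieve $\mathcal P\setminus\{[n]\}\hookrightarrow\mathcal P$ fails to be a Dwyer map because $[n]$ has no largest proper subset; this is exactly why $\Sd^2$ rather than $\Sd$ appears. More conceptually, $\h\Sd$ turns any sub-simplicial-set inclusion into a sieve inclusion of categories, and $\h\Sd$ of a sieve of categories is a Dwyer map, so the two subdivisions play the distinct roles of producing the sieve and of producing its Dwyer factorization.

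The main obstacle is (b), and specifically closure under pushouts: it is a well-known subtlety---in fact the source of a gap in Thomason's original argument, later repaired by Cisinski---that the class of Dwyer maps as literally defined is not closed under pushout in $\cat{Cat}$. The fix is to carry out the cell induction in (a)--(b) with a slightly stronger class, Cisinski's \emph{pseudo-Dwyer} maps, which (i) is closed under coproducts, pushouts, transfinite composition, and retracts, (ii) is contained in the class of Dwyer maps, and (iii) still contains each $\h\Sd^2(\del\Delta^n\hookrightarrow\Delta^n)$, by a mild strengthening of the computation in (a). Establishing (i)--(iii) is the technical heart of the proof; the remainder is formal manipulation of colimits.
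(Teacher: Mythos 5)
Your high-level strategy is a genuine alternative to what the paper does: the paper simply invokes Thomason's Proposition~4.2 of \cite{thomason-cat}, whose proof is a \emph{direct} construction of the cosieve and the coreflection inside $\h\Sd^2L$ for an arbitrary inclusion $K\hookrightarrow L$, with no cell induction. Your part (a) is correct as far as it goes: the identification of $\h\Sd^2\Delta^n$ with the poset $Q$ of nonempty chains in $\mathcal P$, the factorization $Q_0\hookrightarrow Q\setminus\{\{[n]\}\}\hookrightarrow Q$ with right adjoint $c\mapsto c\setminus\{[n]\}$, and the explanation of why a single subdivision does not suffice are all right.

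The gap is in (b), which you yourself flag as the technical heart. Two of your factual claims there are wrong, and together they break the conclusion. First, Dwyer maps \emph{are} closed under cobase change: this is read off directly from the explicit pushout description recalled in Construction~\ref{constr:pushout-Dwyer} of this paper, where $j\colon C\to D$ is a sieve, $C\cup(V\cap X)$ is a cosieve in $D$, and the formula $(\ref{eq:hom-pushouts})$ exhibits $cr$ as the required right adjoint. The closure property that actually fails --- Cisinski's counterexample, and the real gap in Thomason's paper --- is closure under \emph{retracts}. Second, you have the containment backwards: pseudo-Dwyer maps form a \emph{larger} class than Dwyer maps (one keeps the sieve/cosieve conditions but weakens the adjunction to a mere deformation retraction), not a subclass. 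Consequently, running your induction with pseudo-Dwyer maps would only show that $\h\Sd^2(i)$ is pseudo-Dwyer, which is strictly weaker than the statement you are supposed to prove; as written, step (b) does not deliver the proposition. The repair is straightforward: a monomorphism of simplicial sets is an honest relative cell complex (skeletal filtration), so no retracts occur in your decomposition at all, and you can run the induction entirely within the class of genuine Dwyer maps, using their closure under coproducts, cobase change, and transfinite composition (the last being a short argument with the minimal cosieve generated by the image, using that all the intermediate inclusions are full). With that correction your argument goes through and recovers the statement, though it is longer than Thomason's direct proof.
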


Dwyer maps are extremely useful since pushouts along them admit a very explicit description, which we will recall later in Construction~\ref{constr:pushout-Dwyer}. For now we only record one important consequence of this:

\begin{prop}\label{prop:pushout-dwyer-nerve}
Let
\begin{equation*}
\begin{tikzcd}
A\arrow[d] \arrow[r, "i"] & B\arrow[d]\\
C\arrow[r] & D
\end{tikzcd}
\end{equation*}
be a pushout in $\cat{Cat}$ such that $i$ is a Dwyer map. Then the induced map
\begin{equation*}
\nerve B\amalg_{\nerve A}\nerve C\to\nerve D
\end{equation*}
is a weak homotopy equivalence.
\begin{proof}
This is \cite[Proposition~4.3]{thomason-cat}.
\end{proof}
\end{prop}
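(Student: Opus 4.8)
The plan is to reduce the statement to the explicit description of pushouts along Dwyer maps (Construction \ref{constr:pushout-Dwyer}) and then verify the weak equivalence by hand, using that the nerve sends this particular kind of pushout to a homotopy pushout of simplicial sets. First I would recall that, since $i\colon A\to B$ is a Dwyer map, it factors as $A\xrightarrow{f}A'\xrightarrow{j}B$ with $f$ admitting a right adjoint $r\colon A'\to A$ and $j$ a cosieve; dually, $A$ is a sieve in $B$, so $A'$ is a sieve in $B$ as well, and $A'$ is simultaneously a cosieve, meaning $B$ decomposes as the disjoint union (on objects) of $A'$ and its complement with all morphisms between the two pieces going from $A'$ to the complement. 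The key structural input is that in the pushout square the map $C\to D$ is again a Dwyer map with the analogous factorization $C\xrightarrow{f'}C'\xrightarrow{j'}D$, where $C' = C\amalg_A A'$ is computed using the adjunction $f\dashv r$ (this colimit is well-behaved precisely because $f$ has a right adjoint, so $A'$ is built from $A$ by freely adjoining the cosieve part), and $D$ is obtained from $C'$ by gluing in the complement of $A'$ in $B$ along the same attaching data.

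Granting this description, the main step is to show that the comparison map $\nerve B\amalg_{\nerve A}\nerve C\to\nerve D$ is a weak homotopy equivalence. I would argue in two stages matching the factorization. For the cosieve stage: if $j\colon A'\hookrightarrow B$ is a cosieve, then $\nerve A'\hookrightarrow\nerve B$ is the inclusion of a simplicial subset which is moreover a \emph{cofibration} in the sense that the decomposition of $B$ into $A'$ and its complement lets one identify $\nerve B$ with the pushout of $\nerve A'$ against the nerves of the "cylinder-like" categories recording morphisms out of $A'$; concretely the nerve sends the pushout $C'\amalg_{A'}B\cong D$ (for the cosieve) to a genuine pushout of simplicial sets along the cofibration $\nerve A'\hookrightarrow\nerve B$, hence a homotopy pushout. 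For the adjunction stage: since $f\colon A\to A'$ admits a right adjoint, $\nerve f$ is a simplicial homotopy equivalence (an adjunction yields a natural transformation $\id\Rightarrow rf$ and $fr\Rightarrow\id$, which nerve takes to a simplicial homotopy), and likewise $\nerve f'\colon\nerve C\to\nerve C'$ is a homotopy equivalence because $f'$ inherits a right adjoint from the pushout construction. Combining: $\nerve B\amalg_{\nerve A}\nerve C\to\nerve B\amalg_{\nerve A'}\nerve C'$ is a weak equivalence because we are pushing out the homotopy equivalences $\nerve A\to\nerve A'$ and $\nerve C\to\nerve C'$ along the cofibration $\nerve A\hookrightarrow\nerve B$ (using left properness of $\cat{SSet}$ and the gluing lemma), and then $\nerve B\amalg_{\nerve A'}\nerve C'\xrightarrow{\sim}\nerve D$ by the cosieve stage.

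The step I expect to be the main obstacle is establishing the explicit form of the pushout $D$ — i.e.\ that $C\to D$ is again Dwyer with $C'=C\amalg_A A'$ computed via the right adjoint and $D$ glued from $C'$ and the complement of $A'$ in $B$ exactly as in $B$. This is where one must genuinely use both defining properties of a Dwyer map: the cosieve/complement decomposition guarantees that attaching the complement is a "free" operation preserved by pushout, and the existence of the right adjoint $r$ is what makes the colimit $C\amalg_A A'$ concretely describable (without it, colimits in $\cat{Cat}$ are opaque). Once this combinatorial description is in place, the homotopy-theoretic conclusion follows formally from left properness of $\cat{SSet}$, the gluing lemma, and the fact that the nerve turns adjunctions into homotopy equivalences and cosieve inclusions into cofibrations along which nerve preserves pushouts. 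Since the detailed verification of the pushout description is somewhat involved, I would in practice defer it to Construction \ref{constr:pushout-Dwyer} and merely cite the relevant properties here.
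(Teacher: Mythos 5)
The paper itself offers no argument here—it simply cites Thomason's Proposition~4.3—and your two-stage plan (handle the adjoint part of the factorization by the gluing lemma, and show the nerve preserves the cosieve part of the pushout on the nose) is in essence Thomason's original proof, so the architecture is the right one. However, the justification of your ``cosieve stage'' contains two concrete errors. First, $A'$ is \emph{not} a sieve in $B$ in general: only $A$ is a sieve, while $A'$ is a cosieve, and a subcategory that were simultaneously a sieve and a cosieve would admit no morphisms to or from its complement, so $B$ would split as a disjoint union; relatedly, for a cosieve the morphisms between the two pieces go \emph{into} $A'$, not out of it. Second, and more seriously, the nerve does \emph{not} send pushouts along cosieves to pushouts of simplicial sets: take $B=[2]$, let $A'$ be the cosieve on $\{1,2\}$, and collapse it to a point; the pushout in $\cat{Cat}$ is $[1]$, with nerve $\Delta^1$, whereas $\Delta^2\amalg_{\Delta^{\{1,2\}}}\Delta^0$ has two distinct non-degenerate edges. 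So the principle you invoke would fail in general.

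The identification you want is nevertheless true in the Dwyer situation, but for a reason that uses the sieve $A$ and the cosieve $A'$ \emph{together}. Writing $\bar V\subset B$ for the full subcategory on $\Ob B\setminus\Ob A$, every chain of composable morphisms in $B$ lies entirely in $A'$ or entirely in $\bar V$: if some object of the chain lies in $A$, the sieve condition forces everything before it into $A\subset A'$ and the cosieve condition forces everything after it into $A'$. Hence $\nerve B=\nerve A'\amalg_{\nerve(A'\cap\bar V)}\nerve\bar V$ is a genuine union of simplicial subsets, and the explicit description of $D$ from Construction~\ref{constr:pushout-Dwyer} gives likewise $\nerve D=\nerve C'\amalg_{\nerve(A'\cap\bar V)}\nerve\bar V$ with $C'=C\amalg_AA'$ the full subcategory of $D$ on $\Ob C$ and the objects of $A'$ outside $A$. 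Pasting these two pushouts then yields the isomorphism $\nerve D\cong\nerve C'\amalg_{\nerve A'}\nerve B$ that your cosieve stage asserts; the gluing happens along $\nerve(A'\cap\bar V)\hookrightarrow\nerve\bar V$, not along $\nerve A'$ itself. With that step repaired, your adjunction stage and the gluing-lemma comparison of $\nerve B\amalg_{\nerve A}\nerve C$ with $\nerve B\amalg_{\nerve A'}\nerve C'$ go through as you describe (note that $\nerve C\to\nerve C'$ being a homotopy equivalence again relies on the explicit pushout construction to produce the right adjoint of $C\to C'$).
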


Finally, let us state Schwede's global refinement of the Thomason model structure, which we will later generalize to the $G$-global setting:

\begin{thm}[Schwede]\label{thm:thomason-schwede}
There is a unique model structure on $\cat{Cat}$ in which a functor $f\colon C\to D$ is a weak equivalence or fibration if and only if $\Fun(BH,f)$ is a weak equivalence or fibration, respectively, in the Thomason model structure for every finite group $H$.

We call this the \emph{global model structure} on $\cat{Cat}$. It is proper and combinatorial with generating cofibrations
\begin{equation*}
I=\{BH\times\h\Sd^2\del\Delta^n\hookrightarrow BH\times\h\Sd^2\Delta^n : n\ge 0, \text{$H$ a finite group}\}
\end{equation*}
and generating acyclic cofibrations
\begin{equation*}
J=\{BH\times\h\Sd^2\Lambda_k^n\hookrightarrow BH\times\h\Sd^2\Delta^n : 0\le k\le n, \text{$H$ a finite group}\}.
\end{equation*}
\end{thm}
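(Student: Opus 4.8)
The plan is to obtain the global model structure on $\cat{Cat}$ as a transfer along the adjunction
\begin{equation*}
\coprod_{H\text{ finite}} BH\times(\blank)\colon\prod_{H\text{ finite}}\cat{Cat}\rightleftarrows\cat{Cat}\colon\bigl(\Fun(BH,\blank)\bigr)_H
\end{equation*}
from the product model structure on the right-hand side where each factor carries the Thomason model structure of Theorem~\ref{thm:thomason}; equivalently, one applies Kan's transfer recognition principle directly to the proposed sets $I$ and $J$. Concretely, I would first note that $\cat{Cat}$ is locally presentable, so $I$ and $J$ permit the small object argument and the only thing to check is the usual acyclicity condition: every relative $J$-cell complex is a weak equivalence in the would-be model structure, i.e.\ becomes a weak homotopy equivalence after applying $\Fun(BH,\blank)$ and $\nerve$ for each finite $H$. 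Given this, standard transfer formalism produces the model structure with the stated generating (acyclic) cofibrations, identifies weak equivalences and fibrations as claimed, and gives combinatoriality for free; uniqueness is automatic once the classes of weak equivalences and fibrations are pinned down.

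The key steps, in order, are as follows. First, fix a finite group $H$ and analyze $\Fun(BH,\blank)$ applied to a single pushout along a generating acyclic cofibration $BH'\times\h\Sd^2(\Lambda^n_k\hookrightarrow\Delta^n)$: since $\h\Sd^2(\Lambda^n_k\hookrightarrow\Delta^n)$ is a Dwyer map by Proposition~\ref{prop:h-sd-cofib}, and since products with a fixed category and the functor $\Fun(BH,\blank)$ both preserve Dwyer maps (Dwyer maps are characterized by the sieve/cosieve/adjoint factorization of Definition~\ref{defi:ordinary-dwyer}, all of which is stable under these operations — this is exactly the point where I expect to need a small lemma, perhaps already isolated in the paper's discussion of Dwyer maps via Construction~\ref{constr:pushout-Dwyer}), the map $BH'\times\h\Sd^2(\Lambda^n_k\hookrightarrow\Delta^n)$ is again a Dwyer map, and so is its pushout. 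Second, invoke Proposition~\ref{prop:pushout-dwyer-nerve}: applying $\nerve$ to such a pushout of a Dwyer map yields a homotopy pushout of simplicial sets in which one leg is $\nerve$ of the Dwyer map $BH'\times\h\Sd^2(\Lambda^n_k\hookrightarrow\Delta^n)$. Third, observe that $\nerve(BH'\times\h\Sd^2(\Lambda^n_k\hookrightarrow\Delta^n))\cong BH'\times\nerve\h\Sd^2(\Lambda^n_k\hookrightarrow\Delta^n)$ is a weak homotopy equivalence — because $\h\Sd^2(\Lambda^n_k\hookrightarrow\Delta^n)$ is a Thomason weak equivalence (it is a generating acyclic cofibration there), hence so is its product with $BH'$ since the Thomason structure is the transfer of Kan–Quillen and $BH'\times\blank$ is left Quillen there, or more directly since $\nerve(BH')$ is a Kan complex and smashing/multiplying by it preserves weak equivalences. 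Fourth, conclude via left properness of Kan–Quillen (pushout of a weak equivalence along a cofibration) that each single pushout along a $J$-cell is a $\Fun(BH,\blank)$-and-$\nerve$ weak equivalence for every finite $H$; then a transfinite composition argument using that Kan–Quillen weak equivalences are closed under transfinite composition of cofibrations (or that $\nerve$ sends the relevant colimits to the expected colimits of simplicial sets) upgrades this to all relative $J$-cell complexes. Finally, assemble: the transfer theorem gives the model structure, properness follows from properness of Kan–Quillen together with the Dwyer-map description of pushouts and the fact that $\Fun(BH,\blank)$ detects everything, and the explicit generating sets are $I$ and $J$ by construction.

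The main obstacle I anticipate is the interaction between $\Fun(BH,\blank)$ and pushouts of Dwyer maps: unlike $BH\times\blank$, the functor $\Fun(BH,\blank)=(\blank)^{BH}$ is a right adjoint, so it does not obviously commute with the pushouts defining $J$-cell complexes, and one must argue that nevertheless $\Fun(BH,P)$ for a pushout $P$ along $BH'\times\h\Sd^2(\Lambda^n_k\hookrightarrow\Delta^n)$ still has nerve computing the right homotopy type. The cleanest route is probably to avoid computing $\Fun(BH,P)$ directly and instead work one finite group $H$ at a time using the equivalent formulation of the model structure as a transfer from the product $\prod_H\cat{Cat}$: there the left adjoint is $\coprod_H BH\times\blank$, which preserves Dwyer maps and whose pushouts are genuine pushouts in $\cat{Cat}$, so Propositions~\ref{prop:h-sd-cofib} and~\ref{prop:pushout-dwyer-nerve} apply on the nose, and the acyclicity condition reduces exactly to the four steps above without ever differentiating a limit. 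Once that is set up, everything else is bookkeeping with the transfer theorem and the already-cited properties of the Thomason model structure.
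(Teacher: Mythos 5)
Your overall strategy (transfer along $\coprod_H BH\times\blank\dashv(\Fun(BH,\blank))_H$ from the product of Thomason model structures, with the acyclicity condition handled via Dwyer maps) is the right one and matches how Schwede actually proves this; note that the paper itself does not reprove the theorem but simply cites \cite[Theorem~1.12]{schwede-cat}. However, there is one genuine gap, and it sits exactly at the point you flag as "the main obstacle" without actually resolving it. Your steps 1--4 establish that a pushout $k\colon C\to D$ along $BH'\times\h\Sd^2(\Lambda^n_k\hookrightarrow\Delta^n)$ is an \emph{underlying} weak homotopy equivalence, i.e.\ they verify the acyclicity condition only for the component $H=1$ of the right adjoint. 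For a nontrivial finite group $H$ you must show that $\Fun(BH,k)$ is a weak homotopy equivalence, and for that you need to control $\Fun(BH,D)$ where $D$ is the pushout. Your proposed workaround --- transferring from $\prod_H\cat{Cat}$ so that "pushouts are genuine pushouts in $\cat{Cat}$" --- does not sidestep this: the acyclicity condition for that transfer is still that the \emph{right} adjoint $(\Fun(BH,\blank))_H$ sends relative cell complexes to weak equivalences, so you are back to evaluating the right adjoint on a pushout it has no a priori reason to preserve. Propositions~\ref{prop:h-sd-cofib} and~\ref{prop:pushout-dwyer-nerve} "on the nose" only give you the nerve of $D$, not the nerve of $\Fun(BH,D)$.

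The missing ingredient is precisely Lemma~\ref{lemma:pushout-dwyer-fun} (the first half of \cite[Theorem~1.5]{schwede-cat}), which the paper records because it is the crucial, and genuinely nontrivial, input to Schwede's proof: if $X$ is a category with morphisms in both directions between any two objects --- e.g.\ $X=BH$ --- then $\Fun(X,\blank)$ carries pushouts along Dwyer maps to pushouts. Granting that, your argument closes up: $\Fun(BH,\blank)$ also preserves Dwyer maps (the case $G=1$ of Lemma~\ref{lemma:Dwyer-closure}\,(\ref{item:Dwyer-fun})), so $\Fun(BH,D)$ is a pushout along the Dwyer map $\Fun(BH,BH'\times\h\Sd^2(\Lambda^n_k\hookrightarrow\Delta^n))\cong\Fun(BH,BH')\times\h\Sd^2(\Lambda^n_k\hookrightarrow\Delta^n)$ (the identification uses that $\h\Sd^2\Lambda^n_k$ and $\h\Sd^2\Delta^n$ are posets, so functors out of $BH$ into them are constant on morphisms), which is a weak equivalence after applying $\nerve$; then Proposition~\ref{prop:pushout-dwyer-nerve} and left properness of the Kan--Quillen model structure finish the single-cell step, and your transfinite-composition argument (together with the fact that $\Fun(BH,\blank)$ preserves filtered colimits, $BH$ being finite) does the rest. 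Without Lemma~\ref{lemma:pushout-dwyer-fun}, or some substitute for it, the proof does not go through.
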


Strictly speaking, $I$ and $J$ are not sets as there are too many finite groups. However, this can be easily cured by restricting to a system of representatives of isomorphism classes of finite groups, which we will tacitly assume below.

\begin{proof}
Specializing \cite[Theorem~1.12]{schwede-cat} to the collection $\{BH: H\text{ finite}\allowbreak\hskip\spaceskip\text{group}\}$ shows that this model structure exists and is proper, also see \cite[Theorem~3.3]{schwede-cat}. Moreover, Schwede's proof explicitly identifies $I$ and $J$ as set of generating cofibrations and generating acyclic cofibrations, respectively.
\end{proof}

The following lemma is crucial to Schwede's proof of the above theorem and will also be instrumental later in establishing our $G$-global generalization:

\begin{lemma}\label{lemma:pushout-dwyer-fun}
Let $X$ be a small category such that for every $x,y\in X$ there exists both a morphism $x\to y$ as well as $y\to x$. Let moreover
\begin{equation*}
\begin{tikzcd}
A\arrow[r, "i"]\arrow[d] & B\arrow[d]\\
C\arrow[r] & D
\end{tikzcd}
\end{equation*}
be a pushout in $\cat{Cat}$ where $i$ is a Dwyer map. Then also the induced square
\begin{equation*}
\begin{tikzcd}[column sep=large]
\Fun(X,A)\arrow[r, "{\Fun(X,i)}"]\arrow[d] & \Fun(X,B)\arrow[d]\\
\Fun(X,C)\arrow[r] & \Fun(X,D)
\end{tikzcd}
\end{equation*}
is a pushout.
\begin{proof}
This is the first half of \cite[Theorem~1.5]{schwede-cat}.
\end{proof}
\end{lemma}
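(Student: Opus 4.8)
The plan is to reduce to the explicit description of pushouts along Dwyer maps (Construction~\ref{constr:pushout-Dwyer}, which we may invoke) and then check that applying $\Fun(X,\blank)$ to that description yields exactly the corresponding description for the pushout of $\Fun(X,i)$. The key point to exploit is that a Dwyer map $i\colon A\hookrightarrow D$ identifies $A$ with a sieve in $D$ and that the complementary full subcategory on the objects not in $A$ receives a reflection (or the sieve admits a retraction-like structure coming from the factorization $i=jf$ through a cosieve $j$ and an $f$ with right adjoint). Concretely, in a pushout of $A\xhookrightarrow{i}B$ along $A\to C$ with $i$ a Dwyer map, the category $D$ has object set $\Ob B\amalg_{\Ob A}\Ob C$, the subcategory on $\Ob C$ is $C$, the subcategory on $(\Ob B\setminus\Ob A)$ is the corresponding full subcategory of $B$, morphisms out of the $C$-part into the $B$-part are obtained by composing a morphism in $C$ landing in (the image of) $A$ with a morphism of $B$, using the adjoint/cosieve structure to make this well-defined, and there are no morphisms from the $B\setminus A$ part back to $C\setminus A$. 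I would first restate this description precisely.

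Next I would apply $\Fun(X,\blank)$. A functor $X\to D$ is the same as: for each object $x\in X$, an object of $D$, and for each morphism of $X$, a morphism of $D$, compatibly. The hypothesis on $X$---that between any two objects there is a morphism in each direction---forces the image of any functor $F\colon X\to D$ to lie entirely within one of the two `pieces' of $D$: if some $x$ maps into the $C$-part and some $y$ maps into the $(B\setminus A)$-part, then a morphism $y\to x$ in $X$ would have to map to a morphism in $D$ from the $(B\setminus A)$-part to the $C\setminus A$-part, of which there are none; hence either the whole image lies in $C$ or the whole image lies in $B$, and in the overlap case (image meeting only the $A$-part, equivalently landing in $A\subset D$) it factors through $A$. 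This is precisely the statement that $\Fun(X,D)=\Fun(X,B)\amalg_{\Fun(X,A)}\Fun(X,C)$ on objects. I would then check the same dichotomy at the level of morphisms in $\Fun(X,D)$, i.e.\ natural transformations: a natural transformation between two functors with image in $C$ is the same as a natural transformation in $\Fun(X,C)$, similarly for $B$, and a natural transformation cannot connect a functor landing in $C\setminus A$ to one landing in $B\setminus A$ for the same reason (its components would be forbidden morphisms of $D$), while if both are in $A$ the transformation lies in $\Fun(X,A)$. Finally one must see that the universal property glues: given functors $\Fun(X,B)\to\mathcal D\leftarrow\Fun(X,C)$ agreeing on $\Fun(X,A)$, they assemble to a functor on $\Fun(X,D)$ because every object and morphism of $\Fun(X,D)$ is visibly in the image of one side, consistently on the overlap.

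The main obstacle is the well-definedness of composition and of natural transformations on the `mixed' morphisms in $D$ coming from Construction~\ref{constr:pushout-Dwyer}---that is, keeping track of how the cosieve/adjunction data in the Dwyer factorization is used---and then verifying that $\Fun(X,\blank)$ transports exactly this data. Once the description of $D$ is in hand, the connectedness hypothesis on $X$ does essentially all the work: it is exactly what rules out a functor or natural transformation straddling both pieces, and hence exactly what makes the square a pushout rather than merely a pullback or something weaker. I expect the verification of the universal property to be formal given the object/morphism analysis, so the write-up would spend most of its effort on the explicit pushout description and the case distinction forced by the hypothesis on $X$.
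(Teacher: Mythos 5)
Your overall strategy---identifying $\Fun(X,D)$ with the output of Construction~\ref{constr:pushout-Dwyer} applied to the Dwyer map $\Fun(X,i)$---is the right one, and it is essentially how the cited result \cite[Theorem~1.5]{schwede-cat} is actually proved (the paper itself only gives the reference). Your object-level analysis is also correct: the hypothesis on $X$, together with the fact that $\Hom_D(v,z)=\varnothing$ for $v\in V=\Ob B\setminus\Ob A$ and $z\in\Ob C$, forces every functor $X\to D$ to land either entirely in $C$ or entirely in $V$, whence $\Ob\Fun(X,D)\cong\Ob\Fun(X,C)\amalg\bigl(\Ob\Fun(X,B)\setminus\Ob\Fun(X,A)\bigr)$.

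The morphism-level analysis, however, contains a genuine error. You assert that a natural transformation cannot connect a functor landing in $C$ to one landing in $B\setminus A$ because ``its components would be forbidden morphisms of $D$.'' That is true only in one direction: while $\Hom_D(v,z)=\varnothing$ for $v\in V$, $z\in\Ob C$, the description $(\ref{eq:hom-pushouts})$ gives $\Hom_D(z,v)=\Hom_C(z,cr(v))$ for $z\in\Ob C$ and $v\in V$ lying in the cosieve through which $i$ factors, and this is typically nonempty. Consequently $\Fun(X,D)$ contains ``mixed'' natural transformations $F\Rightarrow G$ from an arbitrary $F\colon X\to C$ to a $G$ landing in $V$; unwinding naturality in $D$ identifies their set with $\Hom_{\Fun(X,C)}(F,(cr)\circ G)$, which is precisely the mixed Hom-set that Construction~\ref{constr:pushout-Dwyer}, applied to the Dwyer map $\Fun(X,i)$ (cf.\ Lemma~\ref{lemma:Dwyer-closure}), prescribes for the pushout $\Fun(X,B)\amalg_{\Fun(X,A)}\Fun(X,C)$. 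Matching these mixed morphisms, and their composites with the unmixed ones, is the real content of the proof, and your write-up denies their existence. The same oversight invalidates the proposed verification of the universal property: a mixed transformation out of an $F$ that does not factor through $c\colon A\to C$ lies in the image of neither $\Fun(X,B)\to\Fun(X,D)$ nor $\Fun(X,C)\to\Fun(X,D)$, so it is false that every morphism of $\Fun(X,D)$ is in the image of one side---and in $\cat{Cat}$ joint surjectivity on objects and morphisms would in any case not suffice to establish a pushout without also controlling how morphisms from the two sides compose. The fix is to drop the gluing argument and instead exhibit an isomorphism of categories between $\Fun(X,D)$ and the explicit pushout of $\Fun(X,B)\gets\Fun(X,A)\to\Fun(X,C)$ from Construction~\ref{constr:pushout-Dwyer}, compatible with the two structure functors.
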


\section{Transferring model structures}\label{sec:transfer}
Just as the usual equivariant model structures on $\cat{$\bm G$-Cat}$ or $\cat{$\bm G$-SSet}$, the $G$-global model structures we discuss in this paper will be obtained as \emph{transferred model structures}:

\begin{defi}
Let $\mathscr C$ be a model category, let $\mathscr D$ be a complete and cocomplete category, and let
\begin{equation*}
F\colon\mathscr C\rightleftarrows\mathscr D :\!U
\end{equation*}
be an adjunction. The \emph{model structure transferred along $F\dashv U$} is the (unique if it exists) model structure on $\mathscr D$ in which a morphism $f$ is a weak equivalence or fibration if and only if $Uf$ is a weak equivalence or fibration, respectively, in $\mathscr C$.
\end{defi}

We now give a criterion for the existence of transferred model structures that we will use for all our constructions later.

\begin{prop}\label{prop:transfer-criterion}
Let $\mathscr C$ be a left proper cofibrantly generated model category such that filtered colimits in $\mathscr C$ are homotopical, and let $I,J$ be sets of generating (acyclic) cofibrations. Moreover, let $\mathscr D$ be a locally presentable category together with an adjunction $F\colon\mathscr C\rightleftarrows\mathscr D :\!U$, and assume the following:
\begin{enumerate}
\item For each $j\in J$, the map $UFj$ is a weak equivalence.\label{item:tp-weak-equivalence}
\item $U$ sends any pushout square
\begin{equation*}
\begin{tikzcd}
FA\arrow[d]\arrow[r, "Fi"] & FB\arrow[d]\\
C \arrow[r] & D
\end{tikzcd}
\end{equation*}
in $\mathscr D$, where $i\in I$ is a generating cofibration of $\mathscr C$, to a homotopy pushout in $\mathscr C$.\label{item:tp-pushout}
\item $U$ preserves filtered colimits up to weak equivalence, i.e.~for each filtered poset $P$ and each diagram $X_\bullet\colon P\to\mathscr D$ the natural
comparison map $\colim_P(U\circ X_\bullet)\to U(\colim_PX_\bullet)$ is a weak equivalence.\label{item:tp-filtered}
\end{enumerate}
Then the transferred model structure on $\mathscr D$ exists and it is combinatorial with set of generating cofibrations $FI$ and set of generating acyclic cofibrations $FJ$. This model structure is left proper with homotopy pushouts created by $U$, and filtered colimits in $\mathscr D$ are homotopical; if $\mathscr C$ is right proper, then so is $\mathscr D$, and $U$ also creates homotopy pullbacks.

Moreover, in the presence of $(\ref{item:tp-pushout})$ and $(\ref{item:tp-filtered})$ the first condition is implied by
\begin{enumerate}
    \item[(\ref*{item:tp-weak-equivalence}$'$)]\mylabel{item:tp-unit-we}{1'} $J$ consists of maps between cofibrant objects. Moreover, the unit $\eta_\varnothing$ is a weak equivalence, and for each generating cofibration $(X\to Y)\in I$ both $\eta_X$ and $\eta_Y$ are weak equivalences.
\end{enumerate}
Finally, under this stronger assumption the adjunction $F\dashv U$ is a Quillen equivalence.
\begin{proof}
Assume first that $(\ref{item:tp-weak-equivalence})$--$(\ref{item:tp-filtered})$ hold. To see that the transferred model structure exists and is cofibrantly generated by $FI$ and $FJ$ it suffices to verify the assumptions of the usual transfer criterion for cofibrantly generated model categories \cite[Theorem~11.3.2]{hirschhorn}. As $\mathscr D$ is locally presentable, the smallness assumption is automatically satisfied, so we only have to show that relative $FJ$-cell complexes are weak equivalences. By Condition~$(\ref{item:tp-filtered})$ it is then enough to verify that pushouts of maps of the form $Fj$ with $j\in J$ are weak equivalences in $\mathscr D$, i.e.~sent under $U$ to weak equivalences in $\mathscr C$.

Let us consider the class $\mathscr H$ of those maps $i'\colon A'\to B'$ in $\mathscr D$ such that $U$ sends pushouts along them to homotopy pushouts, i.e.~those maps such that the analogue of Condition~($\ref{item:tp-pushout}$) holds for them. \cite[Proposition~A.2.7]{g-global} then shows that $\mathscr H$ is closed under pushouts, transfinite compositions, and retracts.

As $F$ preserves pushouts, transfinite compositions, and retracts (being a left adjoint functor), it follows that also $F^{-1}(\mathscr H)$ is closed under all of these. As it contains all $i\in I$ by assumption, it follows by the characterizations of cofibrations in a cofibrantly generated model category that $F^{-1}(\mathscr H)$ contains all cofibrations of $\mathscr C$; in particular it contains $J$. Hence if $(j\colon A\to B)\in J$ is a generating acyclic cofibration and we have any pushout square
\begin{equation}\label{diag:push-out-gen-acyclic}
\begin{tikzcd}
FA\arrow[d]\arrow[r, "Fj"] & FB\arrow[d]\\
C \arrow[r, "k"'] & D,
\end{tikzcd}
\end{equation}
then applying $U$ to this yields a homotopy pushout in $\mathscr D$. But $UFj$ is a weak equivalence by Condition~$(\ref{item:tp-weak-equivalence})$. It follows that $Uk$ is a weak equivalence, and hence by definition so is $k$. Altogether, we conclude that the transferred model structure exists and is cofibrantly generated by $FI$ and $FJ$ (hence combinatorial).

But with this established we conclude by the same argument (this time applied in $\mathscr D$) from the closure properties of $\mathscr H$ that $U$ sends pushouts along cofibrations in $\mathscr D$ to homotopy pushouts. Thus, \cite[Lemma~A.2.15]{g-global} shows that $\mathscr D$ is left proper with homotopy pushouts created by $U$. The statements about filtered colimits and homotopy pullbacks are trivial, finishing the proof of the first half of the proposition.

Now assume that $(\ref{item:tp-unit-we})$, $(\ref{item:tp-pushout})$, and $(\ref{item:tp-filtered})$ hold. We first observe:

\begin{claim*}
The unit $\eta_X$ is a weak equivalence for each cofibrant $X\in\mathscr C$.
\begin{proof}
This is a standard cell induction argument. By Quillen's Retract Argument, any cofibrant object is a retract of an $I$-cell complex; as weak equivalences are closed under retracts, it therefore suffices to prove the claim for every $I$-cell complex $X$.

To this end, we write $X$ as a transfinite composition $\varnothing=X_0\to X_1\to\cdots X_\alpha=X$ of pushouts of maps in $I$ for some ordinal $\alpha$. We will now prove by transfinite induction that $\eta_{X_\beta}$ is a weak equivalence for every $\beta\le\alpha$.

For $\beta=0$ this is part of Condition~$(\ref{item:tp-unit-we})$. If $\beta=\gamma+1$ is a successor ordinal, then we exhibit $X_\gamma\to X_\beta$ as a pushout of some generating cofibration $i\colon A\to B$ and consider the induced commutative cube
\begin{equation}\label{diag:tp-cube}
\begin{tikzcd}[column sep=small, row sep=small]
 & UFA \arrow[rr, "UFi"]\arrow[dd] && UFB\arrow[dd]\\
A\arrow[ur] \arrow[rr, "\qquad i", crossing over]\arrow[dd] && B\arrow[ur]\\
 & UFX_\gamma\arrow[rr] && UFX_\beta\\
X_\gamma\arrow[rr]\arrow[ur] && X_\beta\arrow[from=uu, crossing over]\arrow[ur]
\end{tikzcd}
\end{equation}
where all front-to-back maps are given by $\eta$. The front square is a homotopy pushout as $\mathscr C$ is left proper, and so is the back square by Condition~$(\ref{item:tp-pushout})$ and since $F$ preserves pushouts.

In $(\ref{diag:tp-cube})$, the upper front-to-back maps are weak equivalences by Condition~$(\ref{item:tp-unit-we})$, and so is the lower left one by the induction hypothesis. Thus, also $\eta_{X_\beta}$ is a weak equivalence as desired.

Finally, if $\beta$ is a limit ordinal, then we consider the commutative square
\begin{equation*}
\begin{tikzcd}
\colim\nolimits_{\gamma<\beta} X_\gamma\arrow[r]\arrow[d, "{\colim_{\gamma<\beta}\eta}"'] & X_\beta\arrow[d, "\eta"]\\
\colim\nolimits_{\gamma<\beta} UFX_\gamma\arrow[r] & UFX_\beta
\end{tikzcd}
\end{equation*}
where the horizontal maps are induced by the structure maps $X_\gamma\to X_\beta$; in particular, the upper map is an isomorphism and the lower map is a weak equivalence by Condition~$(\ref{item:tp-filtered})$. On the other hand, the left hand map is a filtered colimit of weak equivalences by the induction hypothesis, hence a weak equivalence by assumption on $\mathscr C$. Thus, also the right hand vertical map is a weak equivalence by $2$-out-of-$3$, which completes the proof of the claim.
\end{proof}
\end{claim*}

If now $j\colon X\to Y$ is one of the chosen generating acyclic cofibrations, then $X$ and $Y$ are cofibrant by assumption, so $\eta_X$ and $\eta_Y$ are weak equivalences by the above. Thus, also $UFj$ is a weak equivalence by $2$-out-of-$3$, proving $(\ref{item:tp-weak-equivalence})$, and in particular supplying the desired model structure. To show that $F\dashv U$ is a Quillen equivalence, we observe that $U$ creates weak equivalences by definition, so that it suffices that $\eta\colon X\to UFX$ is a weak equivalence for each cofibrant $X$, which was verified above.
\end{proof}
\end{prop}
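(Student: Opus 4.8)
The plan is to obtain the transferred structure from the standard recognition theorem for cofibrantly generated model categories (e.g.\ \cite[Theorem~11.3.2]{hirschhorn}). Since $\mathscr D$ is locally presentable every object is small, so the only nontrivial point is that relative $FJ$-cell complexes become weak equivalences under $U$. By Condition~(\ref{item:tp-filtered}) and homotopicality of filtered colimits in $\mathscr C$, this reduces to showing that a single pushout of a map $Fj$, $j\in J$, is a weak equivalence. To get this I would introduce the class $\mathscr H$ of maps in $\mathscr D$ along which $U$ carries pushouts to homotopy pushouts in $\mathscr C$; using left properness of $\mathscr C$ (concretely, that homotopy pushouts are stable under composition, transfinite composition, and retracts, as in \cite[Proposition~A.2.7]{g-global}) one checks that $\mathscr H$ is suitably saturated, hence its $F$-preimage is closed under pushouts, transfinite compositions, and retracts. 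Condition~(\ref{item:tp-pushout}) says $I\subseteq F^{-1}(\mathscr H)$, so $F^{-1}(\mathscr H)$ contains all cofibrations of $\mathscr C$ and in particular $J$. Then pushouts of $Fj$ become homotopy pushouts under $U$, and since $UFj$ is a weak equivalence by Condition~(\ref{item:tp-weak-equivalence}) and weak equivalences are stable under cobase change in homotopy pushouts, we conclude.

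Once the structure exists, I would rerun the same saturation argument inside $\mathscr D$ to see that $U$ sends pushouts along arbitrary cofibrations of $\mathscr D$ to homotopy pushouts; combined with the fact that $U$ creates weak equivalences by construction, a standard argument (as in \cite[Lemma~A.2.15]{g-global}) yields that $\mathscr D$ is left proper with homotopy pushouts created by $U$. Homotopicality of filtered colimits in $\mathscr D$ is then immediate from Condition~(\ref{item:tp-filtered}) together with the corresponding property of $\mathscr C$ and the fact that $U$ creates weak equivalences; and if $\mathscr C$ is right proper, the statements about $\mathscr D$ follow because $U$, being a right adjoint, preserves pullbacks while creating fibrations and weak equivalences, so homotopy pullbacks are detected by $U$.

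For the replacement of Condition~(\ref{item:tp-weak-equivalence}) by (\ref{item:tp-unit-we}), the heart of the matter is to show by transfinite cell induction that $\eta_X\colon X\to UFX$ is a weak equivalence whenever $X$ is cofibrant. Since cofibrant objects are retracts of $I$-cell complexes (Quillen's retract argument) and weak equivalences are closed under retracts, it suffices to treat an $I$-cell complex $X=\colim_\beta X_\beta$. The base case $X_0=\varnothing$ is the assumption on $\eta_\varnothing$; for a successor step $X_\gamma\to X_{\gamma+1}$, a pushout of some $(i\colon A\to B)\in I$, I would compare the front homotopy-pushout square (left properness of $\mathscr C$) with the back square $UFX_\gamma\to UFX_{\gamma+1}\leftarrow UFB\leftarrow UFA$, which is a homotopy pushout by Condition~(\ref{item:tp-pushout}) since $F$ preserves pushouts; as $\eta_A,\eta_B$ and $\eta_{X_\gamma}$ are weak equivalences, so is $\eta_{X_{\gamma+1}}$ by the cube/gluing lemma for homotopy pushouts. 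Limit steps use Condition~(\ref{item:tp-filtered}) and homotopical filtered colimits together with $2$-out-of-$3$. Applying this to the (cofibrant, by (\ref{item:tp-unit-we})) endpoints of each $j\in J$ and using naturality plus $2$-out-of-$3$ recovers Condition~(\ref{item:tp-weak-equivalence}).

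Finally, for the Quillen equivalence: $F\dashv U$ is a Quillen adjunction by construction, and since $U$ creates \emph{all} weak equivalences it suffices to know the derived unit is a weak equivalence on cofibrant objects; but it factors as $X\xrightarrow{\eta_X}UFX\to U(RFX)$ with the second map $U$ applied to a fibrant replacement (hence a weak equivalence since $U$ preserves weak equivalences) and the first a weak equivalence by the cell induction above. I expect the main obstacle to be the bookkeeping in the two saturation arguments for $\mathscr H$ — in particular stating and using the stability of homotopy pushouts under transfinite composition in a left proper model category — with the cube lemma step in the cell induction a close second; the rest is formal.
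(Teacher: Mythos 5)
Your proposal is correct and follows essentially the same route as the paper's own proof: the Hirschhorn transfer criterion with smallness from local presentability, the saturated class $\mathscr H$ of maps whose pushouts are sent to homotopy pushouts (run first in $\mathscr C$ via $F^{-1}(\mathscr H)$ and then again in $\mathscr D$ for left properness), and the transfinite cell induction with the gluing lemma at successor stages and Condition~(3) at limit stages to deduce Condition~(1) from (1$'$) and the Quillen equivalence. The only cosmetic difference is that you spell out the right-properness and derived-unit steps that the paper dismisses as trivial.
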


\section{Categories with monoid actions}\label{sec:monoid}
In this section we want to prove the analogue of Proposition~\ref{prop:equiv-model-structure} for suitable strict monoids in $\cat{Cat}$ as well as a comparison between the simplicial and categorical approaches.

\subsection{Equivariant Dwyer maps for groups}
Dwyer maps are central to Thomason's treatment of his model structure on $\cat{Cat}$, and it should come as no surprise that we will need an equivariant version of this. For this it will be convenient to consider the case of ordinary groups first (as some arguments will only work in this setting), so let us fix a not necessarily finite discrete group $G$.

\begin{defi}\label{defi:dwyer-group}
A $G$-equivariant functor $i\colon C\to D$ of small $G$-categories is called a \emph{$G$-equivariant Dwyer map} if it is a sieve and it admits a factorization $i=jf$ into $G$-equivariant functors $f\colon C\to X$, $j\colon X\to D$ with the following properties:
\begin{enumerate}
\item $j$ is a cosieve.
\item $f$ admits a $G$-equivariant right adjoint, i.e.~there exists a $G$-equivariant functor $r\colon X\to C$ together with $G$-equivariant natural transformations $\eta\colon\id\Rightarrow ri$ and $\epsilon\colon ir\Rightarrow\id$ satisfying the usual triangle identities.\label{item:Dwyer-existence-adjoint}
\end{enumerate}
\end{defi}

\begin{rk}\label{rk:ordinary-dwyer}
For $G=1$ the above is equivalent to $i$ being an ordinary (i.e.~non-equivariant) Dwyer map, see Definition~\ref{defi:ordinary-dwyer}. Conversely, if $i$ is a non-equivariant Dwyer map, then it is a $G$-equivariant Dwyer map with respect to the trivial $G$-actions on source and target for any discrete group $G$.
\end{rk}

\begin{rk}\label{rk:nice-retraction}
May, Stephan, and Zakharevich \cite[Definition~4.1]{equivariant-posets} additionally require the map $\eta$ in $(\ref{item:Dwyer-existence-adjoint})$ to be the identity transformation; however, this can always be arranged, so that the above definition agrees with their notion of a \emph{Dwyer $G$-map}:

The first condition guarantees that $f$ is again fully faithful. It follows then formally that for any right adjoint $\tilde r$ the unit transformation $\tilde\eta\colon\id\Rightarrow\tilde rf$ is a natural isomorphism. As $f$ is obviously injective on objects, it is well-known in the non-equivariant setting that we may massage $\tilde r$ to another right adjoint $r$ of $f$ such that the unit $\eta$ is actually the identity.

The same proof works in the equivariant setting, but I do not know a reference for this. As this extra condition will become relevant later, let me briefly sketch the argument. We first define $r\colon X\to C$ on objects via
\begin{equation*}
r(x)=\begin{cases}
c & \text{if }x=f(c)\\
\tilde r(x) & \text{if }x\notin\im f.
\end{cases}
\end{equation*}
This is well-defined as $f$ is injective on objects, and it is $G$-equivariant because $\im f$ and hence also $(\im f)^c$ are closed under the action of $G$; note that this would break down for general monoids. We now define for each $x\in X$ an isomorphism $\phi_x\colon r(x)\to \tilde r(x)$ as follows: if $x=f(c)$, then $\phi_x$ is the unit $\tilde\eta_c\colon r(x)=c\to \tilde rf(c)=\tilde r(x)$; otherwise, $\phi_x$ is the identity. It is then obvious that this is again compatible with the $G$-action in the sense that $\phi_{g.x}=g.\phi_x$.

There is a unique way to extend $r$ to a functor in such a way that $\phi$ becomes a natural isomorphism $r\cong\tilde r$, namely $r(\alpha\colon x\to y)=\phi_y^{-1}\tilde r(\alpha)\phi_x$. It follows then from the above compatibility of $\phi$ with the $G$-action that $r$ is again $G$-equivariant and that $\phi$ is a $G$-equviariant isomorphism. One then easily checks that $\eta\mathrel{:=}\id=\phi^{-1}f\circ\tilde\eta\colon\id\Rightarrow rf$ and $\epsilon\mathrel{:=}\tilde\epsilon\circ f\phi\colon fr\Rightarrow\id$ exhibit $r$ as $G$-equivariant right adjoint of $f$ as desired.
\end{rk}

Below we will need the following closure properties of $G$-equivariant Dwyer maps, the first one of which can also be found (without proof) as \cite[Lemma~4.2]{equivariant-posets}.

\begin{lemma}\label{lemma:Dwyer-closure}
Let $i\colon C\to D$ be a $G$-equivariant Dwyer map.
\begin{enumerate}
\item\label{item:Dwyer-fixed-points} Let $H\subset G$ be a subgroup. Then $i^H\colon C^H\to D^H$ is a Dwyer map.
\item\label{item:Dwyer-product} Let $S$ be any small $G$-category. Then $S\times i\colon S\times C\to S\times D$ is a $G$-equivariant Dwyer map.
\item\label{item:Dwyer-fun} Let $T$ be a small right $G$-category. Then also $\Fun(T,i)\colon\Fun(T,C)\to\Fun(T,D)$ is a $G$-equivariant Dwyer map.
\end{enumerate}
\begin{proof}
All of these follow the same pattern, so we will only prove the first statement. We pick a factorization $i=jf$ as above and a $G$-equivariant adjunction $f\dashv r$ with unit $\eta$ and counit $\epsilon$. It is then easy to check that $i^H$ is again a sieve and that $j^H$ is a cosieve. Moreover, $r^H$ is right adjoint to $f^H$ with unit $\eta^H$ and counit $\epsilon^H$, so $i^H=j^Hf^H$ is the desired factorization.
\end{proof}
\end{lemma}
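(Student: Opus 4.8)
The plan is to prove all three statements at once by observing that, by Remark~\ref{rk:nice-retraction}, we may fix a factorization $i=jf$ with $j$ a cosieve together with a $G$-equivariant adjunction $f\dashv r$ whose unit $\eta$ and counit $\epsilon$ are $G$-equivariant (we will not even need $\eta$ to be the identity). In each of the three cases the relevant operation --- taking $H$-fixed points, forming $S\times\blank$, or applying $\Fun(T,\blank)$ --- is a functor $\cat{$\bm G$-Cat}\to\cat{$\bm G$-Cat}$ which sends sieves to sieves and cosieves to cosieves and which is $2$-functorial, so that it carries the adjunction $f\dashv r$ to a $G$-equivariant adjunction between the images of $f$ and $r$, with unit and counit obtained by applying the operation to $\eta$ and $\epsilon$. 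Applying it to $i=jf$ then exhibits the image of $i$ as a $G$-equivariant sieve which factors as a functor admitting a $G$-equivariant right adjoint followed by a cosieve, i.e.\ as a $G$-equivariant Dwyer map.

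For part~(\ref{item:Dwyer-fixed-points}) this is the argument already indicated: since $i$ is a sieve it is injective on objects and fully faithful, and these properties together with the sieve condition pass to $i^H$ --- if $\alpha\colon d\to i(c)$ is $H$-fixed with $c\in C^H$, write $d=i(c')$ and $\alpha=i(\beta)$ using that $i$ is a sieve; then $H$-equivariance of $i$ and injectivity on objects force $c'\in C^H$ and $\beta$ to be $H$-fixed. The same reasoning applied to $j$ shows $j^H$ is a cosieve. Finally $r$ restricts to a functor $X^H\to C^H$ because it is $H$-equivariant, and $\eta^H$, $\epsilon^H$ still satisfy the triangle identities, so $f^H\dashv r^H$ and $i^H=j^Hf^H$ is the required factorization.

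Parts~(\ref{item:Dwyer-product}) and~(\ref{item:Dwyer-fun}) run along the same lines. That $S\times\blank$ preserves sieves and cosieves is immediate from the pointwise description of morphisms in a product, and that $\Fun(T,\blank)$ does so uses once more that the sieve $i$ is fully faithful and injective on objects: a natural transformation $\alpha\colon F\Rightarrow i\circ G$ in $\Fun(T,D)$ has, by the sieve property, every component of the form $\alpha_t=i(\beta_t)\colon i(H(t))\to i(G(t))$ for a unique $H(t)\in C$ and a unique $\beta_t$, the assignment $t\mapsto H(t)$ extends uniquely to a functor $T\to C$ (functoriality follows from faithfulness of $i$ and the sieve property at each target object), $\beta$ becomes natural, and $i\circ H=F$, $i\circ\beta=\alpha$; dually for $j$. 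Both operations are $2$-functorial, hence carry $f\dashv r$ to $S\times f\dashv S\times r$ (resp.\ $\Fun(T,f)\dashv\Fun(T,r)$) with unit and counit the whiskerings of $\eta$ and $\epsilon$. The one point requiring a short computation is that these induced transformations remain $G$-equivariant: for $S\times\blank$ this is trivial, and for $\Fun(T,\blank)$ --- equipped with the action $(g.F)(t)=g.F(t.g)$ --- it follows by unwinding this formula and invoking the $G$-equivariance of $\eta$ and $\epsilon$.

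The only part with any content is thus the verification that $\Fun(T,\blank)$ preserves sieves and cosieves and that its induced unit and counit are still $G$-equivariant; everything else is formal, and in particular no hypothesis on $S$ or $T$ beyond smallness enters.
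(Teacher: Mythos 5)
Your argument is correct and follows essentially the same route as the paper's proof: fix a $G$-equivariant factorization $i=jf$ with adjunction $f\dashv r$, and observe that each of the three operations preserves sieves, cosieves, and ($G$-equivariant) adjunctions. The paper only writes out case~(\ref{item:Dwyer-fixed-points}) and declares the others analogous, whereas you supply the details (in particular the verification that $\Fun(T,\blank)$ preserves sieves and that the whiskered unit and counit remain $G$-equivariant), all of which check out.
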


In general, pushouts in $\cat{Cat}$ (and hence also in $\cat{$\bm G$-Cat}$) are very difficult to describe on the level of morphisms. One advantage of ordinary Dwyer maps is that one can give an explicit and tractable description of pushouts along them, see \cite[Construction~1.2]{schwede-cat} which generalizes \cite[Lemma~2.5]{g-wit}:

\begin{constr}\label{constr:pushout-Dwyer}
Let
\begin{equation*}
\begin{tikzcd}
A \arrow[r, "i"]\arrow[d, "c"'] & B\\
C
\end{tikzcd}
\end{equation*}
be a diagram in $\cat{Cat}$ and assume that $i$ is a Dwyer map. Fix a factorization $i=kf$ as in the definition of a Dwyer map and an adjunction $f\dashv r$ such that the unit is the identity; in particular the counit $\epsilon$ satisfies $\epsilon f=\id$ by the triangle identities. For simplicity of notation we assume further that $i$ and $k$ are honest inclusions of subcategories; we write $X$ for the source of $k$. Finally, let us write $V$ for the complement of $\Ob A$ in $\Ob B$.

We now define a category $D$ as follows: the objects of $D$ are given by the disjoint union $\Ob C\amalg V$ and the morphism set between $x,y\in D$ is defined as
\begin{equation}\label{eq:hom-pushouts}
\Hom_D(x,y)=\begin{cases}
\Hom_C(x,y) & \text{if }x,y\in C\\
\Hom_C(x, cr(y)) & \text{if }x\in C\text{ and }y\in V\cap X\\
\Hom_B(x, y) & \text{if }x,y\in V\\
\varnothing & \text{otherwise}.
\end{cases}
\end{equation}
Compositions are in such a way that the obvious maps $B\to D$ and $C\to D$ are actual functors; moreover, if $x,y\in C$ and $z\in V\cap X$ then the composition
\begin{equation*}
x\xrightarrow{\alpha} y\xrightarrow{\beta} z
\end{equation*}
in $D$, where $\alpha$ is a morphism $x\to y$ in $C$ and $\beta$ is a morphism $y\to cr(z)$ in $C$, is defined as the composition $\beta\circ\alpha$ in $C$. On the other hand, if $x\in C$, $y,z\in V\cap X$, then the composition $\beta\circ\alpha$ in $D$, where now $\alpha\colon x\to cr(y)$ is a morphism in $C$ and $\beta\colon y\to z$ is a morphism in $V\cap X\subset B$, is defined as the composition $cr(\beta)\circ\alpha$ in $C$.

We have a functor $j\colon C\to D$ via the inclusion of $C$. Moreover, we define $d\colon B\to D$ as follows: on $V\subset B$ the functor $d$ is just given by the inclusion and on $A=V^c$ via $c$. Finally, if $\beta\colon a\to x$ is a morphism in $B$, where $a\in A$ and $x\in X\cap V$, then
\begin{equation*}
d(\beta)=c(r(\beta))\colon \underbrace{cr(a)}_{\makebox[0pt]{$\scriptstyle{}=c(a)=d(a)$}}\to cr(x)\in \Hom_C(c(a), cr(x))=\Hom_D(d(a), d(x)).
\end{equation*}
We remark that this indeed a complete case distinction as $X\subset B$ is a cosieve (so any morphism starting in $A\subset X$ has to end in $X=(X\cap V)\cup A$) and $A\subset B$ is a sieve (so any arrow ending in $A$ also has to start in $A$).

We omit the verification that $D$ is a category and that these are well-defined functors exhibiting $D$ as pushout (which uses that $\epsilon f=\id$), and instead refer the curious reader to \cite[Construction~1.2]{schwede-cat}.
\end{constr}

\begin{constr}\label{constr:pushout-G-Dwyer}
Now assume that $A,B,C$ are $G$-categories, $i$ is a $G$-equi\-vari\-ant Dwyer map, and $c$ is any equivariant functor. By Remark~\ref{rk:nice-retraction} we may choose the $G$-equivariant adjunction $f\dashv r$ such that $\epsilon f=\id$, allowing us to apply the above construction with respect to this data.

We equip $D$ with the following $G$-action: $G$ acts on the full subcategories $C$ and $V$ in the obvious way; observe that $V$ is indeed preserved by the $G$-action as its complement is---here we again used that $G$ is a group as opposed to a mere monoid. Finally, if $x\in C$, $y\in V\cap X$ and $\alpha\colon x\to cr(y)$ defines a morphism $x\to y$ in $D$, then we define the $G$-action again via the $G$-action on $C$; note that this indeed makes sense as both $c$ and $r$ are assumed to be $G$-equivariant.

We leave the easy verification that this is indeed a $G$-action and that $j$ and $d$ are $G$-equivariant to the reader.
\end{constr}

As pushouts in $\cat{$\bm G$-Cat}$ are created in $\cat{Cat}$ we immediately get:

\begin{cor}
With respect to the above $G$-action on $D$,
\begin{equation*}
\begin{tikzcd}
A\arrow[d, "c"'] \arrow[r, "i"] & B\arrow["d", d]\\
C \arrow[r, "j"'] & D
\end{tikzcd}
\end{equation*}
becomes a pushout in $\cat{$\bm G$-Cat}$.\qed
\end{cor}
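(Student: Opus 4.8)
The plan is to deduce this immediately from the fact that the forgetful functor $U\colon\cat{$\bm G$-Cat}\to\cat{Cat}$ creates small colimits. This is a general fact about categories of objects with group action: identifying $\cat{$\bm G$-Cat}$ with the functor category $\Fun(BG,\cat{Cat})$, colimits are computed objectwise, so any diagram in $\cat{$\bm G$-Cat}$ whose underlying diagram in $\cat{Cat}$ admits a colimit already admits one in $\cat{$\bm G$-Cat}$, and this colimit is both preserved and reflected by $U$.

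With this in hand the argument would proceed in three short steps. First, I would note that by Construction~\ref{constr:pushout-G-Dwyer} the category $D$ carries a well-defined $G$-action for which $j\colon C\to D$ and $d\colon B\to D$ are $G$-equivariant and agree on $A$, so that the square in the statement is a commutative square in $\cat{$\bm G$-Cat}$. Second, forgetting $G$-actions, Construction~\ref{constr:pushout-Dwyer} (i.e.~\cite[Construction~1.2]{schwede-cat}) exhibits the underlying square as a pushout in $\cat{Cat}$. Third, since $U$ creates colimits and in particular reflects pushouts, the square, lying over a pushout square in $\cat{Cat}$, is itself a pushout in $\cat{$\bm G$-Cat}$. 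One can also phrase the same point more explicitly: the pushout $C\amalg_A B$ formed in $\cat{$\bm G$-Cat}$ has underlying category the pushout $D$ of $\cat{Cat}$ equipped with the unique $G$-action making the cocone maps equivariant, and applying the universal property of $D$ in $\cat{Cat}$ to the twisted cocones $(g.j,g.d)$ for $g\in G$ produces precisely the action written down in Construction~\ref{constr:pushout-G-Dwyer}; the two therefore coincide by uniqueness.

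I expect essentially no obstacle here beyond the verifications already deferred to the reader in the two preceding constructions — namely that the explicit formulas for composition make $D$ a category with $j$, $d$ functors exhibiting a pushout in $\cat{Cat}$, and that the displayed formulas for the $G$-action on $D$ really define a group action rendering $j$ and $d$ equivariant. The only place where being a group (rather than a monoid) genuinely enters — that $V=\Ob B\setminus\Ob A$ is stable under the action because its complement $\Ob A$ is — has already been isolated in those constructions, and once it is granted the equivariant statement follows from the non-equivariant one with no further work.
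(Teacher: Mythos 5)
Your argument is exactly the paper's: the corollary is stated with an immediate \qed, justified by the preceding sentence that pushouts in $\cat{$\bm G$-Cat}$ are created in $\cat{Cat}$, combined with the underlying pushout from Construction~\ref{constr:pushout-Dwyer} and the equivariance of $j$ and $d$ from Construction~\ref{constr:pushout-G-Dwyer}. Your proposal is correct and adds only a slightly more explicit unwinding of the creation/reflection argument.
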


Now we can prove that pushouts along $G$-equivariant Dwyer maps are compatible with passing to fixed points, generalizing \cite[Proposition~2.4]{g-wit}.

\begin{prop}\label{prop:pushout-cat-groups}
Let
\begin{equation}\label{diag:pushout-G-Dwyer}
\begin{tikzcd}
A\arrow[d]\arrow[r, "i"] & B\arrow[d]\\
C\arrow[r] & D
\end{tikzcd}
\end{equation}
be a pushout in $\cat{$\bm G$-Cat}$ such that $i$ is a $G$-equivariant Dwyer map. Then for any subgroup $H\subset G$ also the induced square
\begin{equation}\label{diag:pushout-G-Dwyer-fixed-points}
\begin{tikzcd}
A^H\arrow[d]\arrow[r, "i^H"] & B^H\arrow[d]\\
C^H\arrow[r] & D^H
\end{tikzcd}
\end{equation}
is a pushout (along a Dwyer map).
\begin{proof}
Pick a factorization $i=kf$ together with a right adjoint $r$ of $f$ as in Construction~\ref{constr:pushout-G-Dwyer}; we may then assume without loss of generality that $(\ref{diag:pushout-G-Dwyer})$ is the square from this construction.

The map $i^H$ is a Dwyer map by Lemma~\ref{lemma:Dwyer-closure}-$(\ref{item:Dwyer-fixed-points})$; more precisely, by the proof of the lemma the factorization $i^H=k^Hf^H$ together with the right adjoint $r^H$ and the natural transformations $\eta^H$ and $\epsilon^H$ exhibits $i^H$ as Dwyer map. Thus it suffices to identify $(\ref{diag:pushout-G-Dwyer-fixed-points})$ with the pushout from Construction~\ref{constr:pushout-Dwyer} applied to the ordinary Dwyer map $i^H$ and the above data.

For this we spell out the definitions again: the set of objects of our construction of the pushout $(\ref{diag:pushout-G-Dwyer})$ is $C\amalg V$, where $V$ is the complement of $A$ in $B$, and the $H$-fixed points of this is $C^H\amalg V^H$. From the explicit description $(\ref{eq:hom-pushouts})$ of the Hom-sets we then see that for $x,y\in D^H$
\begin{align*}
\Hom_{D^H}(x,y)&=\begin{cases}
\Hom_C(x,y)^H & \text{if }x,y\in C\\
\Hom_C(x, cr(y))^H & \text{if }x\in C\text{ and }y\in V\cap X\\
\Hom_B(x, y)^H & \text{if }x,y\in V\\
\varnothing^H & \text{otherwise}
\end{cases}\\
&=\begin{cases}
\Hom_{C^H}(x,y) & \text{if }x,y\in C^H\\
\Hom_{C^H}(x, c^Hr^H(y)) & \text{if }x\in C^H\text{ and }y\in V^H\cap X^H\\
\Hom_{B^H}(x, y) & \text{if }x,y\in V^H\\
\varnothing & \text{otherwise}
\end{cases}
\end{align*}
As $V^H$ is the complement of $A^H$ in $B^H$ and $X^H$ is the source of $f^H$, these are literally the objects and morphism sets of the above construction of the pushout of $C^H\gets A^H\to B^H$. Moreover, one checks by direct inspection that the composition is defined in the same way and that also the structure maps $B^H\to D^H$ and $C^H\to D^H$ agree; this finishes the proof.
\end{proof}
\end{prop}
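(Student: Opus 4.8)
The plan is to reduce the assertion to the explicit descriptions of pushouts along (equivariant) Dwyer maps provided by Constructions~\ref{constr:pushout-Dwyer} and~\ref{constr:pushout-G-Dwyer}, and then to check that passing to $H$-fixed points sends one explicit model to the other. First I would fix a factorization $i=kf$ of the given $G$-equivariant Dwyer map together with a $G$-equivariant right adjoint $r$ of $f$ whose unit is the identity (so that $\epsilon f=\id$ by the triangle identities), which is possible by Remark~\ref{rk:nice-retraction}. Applying Construction~\ref{constr:pushout-G-Dwyer} to this data, I may assume without loss of generality that $(\ref{diag:pushout-G-Dwyer})$ is literally the square built there; in particular $\Ob D=\Ob C\amalg V$ with $V$ the complement of $\Ob A$ in $\Ob B$, the morphism sets of $D$ are as in $(\ref{eq:hom-pushouts})$, and the $G$-action on $D$ is the one spelled out in the construction. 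On the other hand, by Lemma~\ref{lemma:Dwyer-closure}-$(\ref{item:Dwyer-fixed-points})$ (and its proof) the functor $i^H$ is an ordinary Dwyer map, witnessed by the factorization $i^H=k^Hf^H$ together with the right adjoint $r^H$ and the transformations $\eta^H$, $\epsilon^H$. Construction~\ref{constr:pushout-Dwyer} applied to $C^H\gets A^H\to B^H$ and this restricted data therefore produces a concrete model $D'$ of the pushout in $\cat{Cat}$, and it suffices to identify $D'$ with $D^H$ compatibly with the structure functors.

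For the objects, $\Ob D'=\Ob(C^H)\amalg V^H$, and since the $G$-action on $\Ob D$ preserves the subsets $\Ob C$ and $V$ and restricts to the given actions on them, $H$-fixed points distribute over this disjoint union, giving $\Ob D'=(\Ob D)^H=\Ob(D^H)$. Here it is crucial that $H$ (in fact $G$) is a group: this is what makes the complement $V$ of $\Ob A$ an $H$-stable subset with $V^H$ the complement of $\Ob(A^H)$ in $\Ob(B^H)$, and likewise identifies $X^H$ as the source of $f^H$ and gives $(V\cap X)^H=V^H\cap X^H$. For the morphism sets one takes $H$-fixed points in each of the four clauses of $(\ref{eq:hom-pushouts})$: the clauses $x,y\in C$ and $x,y\in V$ and the empty clause are immediate, while in the mixed clause one uses that $c$ and $r$ are $G$-equivariant, so that for $H$-fixed $y$ the object $cr(y)$ is again $H$-fixed with $cr(y)=c^Hr^H(y)$, whence $\Hom_C(x,cr(y))^H=\Hom_{C^H}(x,c^Hr^H(y))$. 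This matches the morphism sets of $D'$ exactly. It then remains to inspect the composition rules of Construction~\ref{constr:pushout-Dwyer}---all of which are expressed purely in terms of composition in $C$, composition in $B$, and the functor $cr$---and observe that each restricts on $H$-fixed points to the corresponding rule for $D'$; the same inspection shows that $j^H$ and $d^H$ agree with the structure functors of $D'$. Since Construction~\ref{constr:pushout-Dwyer} exhibits $D'$ as a pushout, $(\ref{diag:pushout-G-Dwyer-fixed-points})$ is a pushout, and it is a pushout along the Dwyer map $i^H$.

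I expect the main difficulty to be organizational rather than conceptual: one must keep track of which copy of each Hom-set is meant and check, clause by clause, that every ingredient of the (fairly intricate) composition in Construction~\ref{constr:pushout-Dwyer} is assembled from $G$-equivariant data, so that applying $(\blank)^H$ commutes with all of it---the equivariance of $c$ and $r$ being exactly what is needed for the twisted Hom-set $\Hom_C(x,cr(y))$. The only genuinely load-bearing point is the use of the group structure of $H$ at the level of object sets (to split the disjoint union $\Ob C\amalg V$ into fixed points and to see that the complement $V$ stays $H$-stable)---precisely the place where, as the surrounding discussion notes, the argument would collapse for a general monoid. This is also why it is important to fix the factorization and the adjunction \emph{before} taking fixed points, so that the Dwyer-map structure witnessing $i^H$ is visibly the restriction of the one used to build $D$.
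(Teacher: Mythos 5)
Your proposal is correct and follows essentially the same route as the paper's proof: fix the equivariant factorization and adjunction as in Construction~\ref{constr:pushout-G-Dwyer}, reduce to that explicit model, and identify its $H$-fixed points clause by clause with the output of Construction~\ref{constr:pushout-Dwyer} applied to $i^H$ and the restricted data. The points you flag as load-bearing (equivariance of $c$ and $r$ for the twisted Hom-sets, and the group structure making $V$ stable) are exactly the ones the paper relies on.
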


\begin{cor}\label{cor:pushout-dwyer-nerve}
In the situation of the previous corollary, the induced map
\begin{equation*}
\nerve(B)\amalg_{\nerve(A)}\nerve(C)\to\nerve(D)
\end{equation*}
is an $\mathcal F$-weak equivalence for any collection $\mathcal F$ of subgroups of $G$.
\begin{proof}
Let $H\subset G$ be any subgroup. We have to show that the induced map on $H$-fixed points is a weak equivalence. But this map fits into a commutative diagram
\begin{equation*}
\begin{tikzcd}
\nerve(B^H)\amalg_{\nerve(A^H)}\nerve(C^H) \arrow[d]\arrow[r] & \nerve(D^H)\arrow[d]\\
(\nerve B)^H\amalg_{(\nerve A)^H}(\nerve C)^H \arrow[d]\arrow[r] & (\nerve D)^H\arrow[d, equal]\\
\big(\nerve B\amalg_{\nerve A}\nerve C\big)^H\arrow[r] & (\nerve D)^H
\end{tikzcd}
\end{equation*}
where all maps are induced by the relevant universal properties of colimits and limits. The two top vertical arrows are isomorphisms as $\nerve$ is a right adjoint, and the lower left vertical arrow is an isomorphism as fixed points commute with pushouts along monomorphisms in $\cat{Set}$ and hence in $\cat{SSet}$.

But by the previous corollary, the square $(\ref{diag:pushout-G-Dwyer-fixed-points})$ is a pushout along a Dwyer map, hence the top map is a weak equivalence by the classical non-equivariant statement, see Proposition~\ref{prop:pushout-dwyer-nerve}. The claim follows by $2$-out-of-$3$.
\end{proof}
\end{cor}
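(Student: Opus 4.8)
The plan is to reduce to the non-equivariant Proposition~\ref{prop:pushout-dwyer-nerve} by fixing a subgroup $H\in\mathcal F$ and checking that the comparison map $\nerve B\amalg_{\nerve A}\nerve C\to\nerve D$ becomes a weak homotopy equivalence after applying $(\blank)^H$; by definition of the $\mathcal F$-weak equivalences this suffices. So everything comes down to identifying the $H$-fixed points of this comparison map with the corresponding comparison map for the pushout of $C^H\gets A^H\to B^H$ in $\cat{Cat}$.

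First I would use that $\nerve$ is a right adjoint (to $\h$), hence preserves the limits computing fixed points, so that $(\nerve E)^H\cong\nerve(E^H)$ naturally in the $G$-category $E$; in particular $(\nerve D)^H\cong\nerve(D^H)$ and likewise for $A$, $B$, $C$. Next, since a $G$-equivariant Dwyer map is in particular a sieve, $\nerve(i)$ is a monomorphism of simplicial sets, and $H$-fixed points commute with pushouts along monomorphisms in $\cat{Set}$ — hence degreewise in $\cat{SSet}$ — so that $(\nerve B\amalg_{\nerve A}\nerve C)^H\cong(\nerve B)^H\amalg_{(\nerve A)^H}(\nerve C)^H$. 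Combining these canonical isomorphisms assembles, via the evident universal properties of the relevant (co)limits, into a commutative ladder identifying the map $(\nerve B\amalg_{\nerve A}\nerve C)^H\to(\nerve D)^H$ with $\nerve(B^H)\amalg_{\nerve(A^H)}\nerve(C^H)\to\nerve(D^H)$.

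Now I would invoke Proposition~\ref{prop:pushout-cat-groups}: because $i$ is a $G$-equivariant Dwyer map, the induced square on $H$-fixed points is again a pushout in $\cat{Cat}$ and $i^H$ is an ordinary Dwyer map. Hence Proposition~\ref{prop:pushout-dwyer-nerve} applies to that square and shows that $\nerve(B^H)\amalg_{\nerve(A^H)}\nerve(C^H)\to\nerve(D^H)$ is a weak homotopy equivalence. By the identification of the previous paragraph and $2$-out-of-$3$, the map on $H$-fixed points of the original comparison is a weak equivalence, and since $H\in\mathcal F$ was arbitrary we conclude.

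The essential input is Proposition~\ref{prop:pushout-cat-groups} — that $H$-fixed points of a pushout along a $G$-equivariant Dwyer map are again a pushout along a Dwyer map — which ultimately rests on the explicit Construction~\ref{constr:pushout-G-Dwyer}; everything else here is formal. The one point demanding a little care is verifying that $\nerve$ of the sieve $i$ is a monomorphism, since this is precisely what lets fixed points pass through the simplicial pushout, and it is the reason the argument genuinely needs Dwyer maps rather than arbitrary equivariant functors.
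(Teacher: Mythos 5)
Your proposal is correct and follows essentially the same route as the paper's proof: identify the $H$-fixed points of the comparison map with the comparison map for the fixed-point pushout (using that $\nerve$ is a right adjoint and that fixed points commute with pushouts along monomorphisms), then invoke Proposition~\ref{prop:pushout-cat-groups} and the classical Proposition~\ref{prop:pushout-dwyer-nerve}. Your explicit observation that the sieve condition is what makes $\nerve(i)$ a monomorphism is a point the paper leaves implicit, but the argument is the same.
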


\subsection{Equivariant Dwyer maps for monoids}
Let $M$ be a monoid in $\cat{Cat}$, i.e.~a small strict monoidal category.

\begin{defi}\label{defi:dwyer-monoid}
An $M$-equivariant functor $i\colon C\to D$ is called an \emph{$M$-equi\-vari\-ant Dwyer map} if it is a $\core(\Ob M)$-equivariant Dwyer map in the sense of Definition~\ref{defi:dwyer-group}, where $\core(\Ob M)$ denotes the maximal subgroup of the discrete monoid $\Ob M$ of objects of $M$.
\end{defi}

Slightly expanding the above definition this means that $i$ is a sieve and that we can find a factorization $i=jf$ \emph{into $\core(\Ob M)$-equivariant functors} with certain properties. However, all of the results below will just follow formally from the corresponding results for equivariant Dwyer maps with respect to groups established above.

\begin{cor}\label{cor:pushout-dwyer-monoid}
Let
\begin{equation*}
\begin{tikzcd}
A\arrow[d] \arrow[r, "i"] & B\arrow[d]\\
C\arrow[r] & D
\end{tikzcd}
\end{equation*}
be a pushout in the category $\cat{$\bm M$-Cat}$ of small categories with $M$-action, and assume that $i$ is an $M$-equivariant Dwyer map. Then for any subgroup $H\subset\Ob M$ the induced square
\begin{equation*}\label{diag:pushout-cat-fixed-points}
\begin{tikzcd}
A^H\arrow[d] \arrow[r, "i^H"] & B^H\arrow[d]\\
C^H\arrow[r] & D^H
\end{tikzcd}
\end{equation*}
is a pushout along a Dwyer map in $\cat{Cat}$.
\begin{proof}
As pushouts in both $\cat{$\bm M$-Cat}$ as well as $\cat{$\bm{\core(\Ob M)}$-Cat}$ are created in $\cat{Cat}$, this is immediate from Proposition~\ref{prop:pushout-cat-groups}.
\end{proof}
\end{cor}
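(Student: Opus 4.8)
The plan is to reduce the statement to its group-theoretic counterpart, Proposition~\ref{prop:pushout-cat-groups}, by passing from $M$ to the maximal subgroup $\core(\Ob M)$ of its discrete monoid of objects. First I would unwind Definition~\ref{defi:dwyer-monoid}: by construction, $i$ being an $M$-equivariant Dwyer map means precisely that its underlying $\core(\Ob M)$-equivariant functor is a $\core(\Ob M)$-equivariant Dwyer map in the sense of Definition~\ref{defi:dwyer-group}. So no work is needed on the input data beyond this change of acting object.

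Next I would note that both forgetful functors $\cat{$\bm M$-Cat}\to\cat{Cat}$ and $\cat{$\bm{\core(\Ob M)}$-Cat}\to\cat{Cat}$ create colimits, each category of actions being the algebras over a colimit-preserving monad on $\cat{Cat}$; in particular the restriction-of-action functor $\cat{$\bm M$-Cat}\to\cat{$\bm{\core(\Ob M)}$-Cat}$ preserves pushouts. Applying this to the given square exhibits it as a pushout of $\core(\Ob M)$-categories, with $D$ carrying exactly the $\core(\Ob M)$-action of Construction~\ref{constr:pushout-G-Dwyer} --- both actions being characterised by the requirement that $B\to D$ and $C\to D$ be equivariant, since $D$ is the pushout.

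Finally, any subgroup $H\subset\Ob M$ consists of invertible elements and hence lies in $\core(\Ob M)$; so I may feed the above square and this $H$ into Proposition~\ref{prop:pushout-cat-groups} with $G\mathrel{:=}\core(\Ob M)$. That result immediately gives that the square on $H$-fixed points is a pushout in $\cat{Cat}$ along a Dwyer map, which is the claim. I expect no genuine obstacle here; the only point needing care --- really a formality --- is the identification of the two $\core(\Ob M)$-actions on $D$ in the middle step, after which the corollary is a one-line invocation of the group case.
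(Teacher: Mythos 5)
Your argument is correct and is essentially the paper's own proof: both reduce to Proposition~\ref{prop:pushout-cat-groups} by restricting the action along $\core(\Ob M)\subset\Ob M$ and using that pushouts in $\cat{$\bm M$-Cat}$ and in $\cat{$\bm{\core(\Ob M)}$-Cat}$ are created in $\cat{Cat}$ (so the square stays a pushout of $\core(\Ob M)$-categories), together with the observation that any subgroup $H\subset\Ob M$ lies in $\core(\Ob M)$. The extra care you take in identifying the two actions on $D$ is a harmless formality that the paper leaves implicit.
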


In order to prove the analogue of Corollary~\ref{cor:pushout-dwyer-nerve} we first have to explain how to make $\nerve(C)$ into an $\nerve(M)$-simplicial set for a given $M$-category $C$:

\begin{constr} We lift the adjunction $\h\colon\cat{SSet}\rightleftarrows\cat{Cat} :\!\nerve$ to
\begin{equation*}
\h_{M}\colon\cat{\bm{$\nerve(M)$}-SSet}\rightleftarrows\cat{$\bm{M}$-Cat} :\!\nerve_{M}
\end{equation*}
as follows: on underlying categories or simplicial sets $\h_M$ and $\nerve_M$ agree with $\h$ and $\nerve$, respectively; in particular, this determines their definition on morphisms. If $C$ is an $M$-category, then $\nerve_M(C)=\nerve(C)$ carries the $\nerve(M)$-action given by the composition
\begin{equation*}
\nerve(M)\times\nerve(C)\xrightarrow{\cong}\nerve(M\times C)\xrightarrow{\nerve(\textup{action})}\nerve(C)
\end{equation*}
where the left hand map is the inverse of the canonical isomorphism. Similarly, if $X$ is an $\nerve(M)$-simplicial set, then the $M$-action on $\h_M(X)=\h X$ is given by
\begin{equation*}
M\times\h X\xrightarrow{\cong}\h\nerve(M)\times\h X\xrightarrow{\cong} \h(\nerve(M)\times X)\xrightarrow{\h(\textup{action})}\h X.
\end{equation*}
Here the first map is the inverse of the counit of $\h\dashv\nerve$ (using that $\nerve$ is fully faithful) and the second map is as above (using that $\h$ happens to preserve products).

We omit the easy verification that this is well-defined and that the unit and counit of the original adjunction lift to natural transformations $\id\Rightarrow\nerve_M\h_M$ and $\h_M\nerve_M\Rightarrow\id$, respectively. It then follows formally that these exhibit $\h_M$ as left adjoint of $\nerve_M$. We moreover observe that for any $M$-category $C$ and any $m\in\Ob(M)$ the action map $m.\blank\colon\nerve_M(C)\to\nerve_M(C)$ agrees with $\nerve(m.\blank)$ as a map of simplicial sets.
\end{constr}

With this terminology we can now formulate the desired generalization to categorical monoids:

\begin{cor}\label{cor:pushout-dwyer-monoid-nerve}
Let
\begin{equation*}
\begin{tikzcd}
A\arrow[d] \arrow[r, "i"] & B\arrow[d]\\
C\arrow[r] & D
\end{tikzcd}
\end{equation*}
be a pushout in $\cat{$\bm M$-Cat}$ such that $i$ is an $M$-equivariant Dwyer map. Then the induced map $\nerve_M(B)\amalg_{\nerve_M(A)}\nerve_M(C)\to\nerve_M(D)$ is an $\mathcal F$-weak equivalence for any collection $\mathcal F$ of subgroups of $\nerve(M)_0$.
\begin{proof}
As all the relevant pushouts are created in $\cat{Cat}$ or $\cat{SSet}$, and since the action of any $H\subset\nerve(M)_0$ on $\nerve_M(D)$ is just the one given by functoriality, this follows from Corollary~\ref{cor:pushout-dwyer-nerve}.
\end{proof}
\end{cor}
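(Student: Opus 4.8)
The plan is to reduce everything to the group case, Corollary~\ref{cor:pushout-dwyer-nerve}, exactly as the one-line proof sketch in the excerpt suggests, but being careful about which group acts and via which model the pushout is computed. First I would recall that an $M$-equivariant Dwyer map is by definition a $\core(\Ob M)$-equivariant Dwyer map on underlying categories, and that the $M$-action on a pushout $D$ in $\cat{$\bm M$-Cat}$ is computed via the explicit Construction~\ref{constr:pushout-G-Dwyer} applied to the group $\core(\Ob M)$ (this is legitimate since pushouts in $\cat{$\bm M$-Cat}$ and in $\cat{$\bm{\core(\Ob M)}$-Cat}$ are both created in $\cat{Cat}$, so they agree on underlying categories). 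Hence the square $\nerve_M(B)\amalg_{\nerve_M(A)}\nerve_M(C)\to\nerve_M(D)$ is, on underlying simplicial sets, literally the square $\nerve(B)\amalg_{\nerve(A)}\nerve(C)\to\nerve(D)$ of Corollary~\ref{cor:pushout-dwyer-nerve} for the group $G=\core(\Ob M)$.

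Next I would address the $\mathcal F$-weak equivalence claim. Fix $H\in\mathcal F$, a subgroup of $\nerve(M)_0=\Ob M$; since $H$ is a subgroup it is automatically contained in $\core(\Ob M)$, so $H$-fixed points make sense in the group-equivariant setting. The key observation, already recorded at the end of the preceding construction, is that the action of any $m\in\Ob M$ on $\nerve_M(C)$ is just $\nerve(m.\blank)$, i.e.\ the $\nerve(M)_0$-action on $\nerve_M(C)$ is precisely the one obtained by applying $\nerve$ to the $\Ob M$-action on $C$, which is the same as the action used implicitly in Corollary~\ref{cor:pushout-dwyer-nerve} when restricted to the group $\core(\Ob M)$. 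Therefore $(\nerve_M(X))^H=(\nerve X)^H$ for $X\in\{A,B,C,D\}$, and all the relevant pushouts of simplicial sets and fixed-point functors are computed identically whether we work over $M$ or over $\core(\Ob M)$.

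Putting these two points together: the map in question, after applying $(\blank)^H$, coincides with the map obtained from Corollary~\ref{cor:pushout-dwyer-nerve} (applied to the group $\core(\Ob M)$, the collection consisting of the single subgroup $H$, and the pushout along the $\core(\Ob M)$-equivariant Dwyer map underlying $i$). By that corollary it is a weak homotopy equivalence. As $H\in\mathcal F$ was arbitrary, the map is an $\mathcal F$-weak equivalence in $\cat{$\bm{\nerve(M)}$-SSet}$.

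I expect no real obstacle here; the only thing requiring a moment's care is the bookkeeping of \emph{which} action model is in play — namely checking that the $\nerve(M)_0$-action on the nerve of an $M$-category agrees with $\nerve$ applied to the discrete $\Ob M$-action, so that restriction to the subgroup $H$ and passage to fixed points genuinely land inside the hypotheses of the group-level statement. This is exactly the compatibility noted in the construction immediately preceding the corollary, so the proof is essentially a dereference of that remark together with Corollary~\ref{cor:pushout-dwyer-nerve}.
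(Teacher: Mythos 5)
Your proposal is correct and follows essentially the same route as the paper's own (much terser) proof: reduce to the group case $\core(\Ob M)$ via the definition of $M$-equivariant Dwyer map, use that the relevant pushouts are created in $\cat{Cat}$ and $\cat{SSet}$, and invoke the compatibility of the $\nerve(M)_0$-action on $\nerve_M$ with the functorial action to identify the $H$-fixed points before applying Corollary~\ref{cor:pushout-dwyer-nerve}. The only thing you make explicit that the paper leaves implicit is that a subgroup $H\subset\Ob M$ necessarily lies in $\core(\Ob M)$, which is a correct and useful observation.
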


\begin{cor}\label{cor:homotopy-pushout-dwyer-nerve}
In the situation of the previous corollary, the square
\begin{equation*}
\begin{tikzcd}
\nerve_M(A)\arrow[d] \arrow[r, "\nerve_M(i)"] &[1em] \nerve_M(B)\arrow[d]\\
\nerve_M(C)\arrow[r] & \nerve_M(D)
\end{tikzcd}
\end{equation*}
is a homotopy pushout in the $\mathcal F$-model structure on $\cat{$\bm{\nerve(M)}$-SSet}$ for any collection $\mathcal F$ of finite subgroups of $\nerve(M)_0$.
\begin{proof}
By the previous corollary, the induced map
\begin{equation*}
\nerve_M(B)\amalg_{\nerve_M(A)}\nerve_M(C)\to\nerve_M(D)
\end{equation*}
is a weak equivalence. On the other hand, $i$ is in particular a fully faithful embedding, so that $\nerve_M(i)$ is an underlying cofibration. Proposition~\ref{prop:equiv-model-structure} then implies that the left hand side already represents the homotopy pushout, finishing the proof.
\end{proof}
\end{cor}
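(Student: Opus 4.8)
The plan is to deduce this formally from the two results already established for nerves of $M$-equivariant Dwyer maps, namely Corollary~\ref{cor:pushout-dwyer-monoid-nerve} together with the last clause of Proposition~\ref{prop:equiv-model-structure}. All the genuine combinatorics of Dwyer maps has by now been packaged into the former, so the only thing left to do is to upgrade the statement ``the comparison map out of the strict pushout is an $\mathcal F$-weak equivalence'' to the statement ``the square is a homotopy pushout'', and this is precisely what the good behaviour of the $\mathcal F$-model structure under pushouts along underlying cofibrations is for.

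Concretely, I would argue in three steps. First, I would note that $\nerve_M(i)$ is an underlying cofibration, i.e.\ a monomorphism of simplicial sets: an $M$-equivariant Dwyer map is in particular a sieve, hence a fully faithful functor that is injective on objects, and the nerve of any such functor is levelwise injective. Second, since colimits in $\cat{$\bm{\nerve(M)}$-SSet}$ are created on underlying simplicial sets, the displayed square is the strict pushout of the span $\nerve_M(C)\leftarrow\nerve_M(A)\to\nerve_M(B)$; because $\mathcal F$ consists of finite subgroups and $\nerve_M(i)$ is an underlying cofibration, the relevant clause of Proposition~\ref{prop:equiv-model-structure} applies and shows that this strict pushout already computes the homotopy pushout, i.e.\ the canonical map from the homotopy pushout to $\nerve_M(B)\amalg_{\nerve_M(A)}\nerve_M(C)$ is an $\mathcal F$-weak equivalence. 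Third, Corollary~\ref{cor:pushout-dwyer-monoid-nerve} supplies an $\mathcal F$-weak equivalence $\nerve_M(B)\amalg_{\nerve_M(A)}\nerve_M(C)\to\nerve_M(D)$; composing the two, the canonical map from the homotopy pushout to $\nerve_M(D)$ is an $\mathcal F$-weak equivalence, which is exactly the assertion that the given square is a homotopy pushout.

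I do not expect any serious obstacle here. The one point that deserves a moment's care is the bookkeeping around the strict pushout: one must know that the forgetful functor to $\cat{SSet}$ creates the relevant colimit (so that ``underlying cofibration'' refers to the same map) and that the $\nerve(M)$-action on the pushout $D$ is the functorial one — but both of these are already built into Corollary~\ref{cor:pushout-dwyer-monoid-nerve} and its proof, so nothing new is needed. In short, this corollary should come out as a purely formal consequence of the two inputs cited above.
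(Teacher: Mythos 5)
Your proposal is correct and follows essentially the same route as the paper: identify $\nerve_M(i)$ as an underlying cofibration (so the strict pushout is a homotopy pushout by the last clause of Proposition~\ref{prop:equiv-model-structure}) and then invoke Corollary~\ref{cor:pushout-dwyer-monoid-nerve} to compare the strict pushout with $\nerve_M(D)$. Nothing is missing.
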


Similarly, one generalizes Lemma~\ref{lemma:Dwyer-closure} to all categorical monoids. Below, we will freely refer to Lemma~\ref{lemma:Dwyer-closure} whenever we actually need the corresponding statement for monoid actions.

\subsection{The equivariant \texorpdfstring{$\bm{\Ex}$}{Ex}-functor} In order to construct the equivariant model structure on $\cat{$\bm M$-Cat}$, we will need an equivariant generalization of the usual $\Sd\dashv\Ex$ adjunction, which turns out to be slightly more subtle than in the case of the adjunction $\h\dashv\nerve$, also cf.~\cite[Construction~2.8]{schwede-cat}:

\begin{constr}
Let $N$ be a simplicial monoid. We define $\Ex_N\hskip 0pt minus 1.25pt\colon\hskip 0pt minus .8pt\cat{$\bm N$-SSet}\hskip0pt minus .8pt\to\cat{$\bm N$\hskip 0pt minus 1.2pt-SSet}$ as follows: on underlying simplicial sets, $\Ex_N$ agrees with the usual $\Ex$; in particular, this determines the definition of $\Ex_N$ on morphisms.

If now $X$ is any $N$-simplicial set, then we equip $\Ex_N(X)=\Ex(X)$ with $N$-action given by
\begin{equation*}
N\times\Ex(X)\xrightarrow{e\times\Ex(X)}\Ex(N)\times\Ex(X)\xrightarrow{\cong}\Ex(N\times X)\xrightarrow{\Ex(\textup{action})} \Ex(X);
\end{equation*}
here $e$ is the usual natural transformation $\id\Rightarrow\Ex$ \cite[Section~3]{sd-ex} and the second map is the inverse of the canonical isomorphism. We omit the easy verification that this is well-defined, that the natural transformation $e$ lifts to $e_N\colon\id\Rightarrow\Ex_N$, and that any $n\in N_0$ acts on $\Ex_N(X)=\Ex(X)$ by $\Ex(n.\blank)$.
\end{constr}

\begin{lemma}\label{lemma:ExN-lim-colim}
The functor $\Ex_N$ preserves small limits and filtered colimits.
\begin{proof}
As limits in colimits in $\cat{$\bm N$-SSet}$ are created in $\cat{SSet}$ and as $\Ex_N$ agrees with $\Ex$ on underlying simplicial sets, this is a immediate consequence of the corresponding statement for $\Ex$.
\end{proof}
\end{lemma}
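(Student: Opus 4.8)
The plan is to reduce both claims to the classical facts about the ordinary $\Ex$ functor by passing through the forgetful functor $U\colon\cat{$\bm N$-SSet}\to\cat{SSet}$. First I would record the two non-equivariant inputs. For limits, $\Ex$ is a right adjoint (namely of Kan's subdivision functor $\Sd$, see \cite[Section~7]{sd-ex}) and hence preserves all small limits. For filtered colimits, recall that by construction $(\Ex X)_n=\Hom_{\cat{SSet}}(\Sd\Delta^n,X)$; since $\Sd\Delta^n$ has only finitely many nondegenerate simplices it is a finitely presentable object of $\cat{SSet}$, so $\Hom_{\cat{SSet}}(\Sd\Delta^n,\blank)$ commutes with filtered colimits, and since colimits of simplicial sets are formed levelwise it follows that $\Ex$ preserves filtered colimits.

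Next I would invoke two structural facts already observed in the construction of $\Ex_N$: the forgetful functor $U$ creates all small limits and colimits (they are formed in $\cat{SSet}$, with the $N$-action induced in the evident way), and $U\circ\Ex_N=\Ex\circ U$, since $\Ex_N$ was defined so as to agree with $\Ex$ on underlying simplicial sets. Now let $D\colon\mathcal I\to\cat{$\bm N$-SSet}$ be a small diagram (with $\mathcal I$ arbitrary in the case of limits and filtered in the case of colimits), and let $L$ denote its limit, respectively colimit, in $\cat{$\bm N$-SSet}$, equipped with the canonical cone, respectively cocone. Applying $U$ exhibits $UL$ as the limit, respectively colimit, of $U\circ D$ in $\cat{SSet}$; applying $\Ex$ and using the previous paragraph, $U\Ex_N L=\Ex UL$ is the limit, respectively colimit, of $\Ex\circ U\circ D=U\circ\Ex_N\circ D$. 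Hence $U$ sends the canonical comparison map between $\Ex_N L$ and $\lim(\Ex_N\circ D)$, respectively $\colim(\Ex_N\circ D)$, to an isomorphism, and as $U$ is conservative this comparison map is itself an isomorphism; that is, $\Ex_N$ preserves the (co)limit in question.

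I do not expect any genuine obstacle here: the whole content sits in the two classical inputs about $\Ex$ and in the compatibility $U\circ\Ex_N=\Ex\circ U$, all of which are immediate from the definitions. (One could even bypass the adjointness input for the limit statement by noting that $\Ex_N$ itself admits a left adjoint $\Sd_N$ built in exactly the same fashion; the filtered-colimit statement, however, still genuinely uses the finiteness of $\Sd\Delta^n$.)
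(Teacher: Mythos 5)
Your argument is correct and is essentially the paper's own proof: both reduce to the classical facts about $\Ex$ using that (co)limits in $\cat{$\bm N$-SSet}$ are created in $\cat{SSet}$ and that $\Ex_N$ agrees with $\Ex$ on underlying simplicial sets. Only your final parenthetical would be circular in the paper's logical order, since there $\Sd_N$ is \emph{obtained} from this lemma via the Special Adjoint Functor Theorem rather than constructed by hand; but your main argument does not rely on it.
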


The Special Adjoint Functor Theorem implies:

\begin{cor}
$\Ex_N$ admits a left adjoint $\Sd_N$.\qed
\end{cor}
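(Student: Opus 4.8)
The statement to be proven is that $\Ex_N$ admits a left adjoint $\Sd_N$, and the parenthetical remark "The Special Adjoint Functor Theorem implies" already signals the intended route. The plan is to verify the hypotheses of the (opposite form of the) Special Adjoint Functor Theorem: a functor between suitable categories which preserves all small limits has a left adjoint, provided the source category is complete, well-powered, and has a cogenerating set. So first I would note that $\cat{$\bm N$-SSet}$ is a presheaf category — equivalently, the category of simplicial objects in $N\textup{-}\cat{Set}$, or the functor category from the monoid $N$ (viewed as a one-object simplicial category, or rather its category of elements) to $\cat{SSet}$ — hence in particular it is locally presentable, and therefore complete, cocomplete, well-powered and co-well-powered with a small (co)generating set. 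By Lemma~\ref{lemma:ExN-lim-colim}, $\Ex_N$ preserves all small limits. The Special Adjoint Functor Theorem then immediately produces the left adjoint $\Sd_N$.

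Alternatively — and this is perhaps cleaner given that the paper works in the locally presentable setting throughout (cf.~Proposition~\ref{prop:transfer-criterion}) — one can invoke the adjoint functor theorem for locally presentable categories: a functor between locally presentable categories has a left adjoint if and only if it preserves all small limits \emph{and} is accessible (i.e.~preserves sufficiently filtered colimits). Both halves are supplied by Lemma~\ref{lemma:ExN-lim-colim}: limit-preservation is the first half, and the second half follows since $\Ex_N$ preserves all filtered colimits, hence is finitely accessible and a fortiori accessible. Since $\cat{$\bm N$-SSet}$ is locally presentable (being a presheaf category, or a category of models of a limit sketch), this version of the adjoint functor theorem applies verbatim. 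This is likely the argument the author has in mind when writing "$\Ex_N$ preserves small limits and filtered colimits" in the lemma rather than just "small limits".

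There is essentially no obstacle here; the only thing to be careful about is having cited \emph{which} adjoint functor theorem and confirming its hypotheses rather than waving at "SAFT". In writing the actual proof I would simply say: the category $\cat{$\bm N$-SSet}$ is locally presentable; $\Ex_N$ preserves small limits and filtered colimits by Lemma~\ref{lemma:ExN-lim-colim}, hence is in particular accessible; therefore it admits a left adjoint, which we denote $\Sd_N$. One could add that, as in the unenriched case, $\Sd_N$ will agree with the usual $\Sd$ on underlying simplicial sets — this follows formally because the forgetful functor $\cat{$\bm N$-SSet}\to\cat{SSet}$ has a left adjoint $N\times(\blank)$ and $\Ex_N$ commutes with the forgetful functors, so that the left adjoints commute up to the expected natural isomorphism — but this observation is not needed for the bare existence statement and can be deferred.
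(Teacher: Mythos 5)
Your proposal is correct and takes the same route as the paper, which simply cites the Special Adjoint Functor Theorem immediately after Lemma~\ref{lemma:ExN-lim-colim}; your verification of its hypotheses (local presentability of $\cat{$\bm N$-SSet}$, limit preservation from the lemma) is exactly what is implicit there. The alternative via the adjoint functor theorem for accessible functors is a fine equivalent packaging, though note the filtered-colimit half of Lemma~\ref{lemma:ExN-lim-colim} is also needed later for the transfer criterion, so its presence in the lemma is not solely a hint toward that version.
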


Let us fix such an adjunction for the rest of this article.

\begin{constr}\label{constr:dN}
Let $X\in\cat{$\bm N$-SSet}$ be arbitary. We define $d_N\colon \Sd_NX\to X$ to be the adjunct of $e_N\colon X\to \Ex_NX$, i.e.~we have commutative diagrams
\begin{equation*}
\begin{tikzcd}
\Sd_NX\arrow[dr, "d_N"', bend right=10pt]\arrow[r, "\Sd_Ne_n"] &[1em] \Sd_N\Ex_NX\arrow[d, "\epsilon"]\\
&X
\end{tikzcd}
\qquad\text{and}\qquad
\begin{tikzcd}
X\arrow[r, "\eta"]\arrow[dr, "e_N"', bend right=10pt] & \Ex_N\Sd_NX\arrow[d, "\Ex_Nd_N"]\\
& \Ex_NX.
\end{tikzcd}
\end{equation*}
Obviously, the $d_N$ assemble into a natural transformation $d_N\colon\Sd_N\Rightarrow\id$, and this is by definition the total mate of the square
\begin{equation}\label{diag:def-dN}
\begin{tikzcd}
\cat{$\bm N$-SSet} & \arrow[l, "\id"']\cat{$\bm N$-SSet}\twocell[dl, "\scriptscriptstyle e_N"{xshift=2.5pt, yshift=-7.5pt}]\\
\cat{$\bm N$-SSet}\arrow[u, "\Ex_N"] & \arrow[l, "\id"]\cat{$\bm N$-SSet}\arrow[u, "\id"']
\end{tikzcd}
\end{equation}
(picking the trivial adjunctions for all the identity arrows).
\end{constr}

We now turn to some properties of the adjunction $\Sd_N\dashv\Ex_N$ as well as the natural transformations $d_N$ and $e_N$:

\begin{lemma}\label{lemma:ExN-homotopical}
\begin{enumerate}
\item The functor $\Ex_N$ preserves $\mathcal F$-weak equivalences for any collection $\mathcal F$ of subgroups of $N_0$.
\item The natural transformation $e_N$ is a levelwise weak equivalence.
\end{enumerate}
\begin{proof}
By $2$-out-of-$3$ it suffices to prove the second statement. For this we have to show that for each $H\subset N_0$ and each $X\in\cat{$\bm N$-SSet}$ the map $(e_N)^H\colon X^H\to\Ex_N(X)^H$ is a weak equivalence. However, we have identified the $H$-action on $\Ex_N(X)=\Ex(X)$ as the one induced by functoriality. As $\Ex$ is a right adjoint, it preserves limits, so we have a canonical isomorphism $\sigma\colon\Ex(X^H)\cong\Ex(X)^H$. Naturality and the universal property of limits then imply that the composition
\begin{equation*}
X^H\xrightarrow{e}\Ex(X^H)\xrightarrow{\sigma} \Ex(X)^H
\end{equation*}
(where the left hand map is the \emph{ordinary} $e\colon\id\Rightarrow\Ex$ evaluated at $X^H$) agrees with $e^H=(e_N)^H$. Hence the claim follows from the fact that the ordinary $e$ is a levelwise weak equivalence \cite[Lemma~7.4]{sd-ex}.
\end{proof}
\end{lemma}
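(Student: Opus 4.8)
The plan is to deduce part~(1) from part~(2) and to reduce part~(2) to the classical fact that the non-equivariant natural transformation $e\colon\id\Rightarrow\Ex$ is a levelwise weak equivalence \cite[Lemma~7.4]{sd-ex}.

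First I would note that part~(2) implies part~(1) by $2$-out-of-$3$: for any $\mathcal F$-weak equivalence $f\colon X\to Y$ in $\cat{$\bm N$-SSet}$, naturality of $e_N$ yields a commutative square whose two vertical maps are $(e_N)_X$ and $(e_N)_Y$ and whose horizontal maps are $f$ and $\Ex_N(f)$; once we know that $(e_N)_X$ and $(e_N)_Y$ are levelwise weak equivalences (hence weak equivalences on all $H$-fixed points, in particular $\mathcal F$-weak equivalences), it follows that $\Ex_N(f)$ is an $\mathcal F$-weak equivalence as well. So everything comes down to showing that $(e_N)_X^H\colon X^H\to\Ex_N(X)^H$ is a weak equivalence of simplicial sets for every $N$-simplicial set $X$ and every subgroup $H\subset N_0$.

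The key step is then to unwind $\Ex_N(X)^H$. By the construction of $\Ex_N$, every $n\in N_0$ acts on $\Ex_N(X)=\Ex(X)$ by $\Ex(n.\blank)$; hence the $H$-action on $\Ex_N(X)$ is simply the image under the functor $\Ex$ of the $H$-action on $X$. Since $\Ex$ is a right adjoint (to $\Sd$), it preserves limits, so the canonical comparison map $\sigma\colon\Ex(X^H)\to\Ex(X)^H$ for the limit defining $(\blank)^H$ is an isomorphism. Naturality of the ordinary $e$ together with the universal property of this limit then identifies $(e_N)_X^H$ with the composite $X^H\xrightarrow{\ e\ }\Ex(X^H)\xrightarrow{\ \sigma\ }\Ex(X)^H$, where $e$ is now the non-equivariant transformation evaluated at the simplicial set $X^H$. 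As $e$ is a levelwise weak equivalence and $\sigma$ is an isomorphism, $(e_N)_X^H$ is a weak equivalence, as desired.

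I do not expect a genuine obstacle here; the only thing that needs care is the bookkeeping showing that the $H$-action on $\Ex_N(X)$ coincides with the one obtained by applying $\Ex$ to the $H$-action on $X$, and that $\sigma\circ e$ really equals $(e_N)_X^H$. Both are formal consequences of the explicit description of $\Ex_N$ and of the fact that $\Ex$, being a right adjoint, commutes with the limits computing fixed points. The essential external input is just that the classical Kan map $e\colon\id\Rightarrow\Ex$ is a weak equivalence.
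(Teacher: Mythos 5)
Your argument is correct and follows essentially the same route as the paper: reduce (1) to (2) via naturality and $2$-out-of-$3$, identify the $H$-action on $\Ex_N(X)$ as the functorial one, use that $\Ex$ preserves limits to identify $(e_N)_X^H$ with $\sigma\circ e$, and invoke Kan's classical result that $e$ is a levelwise weak equivalence. No gaps.
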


\begin{lemma}\label{lemma:SdN-repr}
Let $H\subset N_0$ be any subgroup. There is a natural isomorphism $\tau$ filling
\begin{equation*}
\begin{tikzcd}
\cat{SSet} \arrow[r, "N/H\times\blank"]\arrow[d, "\Sd"'] &[1em] \cat{$\bm N$-SSet}\arrow[d, "\Sd_N"]\\
\cat{SSet} \twocell[ur, "\scriptscriptstyle \tau"{xshift=2.5pt, yshift=-7pt}] \arrow[r, "N/H\times\blank"'] & \cat{$\bm N$-SSet},
\end{tikzcd}
\end{equation*}
and moreover $\tau$ can be chosen in such a way that for each $K\in\cat{SSet}$ the diagram
\begin{equation*}
\begin{tikzcd}
N/H\times\Sd K \arrow[r, "\tau", "\cong"']\arrow[dr, "N/H\times d"', bend right=10pt] &\Sd_N(N/H\times K)\arrow[d, "d_N"]\\
& N/H\times K
\end{tikzcd}
\end{equation*}
commutes (where $d$ as usual denotes the adjunct of $e$).
\begin{proof}
We recall from the proof of the previous lemma that we have a natural isomorphism filling
\begin{equation*}
\begin{tikzcd}
\cat{SSet} & \arrow[l, "(\blank)^H"'] \cat{$\bm{N}$-SSet}\twocell[dl]\\
\cat{SSet}\arrow[u, "\Ex"] & \arrow[l, "(\blank)^H"]\cat{$\bm{N}$-SSet},\arrow[u, "\Ex_N"']
\end{tikzcd}
\end{equation*}
namely the inverse of the canonical comparison map. We take $\tau$ to be the total mate of this, which is a natural isomorphism
\begin{equation*}
N/H\times\Sd(\blank)\cong\Sd_N(N/H\times\blank).
\end{equation*}
It remains to prove the compatibility of $\tau$ with $d$ and $d_N$. For this we observe that $N/H\times d$ is by definition and the compatibility of mates with pastings the total mate of
\begin{equation*}
\begin{tikzcd}
\cat{SSet} & \arrow[l, "="'] \cat{SSet}\twocell[dl, "\scriptscriptstyle e"{xshift=4pt,yshift=-3.5pt}] & \arrow[l, "(\blank)^H"'] \cat{$\bm N$-SSet}\twocell[dl, "\scriptscriptstyle="{xshift=7pt,yshift=-3.5pt}]\\
\cat{SSet}\arrow[u, "\Ex"] & \arrow[l, "="] \arrow[u, "="'] \cat{SSet} & \arrow[l, "(\blank)^H"]\arrow[u, "="'] \cat{$\bm N$-SSet}.
\end{tikzcd}
\end{equation*}
But on the other hand this pasting agrees by the proof of the previous lemma with
\begin{equation*}
\begin{tikzcd}
\cat{SSet} & \arrow[l, "(\blank)^H"'] \cat{$\bm N$-SSet}\twocell[dl, "\scriptscriptstyle\text{can}^{-1}"{xshift=11pt,yshift=-3.5pt}] & \arrow[l, "="'] \cat{$\bm N$-SSet}\twocell[dl, "\scriptscriptstyle e_N"{xshift=7pt,yshift=-5pt}]\\
\cat{SSet}\arrow[u, "\Ex"] & \arrow[l, "(\blank)^H"] \arrow[u, "\Ex_N"'] \cat{$\bm N$-SSet} & \arrow[l, "="]\arrow[u, "="'] \cat{$\bm N$-SSet}
\end{tikzcd}
\end{equation*}
whose total mate is (again using compatibility of mates with pasting) precisely $d_N\circ\tau$. This finishes the proof.
\end{proof}
\end{lemma}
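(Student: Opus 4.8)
The plan is to obtain $\tau$ as a \emph{mate}. The proof of Lemma~\ref{lemma:ExN-homotopical} already supplies the canonical natural isomorphism
$\sigma\colon(\blank)^H\circ\Ex_N\cong\Ex\circ(\blank)^H$
of functors $\cat{$\bm N$-SSet}\to\cat{SSet}$ (the inverse of the comparison map $\Ex(Y^H)\to\Ex(Y)^H$, which is invertible because $\Ex$ is a right adjoint and hence preserves the limit computing $(\blank)^H$). I would combine this with the adjunctions $\Sd\dashv\Ex$, $\Sd_N\dashv\Ex_N$, and $N/H\times\blank\dashv(\blank)^H$ — the last one holding both with $N/H\times\blank\colon\cat{SSet}\to\cat{$\bm N$-SSet}$ and with $N/H\times\blank$ an endofunctor of $\cat{$\bm N$-SSet}$, and being the adjunction implicit in the description of the generating (acyclic) cofibrations in Proposition~\ref{prop:equiv-model-structure}. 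Putting one adjunction on each of the four edges of the square filled by $\sigma$ (with $N/H\times\blank\dashv(\blank)^H$ on both of its $(\blank)^H$-edges), I take $\tau$ to be the total mate of $\sigma$; unwinding definitions, this is a natural transformation $(N/H\times\blank)\circ\Sd\Rightarrow\Sd_N\circ(N/H\times\blank)$, i.e.~a map $\tau_K\colon N/H\times\Sd K\to\Sd_N(N/H\times K)$ natural in $K\in\cat{SSet}$, as required.

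Since all four edges of the square are flipped, this total mate can equivalently be read off from the chain of natural bijections
\begin{align*}
\Hom_{\cat{$\bm N$-SSet}}(N/H\times\Sd K,\,Y)&\cong\Hom_{\cat{SSet}}(\Sd K,\,Y^H)\cong\Hom_{\cat{SSet}}(K,\,\Ex(Y^H))\\
&\cong\Hom_{\cat{SSet}}(K,\,(\Ex_NY)^H)\cong\Hom_{\cat{$\bm N$-SSet}}(N/H\times K,\,\Ex_NY)\cong\Hom_{\cat{$\bm N$-SSet}}(\Sd_N(N/H\times K),\,Y),
\end{align*}
natural in $Y\in\cat{$\bm N$-SSet}$ (the middle step induced by $\sigma$), followed by the Yoneda lemma. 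Every step of the chain is a bijection — the middle one precisely because $\sigma$ is an isomorphism — so $\tau_K$ is an isomorphism; equivalently, this is an instance of the general principle that a total mate of an invertible 2-cell is again invertible.

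It remains to check the compatibility $d_N\circ\tau=N/H\times d$ of natural transformations $(N/H\times\blank)\circ\Sd\Rightarrow N/H\times\blank$, and I would do this by exhibiting both sides as the total mate of one and the same pasting of 2-cells, using throughout that mate-formation commutes with pasting and whiskering and sends identity squares to identity squares. By Construction~\ref{constr:dN} the square $(\ref{diag:def-dN})$ carrying $e_N$ has total mate $d_N$, and its evident $\cat{SSet}$-analogue carrying $e$ has total mate $d$ (the adjunct of $e$); hence $N/H\times d$ is the total mate of the pasting of the $e$-square with the identity square on $(\blank)^H$, i.e.~of $e$ whiskered by $(\blank)^H$, a 2-cell $(\blank)^H\Rightarrow\Ex\circ(\blank)^H$. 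Now the proof of Lemma~\ref{lemma:ExN-homotopical} identifies this whiskered 2-cell with the composite $(\blank)^H\Rightarrow(\blank)^H\circ\Ex_N\Rightarrow\Ex\circ(\blank)^H$ of $(e_N)^H$ (that is, $e_N$ whiskered by $(\blank)^H$) with $\sigma$ — equivalently, with the pasting of the $e_N$-square of $(\ref{diag:def-dN})$ against the $\sigma$-square that defines $\tau$. Taking the total mate of this second presentation and again invoking compatibility with pasting, the mate of the $e_N$-square is $d_N$ and the mate of the $\sigma$-square is $\tau$, so the total mate comes out as $d_N\circ\tau$. Comparing the two presentations yields $d_N\circ\tau=N/H\times d$.

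The entire argument is formal: there is no computation with simplices, subdivisions, or $\Ex$ to carry out. The step I expect to cost the most effort is the bookkeeping with mates — keeping track of which of the four adjunctions is flipped along which edge, and of the direction in which each 2-cell points — together with faithfully importing the two facts established inside the proof of Lemma~\ref{lemma:ExN-homotopical}, namely that the comparison $\Ex(Y^H)\to\Ex(Y)^H$ is an isomorphism and that $(e_N)^H$ factors through it via the ordinary $e$. The one substantive observation is that it is the \emph{total} mate, flipping all four edges at once, that both makes $\tau$ automatically invertible and reduces the compatibility with $d$ to an identity of pasting diagrams.
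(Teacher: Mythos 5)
Your proposal is correct and follows essentially the same route as the paper: you define $\tau$ as the total mate of the canonical isomorphism $(\blank)^H\circ\Ex_N\cong\Ex\circ(\blank)^H$ from the proof of Lemma~\ref{lemma:ExN-homotopical}, and you verify the compatibility $d_N\circ\tau=N/H\times d$ by exhibiting both sides as total mates of the same pasting, exactly as in the paper. The explicit hom-set chain you give for $\tau$ is a nice concrete unwinding of the same construction, not a different argument.
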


\begin{cor}\label{cor:SdN-ExN-unit-we}
Let $X\in\cat{$\bm N$-SSet}$ be isomorphic to $N/H\times K$ for some $K\in\cat{SSet}$ and some subgroup $H\subset N_0$. Then:
\begin{enumerate}
\item $d_N\colon \Sd_NX\to X$ is an $\mathcal F$-weak equivalence for any collection $\mathcal F$ of subgroups of $N_0$.
\item $\eta\colon X\to\Ex_N\Sd_NX$ is a weak equivalence.
\end{enumerate}
\begin{proof}
By naturality we may assume without loss of generality that $X$ is actually equal to $N/H\times K$. For the first statement we then simply invoke the previous lemma together with the fact that $d\colon\Sd K\to K$ is an ordinary weak equivalence \cite[Lemma~7.5]{sd-ex}.

For the second statement we consider the commutative diagram
\begin{equation*}
\begin{tikzcd}
X\arrow[r, "\eta"]\arrow[dr, "e_N"', bend right=10pt] & \Ex_N\Sd_NX\arrow[d, "\Ex_Nd_N"]\\
& \Ex_NX
\end{tikzcd}
\end{equation*}
from Construction~\ref{constr:dN}. The first part together with Lemma~\ref{lemma:ExN-homotopical} implies that both $\Ex_Nd_N$ and $e_N$ are weak equivalences; the claim follows by $2$-out-of-$3$.
\end{proof}
\end{cor}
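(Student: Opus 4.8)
The plan is to reduce both assertions to the non-equivariant facts about Kan's $\Sd\dashv\Ex$ adjunction, using the representability result of Lemma~\ref{lemma:SdN-repr} together with the homotopical behaviour of $\Ex_N$ recorded in Lemma~\ref{lemma:ExN-homotopical}.

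For the first part, I would first invoke naturality of $d_N$ (Construction~\ref{constr:dN}) together with the closure of weak equivalences under composition with isomorphisms to replace the given isomorphism $X\cong N/H\times K$ by an equality, so that it suffices to treat $X=N/H\times K$. Lemma~\ref{lemma:SdN-repr} then supplies a natural isomorphism $\tau\colon N/H\times\Sd K\xrightarrow{\cong}\Sd_N(N/H\times K)$ satisfying $d_N\circ\tau=N/H\times d$, where $d\colon\Sd K\to K$ is the ordinary natural transformation. Hence $d_N$ is isomorphic, as an arrow, to $N/H\times d$, and it remains to see that the latter is an $\mathcal F$-weak equivalence. For $L\in\mathcal F$ the map $(N/H\times d)^L$ is $(N/H)^L\times d$, since $L$ acts only on the first factor; and this is a weak equivalence of simplicial sets because $d$ is one \cite[Lemma~7.5]{sd-ex} and products with a fixed simplicial set preserve weak equivalences in $\cat{SSet}$.

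For the second part, I would run the commutative triangle of Construction~\ref{constr:dN}, which factors $e_N\colon X\to\Ex_NX$ as $(\Ex_Nd_N)\circ\eta$. By the first part, $d_N$ is an $\mathcal F$-weak equivalence for every $\mathcal F$, so $\Ex_Nd_N$ is one as well by the first statement of Lemma~\ref{lemma:ExN-homotopical}; and $e_N$ is a weak equivalence by the second statement. The claim then follows by $2$-out-of-$3$.

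The only step that requires genuine care is the identification of $d_N$ with $N/H\times d$ under $\tau$ in the first part: mere existence of $\tau$ does not suffice, one needs exactly the compatibility clause of Lemma~\ref{lemma:SdN-repr}, whose proof in turn rests on the bookkeeping with mates carried out there. Everything else is formal once Lemmas~\ref{lemma:SdN-repr} and~\ref{lemma:ExN-homotopical} are in hand.
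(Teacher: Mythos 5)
Your proposal is correct and follows essentially the same route as the paper: reduce to $X=N/H\times K$ by naturality, identify $d_N$ with $N/H\times d$ via the compatibility clause of Lemma~\ref{lemma:SdN-repr}, and deduce the second part from the triangle of Construction~\ref{constr:dN} using Lemma~\ref{lemma:ExN-homotopical} and $2$-out-of-$3$. The only difference is that you spell out the fixed-point computation $(N/H\times d)^L=(N/H)^L\times d$ explicitly, which the paper leaves implicit.
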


\subsection{Categories vs.~simplicial sets} Let $M$ be a monoid in $\cat{Cat}$ and let $\mathcal F$ be a collection of finite subgroups of $\Ob M$, which we will confuse with subgroups of $(\nerve M)_0$. In order to construct the desired model structure on $\cat{$\bm M$-Cat}$ together with a Quillen equivalence to $\cat{$\bm M$-SSet}$, we will need the following mild technical condition:

\begin{defi}\label{defi:good-subgroup}
A subgroup $H\subset\Ob(M)$ is called \emph{good} if the right $H$-action on $M$ given by right multiplication is free. A collection $\mathcal F$ of subgroups of $\Ob M$ is called \emph{good}, if all $H\in\mathcal F$ are good.
\end{defi}

\begin{ex}
If $\Ob(M)=G$ is a group, then any subgroup $H\subset G$ is good.
\end{ex}

\begin{ex}
Every subgroup of $\Ob(E\mathcal M)=\mathcal M$ is good as injections of sets are monomorphisms. More generally, all subgroups of $\Ob(E\mathcal M\times G)$ are good.
\end{ex}

\begin{thm}\label{thm:cat-vs-sset}
Let $\mathcal F$ be a good collection of finite subgroups of $\Ob M$. Then there exists a unique model structure on $\cat{$\bm M$-Cat}$ such that a map $f\colon C\to D$ is a weak equivalence or fibration if and only if for each $H\in\mathcal F$ the map $f^H\colon C^H\to D^H$ is a weak equivalence or fibration, respectively, in the Thomason model structure.

This model structure is combinatorial with generating cofibrations
\begin{equation*}
\{M/H\times\h\Sd^2\del\Delta^n\hookrightarrow M/H\times\h\Sd^2\Delta^n : n\ge 0,\text{ }H\in\mathcal F\}
\end{equation*}
and generating acyclic cofibrations
\begin{equation*}
\{M/H\times\h\Sd^2\Lambda^n_k\hookrightarrow M/H\times\h\Sd^2\Delta^n : 0\le k\le n,\text{ }H\in\mathcal F\}.
\end{equation*}
Moreover, it is proper with homotopy pushouts and pullbacks created by $\nerve_M$, and filtered colimits in it are homotopical.

Finally, the adjunction
\begin{equation}\label{eq:Quillen-equiv-cat-sset}
\h_M\Sd^2_{\nerve(M)}\colon\cat{$\bm{\nerve(M)}$-SSet}\rightleftarrows\cat{$\bm M$-Cat}:\! \Ex^2_{\nerve(M)}\nerve_M
\end{equation}
is a Quillen equivalence when we equip $\cat{$\bm{\nerve(M)}$-SSet}$ with the $\mathcal F$-model structure (viewing the elements of $\mathcal F$ as subgroups of $\nerve(M)_0$ now).
\end{thm}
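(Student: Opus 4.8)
The plan is to obtain the asserted model structure as the one transferred along the adjunction $(\ref{eq:Quillen-equiv-cat-sset})$ from the $\mathcal F$-model structure on $\cat{$\bm{\nerve(M)}$-SSet}$, and to verify the hypotheses of Proposition~\ref{prop:transfer-criterion} in its stronger form, so that the Quillen equivalence comes out for free. Accordingly I take $\mathscr C=\cat{$\bm{\nerve(M)}$-SSet}$ (combinatorial, proper, and with homotopical filtered colimits by Proposition~\ref{prop:equiv-model-structure}) with its standard generating cofibrations $I=\{\nerve(M)/H\times(\del\Delta^n\hookrightarrow\Delta^n)\}$ and generating acyclic cofibrations $J=\{\nerve(M)/H\times(\Lambda^n_k\hookrightarrow\Delta^n)\}$, $\mathscr D=\cat{$\bm M$-Cat}$ (locally presentable), $F=\h_M\Sd^2_{\nerve(M)}$ and $U=\Ex^2_{\nerve(M)}\nerve_M$. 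The computation used throughout is that iterating Lemma~\ref{lemma:SdN-repr} and using that $\h$ preserves finite products and all colimits (so that in particular $\h(\nerve(M)/H)\cong M/H$) produces a natural isomorphism $F(\nerve(M)/H\times K)\cong M/H\times\h\Sd^2 K$; thus $FI$ and $FJ$ are exactly the two families written down in the theorem. That a model structure satisfies the condition of the theorem precisely when it is this transferred structure is immediate from Theorem~\ref{thm:thomason}: a functor $g$ is a Thomason weak equivalence, resp.\ fibration, exactly when $\Ex^2\nerve(g)$ is a weak homotopy equivalence, resp.\ Kan fibration, so $U$ detects the weak equivalences and fibrations of the statement; in particular uniqueness follows once existence is established.

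Condition~$(\ref{item:tp-filtered})$ holds strictly, as $\nerve$ and $\Ex_{\nerve(M)}$ (Lemma~\ref{lemma:ExN-lim-colim}) both preserve filtered colimits. For condition~$(\ref{item:tp-unit-we})$: the (co)domains of the maps in $J$ are cofibrant, being assembled out of $\varnothing$ from maps $\nerve(M)/H\times(\del\Delta^m\hookrightarrow\Delta^m)$, and $\eta_\varnothing=\id_\varnothing$ since $F$ preserves initial objects. It remains to show that the unit $\eta$ of $F\dashv U$ is an $\mathcal F$-weak equivalence at $\nerve(M)/H\times\del\Delta^n$ and at $\nerve(M)/H\times\Delta^n$. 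Decomposing $\eta$ along the composite of the adjunctions $\Sd^2_{\nerve(M)}\dashv\Ex^2_{\nerve(M)}$ and $\h_M\dashv\nerve_M$, the unit of the former is an $\mathcal F$-weak equivalence at such an object by iterating Corollary~\ref{cor:SdN-ExN-unit-we} (using Lemma~\ref{lemma:SdN-repr} to see that $\Sd_{\nerve(M)}$ of it is again of the form $\nerve(M)/H\times(\blank)$, and that $\Ex_{\nerve(M)}$ is homotopical). The unit of the latter, evaluated at $\nerve(M)/H\times\Sd^2 K$ for $K\in\{\del\Delta^n,\Delta^n\}$, is --- since $\h$ and $\nerve$ preserve products --- a product of $\eta^{\h\nerve}_{\nerve(M)/H}$ (an isomorphism, because $H$ is good, so that $\nerve$ commutes with the free $H$-quotient defining $\nerve(M)/H$) and of $\eta^{\h\nerve}_{\Sd^2 K}$ (a weak homotopy equivalence, since applying to it the functor $\Ex^2$, which preserves and reflects weak equivalences as $e\colon\id\Rightarrow\Ex$ is one, recovers up to a weak equivalence the unit of the Quillen equivalence $(\ref{eq:thomason-adjunction})$ at the cofibrant object $K$ by Theorem~\ref{thm:thomason}). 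Applying $\Ex^2_{\nerve(M)}$, which is homotopical by Lemma~\ref{lemma:ExN-homotopical}, then finishes the step.

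The essential point is condition~$(\ref{item:tp-pushout})$, and it is here that the Dwyer-map theory is indispensable, since neither $\nerve_M$ nor $\Ex_{\nerve(M)}$ preserves pushouts. Given a pushout in $\cat{$\bm M$-Cat}$ whose top edge is $F$ of a generating cofibration --- hence isomorphic to $M/H\times\h\Sd^2(\del\Delta^n\hookrightarrow\Delta^n)$ --- the map $\h\Sd^2(\del\Delta^n\hookrightarrow\Delta^n)$ is a Dwyer map by Proposition~\ref{prop:h-sd-cofib}, so its product with $M/H$ is an $M$-equivariant Dwyer map by Remark~\ref{rk:ordinary-dwyer} and the monoid form of Lemma~\ref{lemma:Dwyer-closure}$(\ref{item:Dwyer-product})$. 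Corollary~\ref{cor:homotopy-pushout-dwyer-nerve} then shows that applying $\nerve_M$ to the square yields a homotopy pushout in the $\mathcal F$-model structure on $\cat{$\bm{\nerve(M)}$-SSet}$, and it remains to see that $\Ex^2_{\nerve(M)}$ preserves this. Since homotopy pushouts in the $\mathcal F$-model structure are detected on $H$-fixed points (Proposition~\ref{prop:equiv-model-structure}) and $(\Ex_{\nerve(M)}X)^H\cong\Ex(X^H)$ naturally (as in the proof of Lemma~\ref{lemma:ExN-homotopical}), this reduces to the assertion that the ordinary functor $\Ex$ preserves homotopy pushouts of simplicial sets; but $\Ex$ is a left Quillen endofunctor of $\cat{SSet}_{\textup{Kan-Quillen}}$, as it preserves monomorphisms (being a right adjoint) and weak equivalences (since $e$ is a levelwise weak equivalence) and hence acyclic cofibrations, and left Quillen functors preserve homotopy pushouts. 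This establishes condition~$(\ref{item:tp-pushout})$.

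By Proposition~\ref{prop:transfer-criterion} we conclude that the transferred model structure on $\cat{$\bm M$-Cat}$ exists and is combinatorial with generating (acyclic) cofibrations $FI$ and $FJ$ as displayed, that it is left proper with homotopy pushouts created by $U$ and has homotopical filtered colimits, that --- $\cat{$\bm{\nerve(M)}$-SSet}$ being right proper --- it is right proper with homotopy pullbacks created by $U$, and that $(\ref{eq:Quillen-equiv-cat-sset})$ is a Quillen equivalence; by the remark in the first paragraph this is the model structure of the statement, and it is unique. To replace `created by $U$' by `created by $\nerve_M$' in the statements about homotopy (co)limits, note that $e_{\nerve(M)}\colon\id\Rightarrow\Ex_{\nerve(M)}$ is a levelwise, hence $\mathcal F$-, weak equivalence by Lemma~\ref{lemma:ExN-homotopical}, so the comparison $Z\to\Ex^2_{\nerve(M)}Z$ is an objectwise $\mathcal F$-weak equivalence for every commutative square $Z$; since being a homotopy pushout or pullback is invariant under objectwise weak equivalences of squares, $\nerve_M$ of a square in $\cat{$\bm M$-Cat}$ is a homotopy pushout, resp.\ pullback, if and only if its $U$-image is. This completes the proof.
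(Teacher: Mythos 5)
Your overall strategy is the same as the paper's: transfer along $\h_M\Sd^2_{\nerve(M)}\dashv\Ex^2_{\nerve(M)}\nerve_M$ and verify conditions (\ref{item:tp-unit-we}), (\ref{item:tp-pushout}), (\ref{item:tp-filtered}) of Proposition~\ref{prop:transfer-criterion}, with the Dwyer-map machinery (Corollary~\ref{cor:homotopy-pushout-dwyer-nerve}) carrying the pushout condition, and the identifications via Lemma~\ref{lemma:SdN-repr} and goodness of $H$ giving the stated generating sets. Two of your micro-arguments differ from the paper's. First, for the unit of $\h_M\dashv\nerve_M$ at $\nerve(M)/H\times\Sd^2K$, the paper observes that this object is (as a simplicial set) in the essential image of $\nerve$ --- $\nerve(M)/H\cong\nerve(M/H)$ by goodness, and $\Sd^2K$ is the nerve of a poset since $K$ carries the structure of a simplicial complex --- so that the unit is an isomorphism; you instead split the unit as a product and show only that its $\Sd^2K$-factor is a weak homotopy equivalence by playing it off against the Quillen equivalence of Theorem~\ref{thm:thomason}. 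That suffices for condition (\ref{item:tp-unit-we}) and is a valid alternative, at the modest cost of having to push the product decomposition through $H'$-fixed points. Second --- and this is the one actual error --- in condition (\ref{item:tp-pushout}) you justify that $\Ex$ preserves homotopy pushouts by declaring it a left Quillen endofunctor of $\cat{SSet}$. It is not: $\Ex$ is a right adjoint that does not preserve general pushouts, hence is not a left adjoint at all, so the cited principle does not apply. The conclusion you need is nonetheless true, for exactly the reason you give yourself in your final paragraph: $e_{\nerve(M)}\colon\id\Rightarrow\Ex_{\nerve(M)}$ is a natural levelwise weak equivalence (Lemma~\ref{lemma:ExN-homotopical}), so applying $\Ex_{\nerve(M)}$ to any square produces a square objectwise weakly equivalent to the original, and homotopy pushouts are invariant under such equivalences; this also makes your detour through fixed points unnecessary and is how the paper argues. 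With that one justification replaced, your proof is complete and matches the paper's in all essential respects.
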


If $M=G$ is a discrete group, the above result (without the finiteness condition on $\mathcal F$) was proven by Bohmann, Mazur, Osorno, Ozornova, Ponto, and Yarnall \cite[Theorems~A and~B]{g-wit} although they only explicitly state their result for the collection of \emph{all} subgroups.

\begin{proof}
Recall from Proposition~\ref{prop:equiv-model-structure} that the $\mathcal F$\hskip 0pt minus 1.5pt-model structure on $\cat{$\bm{\nerve(M)}$-SSet}$ is proper, simplicial, that filtered colimits in it are homotopical, and that it is cofibrantly generated with generating cofibrations
\begin{equation}\label{eq:recollection-M-sset-I}
I=\{\nerve(M)/H\times\del\Delta^n\to\nerve(M)/H\times\Delta^n : n\ge 0,\text{ }H\in\mathcal F\}
\end{equation}
and generating acyclic cofibrations
\begin{equation*}
J=\{\nerve(M)/H\times\Lambda^n_k\to\nerve(M)/H\times\Delta^n : 0\le k\le n,\text{ }H\in\mathcal F\}.
\end{equation*}

Let us verify the conditions of Proposition~\ref{prop:transfer-criterion}.
It is clear that $\cat{$\bm M$-Cat}$ is locally presentable. Moreover, each $N(M)/H$ is evidently cofibrant in $\cat{$\bm{\nerve(M)}$-SSet}$, and hence so are the sources of the generating acyclic cofibrations as the model structure is simplicial. To complete the verification of Condition~$(\ref{item:tp-unit-we})$ we will prove more generally that the unit is a weak equivalence for each $X=\nerve(M)/H\times K$ with $H\in\mathcal F$ and any $K\in\cat{SSet}$ that can be equipped with the structure of a simplicial complex, i.e.~that can be embedded into the nerve of a poset. For this we recall that a standard choice of unit is given by the composition
\begin{equation*}
\begin{aligned}
X&\xrightarrow{\eta}\Ex_{\nerve(M)}\Sd_{\nerve(M)} X \xrightarrow{\Ex_{\nerve(M)}\eta\Sd_{\nerve(M)}} \Ex_{\nerve(M)}^2\Sd_{\nerve(M)}^2 X \\
&\qquad\xrightarrow{\Ex_{\nerve(M)}^2\eta\Sd_{\nerve(M)}^2} \Ex_{\nerve(M)}^2\nerve_M\h_M\Sd_{\nerve(M)}^2 X
\end{aligned}
\end{equation*}
where the first two maps come from the unit of $\Sd_{\nerve(M)}\dashv\Ex_{\nerve(M)}$ and the final one is induced by the unit of $\h_{M}\dashv\nerve_M$.

By Corollary~\ref{cor:SdN-ExN-unit-we} the first map is a weak equivalence. As $\Sd_{\nerve(M)}(X)\cong \nerve(M)/H\times\Sd K$  (Lemma~\ref{lemma:SdN-repr}) and as $\Ex_{\nerve(M)}$ is homotopical (Lemma~\ref{lemma:ExN-homotopical}), the corollary also implies that the second map is a weak equivalence. For the final map we use Lemma~\ref{lemma:SdN-repr} twice to see $\Sd_{\nerve(M)}^2X\cong \nerve(M)/H\times\Sd^2K$. Now as an ordinary simplicial set $\nerve(M)/H$ lies in the essential image of $\nerve$ as the nerve preserves \emph{free} quotients, and so does $\Sd^2K$ by \cite[discussion after Proposition~2.5]{thomason-cat} as $K$ was assumed to admit the structure of a simplicial complex. Since $\nerve$ preserves products, we see that the underlying simplicial set of $\Sd_{\nerve(M)}^2X$ indeed lies in the essential image of $\nerve$. But as a map of simplicial sets, the unit $\eta\colon Y\to \nerve_M\h_M(Y)$ agrees with the usual unit $\eta\colon Y\to\nerve\h Y$ for any $Y\in\cat{$\bm M$-SSet}$; as the nerve is fully faithful, we see that this is in fact an isomorphism as soon as the underlying simplicial set of $Y$ lies in the essential image of $\nerve$, finishing the verification of $(\ref{item:tp-unit-we})$.

For Condition~$(\ref{item:tp-pushout})$---i.e.~that the right adjoint sends pushouts along generating cofibrations to homotopy pushouts---we show more generally (cf.~Lem\-ma~\ref{lemma:Dwyer-closure}-$(\ref{item:Dwyer-product})$) that pushouts along $M$-equivariant Dwyer maps are sent to homotopy pushouts by $\Ex_{\nerve(M)}^2\circ\nerve_M$, which is immediate from Corollary~\ref{cor:homotopy-pushout-dwyer-nerve} together with Lemma~\ref{lemma:ExN-homotopical}.

Finally, $\Ex_{\nerve(M)}$ preserves filtered colimits by Lemma~\ref{lemma:ExN-lim-colim}, and the same argument as employed there shows that also $\nerve_M$ preserves filtered colimits. Thus, also the composition $\Ex_{\nerve(M)}^2\nerve_M$ preserves filtered colimits, verifying Condition~$(\ref{item:tp-filtered})$.

Thus, the proposition applies and we see that $\cat{$\bm M$-SSet}$ carries a model structure such that a map $f$ is weak equivalence or fibration if and only if $\Ex_{\nerve(M)}^2\nerve(f)$ is, i.e.~if and only if $(\Ex_{\nerve(M)}^2\nerve_M(f))^H$ is a weak equivalence or fibration in the Kan Quillen model structure for every $H\in\mathcal F$. As both $\Ex_{\nerve(M)}$ and $\nerve_M$ commute with $(\blank)^H$ this is indeed equivalent to the condition stated in the theorem.

Moreover, the proposition tells us that $(\ref{eq:Quillen-equiv-cat-sset})$ is a Quillen equivalence, that the model structure obtained this way is proper with homotopy pushouts and pullbacks created by $\nerve_M$, and that filtered colimits in it are homotopical. Moreover, it shows that the model structure is cofibrantly generated (hence combinatorial) with generating cofibrations $\h_M\Sd^2_{\nerve(M)}(I)$ and generating acyclic cofibrations $\h_M\Sd^2_{\nerve(M)}(J)$. Finally, again using that the nerve preserves free quotients, Lemma~\ref{lemma:SdN-repr} tells us that also the sets from the theorem form generating cofibrations and generating acyclic cofibrations, respectively.
\end{proof}

\begin{cor}\label{cor:homotopy-pushout-m-cat}
In the situation of the theorem, pushouts along $M$-equi\-vari\-ant Dwyer maps are homotopy pushouts.
\begin{proof}
We have seen in the above proof that $\nerve_M$ sends such squares to homotopy pushouts.
\end{proof}
\end{cor}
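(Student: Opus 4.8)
The plan is to deduce the corollary from its simplicial counterpart via the right adjoint $\nerve_M$. By Theorem~\ref{thm:cat-vs-sset}, homotopy pushouts in the model structure on $\cat{$\bm M$-Cat}$ constructed there are \emph{created} by $\nerve_M$, i.e.~a commutative square in $\cat{$\bm M$-Cat}$ is a homotopy pushout if and only if its image under $\nerve_M$ is a homotopy pushout in the $\mathcal F$-model structure on $\cat{$\bm{\nerve(M)}$-SSet}$. So it suffices to verify the latter condition for the squares in question.

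To that end I would take an arbitrary pushout square in $\cat{$\bm M$-Cat}$ along an $M$-equivariant Dwyer map $i$, apply $\nerve_M$, and invoke Corollary~\ref{cor:homotopy-pushout-dwyer-nerve}, which says precisely that the resulting square is a homotopy pushout in $\cat{$\bm{\nerve(M)}$-SSet}$ for the $\mathcal F$-model structure; the only bookkeeping point is that we regard the good collection $\mathcal F$ of finite subgroups of $\Ob M$ inside $\nerve(M)_0$, as we have done throughout. Combining this with the creation property gives the claim.

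So there is in fact no real obstacle left at this point: the statement is a one-line consequence of work already done. The substance lives in (i) Corollary~\ref{cor:homotopy-pushout-dwyer-nerve}, which rests on the explicit description of pushouts along equivariant Dwyer maps (Construction~\ref{constr:pushout-G-Dwyer}), their compatibility with fixed points (Proposition~\ref{prop:pushout-cat-groups}), and Thomason's classical $\nerve$-pushout statement (Proposition~\ref{prop:pushout-dwyer-nerve}); and (ii) the ``homotopy pushouts created by $\nerve_M$'' clause of Theorem~\ref{thm:cat-vs-sset}, which itself comes from the transfer criterion Proposition~\ref{prop:transfer-criterion}. One might instead hope to argue more directly by showing that $M$-equivariant Dwyer maps are cofibrations in this model structure and then invoking left properness, but this is unlikely to work verbatim: even non-equivariantly, not every Dwyer map is a Thomason cofibration, which is exactly why the detour through $\nerve_M$ is the natural route.
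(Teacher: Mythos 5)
Your proposal is correct and follows essentially the same route as the paper: the proof of Theorem~\ref{thm:cat-vs-sset} already records (via Corollary~\ref{cor:homotopy-pushout-dwyer-nerve} and Lemma~\ref{lemma:ExN-homotopical}) that the right adjoint sends pushouts along $M$-equivariant Dwyer maps to homotopy pushouts, and the ``homotopy pushouts created by $\nerve_M$'' clause of that theorem then gives the claim. Your closing remark about why one cannot simply argue via left properness is a sensible sanity check but not needed.
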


\begin{cor}
In the above situation, a commutative square is a homotopy pushout if and only if for every $H\in\mathcal F$ the induced square on $H$-fixed points is a homotopy pushout in the Thomason model structure on $\cat{Cat}$.
\begin{proof}
Using that $\nerve_M\colon\cat{$\bm M$-Cat}\to\cat{$\bm{\nerve(M)}$-SSet}$ and $\nerve\colon\cat{Cat}\to\cat{SSet}$ create homotopy pushouts, this follows from the characterization given in Proposition~\ref{prop:equiv-model-structure}.
\end{proof}
\end{cor}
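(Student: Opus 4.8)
The plan is to obtain the statement by concatenating the two characterizations of homotopy pushouts that are already available. First I would invoke Theorem~\ref{thm:cat-vs-sset}, according to which $\nerve_M\colon\cat{$\bm M$-Cat}\to\cat{$\bm{\nerve(M)}$-SSet}$ creates homotopy pushouts: a commutative square in $\cat{$\bm M$-Cat}$ is a homotopy pushout if and only if its image under $\nerve_M$ is a homotopy pushout in the $\mathcal F$-model structure on $\cat{$\bm{\nerve(M)}$-SSet}$. Then I would apply the characterization from Proposition~\ref{prop:equiv-model-structure}, with $N=\nerve(M)$ and the given good collection $\mathcal F$ regarded as a collection of subgroups of $\nerve(M)_0$: such a square on the simplicial side is a homotopy pushout if and only if for every $H\in\mathcal F$ the induced square of $H$-fixed points is a homotopy pushout in $\cat{SSet}$.

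It then only remains to match the $H$-fixed points taken in $\cat{$\bm{\nerve(M)}$-SSet}$ with those taken in $\cat{$\bm M$-Cat}$. For an $M$-category $C$ the $H$-action on $\nerve_M(C)=\nerve(C)$ is the one induced by functoriality, as recorded in the construction of $\nerve_M$; since $\nerve$ is a right adjoint it preserves the limit computing $H$-fixed points, yielding a natural isomorphism $(\nerve_M C)^H\cong\nerve(C^H)$. Hence the square of $H$-fixed points of $\nerve_M$ applied to our original square is exactly $\nerve$ applied to the square of $H$-fixed points $(-)^H$ formed in $\cat{$\bm M$-Cat}$. Applying Theorem~\ref{thm:cat-vs-sset} once more, now to the trivial monoid $M=1$ with $\mathcal F=\{1\}$ (equivalently, the original non-equivariant statement of Thomason), we know that $\nerve\colon\cat{Cat}\to\cat{SSet}$ creates homotopy pushouts for the Thomason model structure; so $\nerve((-)^H)$ is a homotopy pushout in $\cat{SSet}$ if and only if $(-)^H$ is one in the Thomason model structure on $\cat{Cat}$. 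Chaining these equivalences together gives the corollary.

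I do not expect a genuine obstacle here: the argument is purely formal. The one point demanding a little care is the naturality of the isomorphism $(\nerve_M C)^H\cong\nerve(C^H)$ in the whole square, since it is precisely this compatibility that allows the two ``homotopy pushouts created by the nerve'' statements to be glued together; everything else is bookkeeping with the cited results.
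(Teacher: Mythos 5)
Your argument is correct and is exactly the paper's proof, just spelled out in more detail: the paper likewise chains the creation of homotopy pushouts by $\nerve_M$ (Theorem~\ref{thm:cat-vs-sset}), the fixed-point characterization of Proposition~\ref{prop:equiv-model-structure}, the compatibility $(\nerve_M C)^H\cong\nerve(C^H)$, and the fact that $\nerve$ creates homotopy pushouts for the Thomason model structure. No further comment is needed.
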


Together with Lemma~\ref{lemma:ExN-homotopical} we get:

\begin{cor}\label{cor:cat-vs-sset-quasi-cat}
For any good family $\mathcal F$, the homotopical functor
\begin{equation*}
\nerve_{M}\colon\cat{$\bm{M}$-Cat}_{\textup{$\mathcal F$-w.e.}}\to\cat{$\bm{\nerve(M)}$-SSet}_{\textup{$\mathcal F$-w.e.}}
\end{equation*}
descends to an equivalence of homotopy categories.\qed
\end{cor}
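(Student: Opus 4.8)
The plan is to deduce everything from the Quillen equivalence $(\ref{eq:Quillen-equiv-cat-sset})$ already established in Theorem~\ref{thm:cat-vs-sset}, the only extra ingredient being that the two copies of $\Ex_{\nerve(M)}$ appearing in its right adjoint can be harmlessly removed. First I would observe that, since Theorem~\ref{thm:cat-vs-sset} equips $\cat{$\bm M$-Cat}$ with a model structure whose weak equivalences are \emph{by construction} exactly the $\mathcal F$-weak equivalences, its homotopy category is canonically the localization of $\cat{$\bm M$-Cat}_{\textup{$\mathcal F$-w.e.}}$; likewise the $\mathcal F$-model structure on $\cat{$\bm{\nerve(M)}$-SSet}$ has homotopy category the localization of $\cat{$\bm{\nerve(M)}$-SSet}_{\textup{$\mathcal F$-w.e.}}$. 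As a Quillen equivalence induces an adjoint equivalence of homotopy categories, the total right derived functor of $\Ex^2_{\nerve(M)}\nerve_M$ is an equivalence between these.

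Next I would note that $\Ex^2_{\nerve(M)}\nerve_M$ is in fact \emph{homotopical} as a functor $\cat{$\bm M$-Cat}_{\textup{$\mathcal F$-w.e.}}\to\cat{$\bm{\nerve(M)}$-SSet}_{\textup{$\mathcal F$-w.e.}}$: the nerve preserves the limits defining fixed points, so $\nerve_M(f)^H\cong\nerve(f^H)$, and the right-hand side is a weak homotopy equivalence precisely when $f^H$ is a Thomason weak equivalence; and $\Ex_{\nerve(M)}$ is homotopical by Lemma~\ref{lemma:ExN-homotopical}. For a homotopical functor between model categories the total right derived functor is just the functor induced on homotopy categories by the universal property of localization (no fibrant replacement needed), so $\Ho(\Ex^2_{\nerve(M)}\nerve_M)$ is already an equivalence.

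It then remains to pass from $\Ex^2_{\nerve(M)}\nerve_M$ to $\nerve_M$. By Lemma~\ref{lemma:ExN-homotopical} the natural transformation $e_{\nerve(M)}\colon\id\Rightarrow\Ex_{\nerve(M)}$ is a levelwise weak equivalence, hence in particular an $\mathcal F$-weak equivalence; whiskering it (twice) with $\nerve_M$ produces a natural transformation $\nerve_M\Rightarrow\Ex^2_{\nerve(M)}\nerve_M$ which is objectwise an $\mathcal F$-weak equivalence. Since both its source and its target are homotopical, such a natural weak equivalence descends to a natural isomorphism $\Ho(\nerve_M)\cong\Ho(\Ex^2_{\nerve(M)}\nerve_M)$, whence $\Ho(\nerve_M)$ is an equivalence, which is the assertion. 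There is no genuine obstacle here: all the content sits in Theorem~\ref{thm:cat-vs-sset} and Lemma~\ref{lemma:ExN-homotopical}, and the only point requiring minor care is the bookkeeping identification of the homotopy category of the transferred model structure with the localization of $\cat{$\bm M$-Cat}$ at the $\mathcal F$-weak equivalences, which is immediate from how the weak equivalences are described in Theorem~\ref{thm:cat-vs-sset}.
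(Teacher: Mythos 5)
Your proposal is correct and is essentially the argument the paper intends (the corollary is stated with the preamble ``Together with Lemma~\ref{lemma:ExN-homotopical} we get'' and no written proof): the Quillen equivalence of Theorem~\ref{thm:cat-vs-sset} makes the homotopical right adjoint $\Ex^2_{\nerve(M)}\nerve_M$ descend to an equivalence of the localizations, and the twice-whiskered levelwise weak equivalence $e_{\nerve(M)}$ identifies $\Ho(\nerve_M)$ with it up to natural isomorphism. No gaps.
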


\subsection{The \texorpdfstring{$\bm G$}{G}-global model structure}
Let us specialize the above to the context of $G$-global homotopy theory:

\begin{cor}
For any discrete group $G$, there is a unique model structure on $\cat{$\bm{E\mathcal M}$-$\bm G$-Cat}$ in which a map $f$ is a weak equivalence or fibration if and only if $f^\phi$ is a weak equivalence or fibration, respectively, in the Thomason model structure for each universal $H\subset\mathcal M$ and each $\phi\colon H\to G$. We call this the \emph{$G$-global model structure} and its weak equivalences the \emph{$G$-global weak equivalences}.

This model category is combinatorial with generating cofibrations
\begin{equation*}\hskip-3pt\hfuzz=3.1pt
\{E\mathcal M\times_\phi G\times(\h\Sd^2\del\Delta^n\hookrightarrow\h\Sd^2\Delta^n) : n\ge 0,\text{ $H\subset\mathcal M$ universal},\phi\colon H\to G\}
\end{equation*}
and generating acyclic cofibrations
\begin{equation*}\hskip-11.63pt\hfuzz=11.63pt
\{E\mathcal M\times_\phi G\times(\h\Sd^2\Lambda^n_k\hookrightarrow\h\Sd^2\Delta^n) : 0\le k\le n,\text{ $H\subset\mathcal M$ universal}, \phi\colon H\to G\}.
\end{equation*}
Moreover, it is proper and filtered colimits in it are homotopical.

Finally, we have a Quillen equivalence
\begin{equation*}
\h_{E\mathcal M\times G}\Sd^2_{E\mathcal M\times G}\colon\cat{$\bm{E\mathcal M}$-$\bm G$-SSet}\rightleftarrows\cat{$\bm{E\mathcal M}$-$\bm G$-Cat}:\! \Ex^2_{E\mathcal M\times G}\nerve_{E\mathcal M\times G}
\end{equation*}
with respect to the model structure from Corollary~\ref{cor:EM-G-SSet-model-structure} on the left hand side.\qed
\end{cor}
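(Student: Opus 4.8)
The strategy is to read off the corollary as the special case of Theorem~\ref{thm:cat-vs-sset} for the categorical monoid $M=E\mathcal M\times G$ and the collection
\[
\mathcal F=\{\Gamma_{H,\phi}\subset\mathcal M\times G : H\subset\mathcal M\text{ universal},\ \phi\colon H\to G\}
\]
of graph subgroups. Here $E\mathcal M\times G$ is a monoid in $\cat{Cat}$ as a product of the monoids $E\mathcal M$ and $G$, and its object monoid is $\mathcal M\times G$. Since the projection $\Gamma_{H,\phi}\to H$ is an isomorphism, each $\Gamma_{H,\phi}$ is a \emph{finite} subgroup of $\mathcal M\times G$; moreover, as $\mathcal M$ is a set its subgroups form a set, and for each such $H$ the homomorphisms $\phi\colon H\to G$ form a set, so---unlike Schwede's $I$ and $J$---$\mathcal F$ is here an honest set. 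Finally, by the Example following Definition~\ref{defi:good-subgroup} every subgroup of $\Ob(E\mathcal M\times G)$ is good, so $\mathcal F$ is a good collection of finite subgroups and Theorem~\ref{thm:cat-vs-sset} applies.

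It then only remains to rewrite the conclusion of the theorem in the present notation. Fixed points with respect to $\Gamma_{H,\phi}$ are by definition the functor $(\blank)^\phi$, so the weak equivalences and fibrations provided by the theorem are precisely those described in the corollary, and the theorem also yields properness and the homotopicality of filtered colimits. For the generating (acyclic) cofibrations one substitutes $M/\Gamma_{H,\phi}=E\mathcal M\times_\phi G$ into the sets of Theorem~\ref{thm:cat-vs-sset}, obtaining exactly the maps $E\mathcal M\times_\phi G\times(\h\Sd^2\del\Delta^n\hookrightarrow\h\Sd^2\Delta^n)$ and their horn analogues; the compatibility of this quotient with nerves (so that the notation $E\mathcal M\times_\phi G$ remains unambiguous) uses that $\nerve$ preserves free quotients and that the right $\Gamma_{H,\phi}$-action on $E\mathcal M\times G$ is free by goodness.

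For the Quillen equivalence I would identify $\nerve(E\mathcal M\times G)$ with $E\mathcal M\times G$ as a simplicial monoid---the nerve preserves products, and $\nerve(E\mathcal M)$ and $\nerve(G)$ are the simplicial monoids carrying the same names---so that $\cat{$\bm{\nerve(M)}$-SSet}$ becomes literally $\cat{$\bm{E\mathcal M}$-$\bm G$-SSet}$. Under this identification the $\mathcal F$-model structure of Proposition~\ref{prop:equiv-model-structure} is exactly the $G$-global model structure of Corollary~\ref{cor:EM-G-SSet-model-structure}, since both are characterized by testing weak equivalences and fibrations on $(\blank)^\phi$ for universal $H\subset\mathcal M$ and all $\phi\colon H\to G$. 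Hence the Quillen equivalence $(\ref{eq:Quillen-equiv-cat-sset})$ of the theorem, specialized to $\nerve(M)=E\mathcal M\times G$, is precisely the asserted one. This argument has no substantial obstacle; the only points needing (minor) care are the bookkeeping identifications $\nerve(E\mathcal M\times G)=E\mathcal M\times G$ and $M/\Gamma_{H,\phi}=E\mathcal M\times_\phi G$, both immediate from the nerve preserving products and free quotients.
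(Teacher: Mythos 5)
Your proposal is correct and is exactly the argument the paper intends: the corollary is stated with an immediate \qed as the specialization of Theorem~\ref{thm:cat-vs-sset} to $M=E\mathcal M\times G$ and the set of graph subgroups $\Gamma_{H,\phi}$, whose finiteness and goodness are supplied by the example following Definition~\ref{defi:good-subgroup}. Your bookkeeping identifications ($\nerve(E\mathcal M\times G)\cong E\mathcal M\times G$ via the nerve preserving products, and $M/\Gamma_{H,\phi}=E\mathcal M\times_\phi G$) are precisely the points one needs to check, and they are handled correctly.
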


\begin{cor}
The homotopical functor
\begin{equation*}
\nerve_{E\mathcal M\times G}\colon\cat{$\bm{E\mathcal M}$-$\bm G$-Cat}_{\textup{$G$-global}}\to\cat{$\bm{E\mathcal M}$-$\bm G$-SSet}_{\textup{$G$-global}}
\end{equation*}
descends to an equivalence of homotopy categories.\qed
\end{cor}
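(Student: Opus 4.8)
The plan is to obtain this as a direct specialization of Corollary~\ref{cor:cat-vs-sset-quasi-cat} to the categorical monoid $M=E\mathcal M\times G$ and to the collection $\mathcal E$ of all graph subgroups $\Gamma_{H,\phi}\subset\mathcal M\times G$ with $H\subset\mathcal M$ universal and $\phi\colon H\to G$ arbitrary, regarded as subgroups of $\Ob(E\mathcal M\times G)=\mathcal M\times G$. First I would record that $\mathcal E$ is a \emph{good} family of \emph{finite} subgroups: each $\Gamma_{H,\phi}$ is isomorphic to the (by definition finite) group $H$, and goodness is precisely the content of the Example observing that every subgroup of $\Ob(E\mathcal M\times G)$ is good, since right multiplication by injections is free. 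Next I would note that for an $(E\mathcal M\times G)$-category or simplicial set $Y$ the $\Gamma_{H,\phi}$-fixed points are precisely what we denoted $Y^\phi$, so that the $\mathcal E$-model structure produced by Theorem~\ref{thm:cat-vs-sset} on $\cat{$\bm M$-Cat}$ coincides with the $G$-global model structure of the previous corollary, and the $\mathcal E$-model structure on $\cat{$\bm{\nerve(M)}$-SSet}=\cat{$\bm{E\mathcal M}$-$\bm G$-SSet}$ coincides with the $G$-global model structure of Corollary~\ref{cor:EM-G-SSet-model-structure}.

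With these identifications in place, I would recall the short argument behind Corollary~\ref{cor:cat-vs-sset-quasi-cat} in the present case. The functor $\nerve_{E\mathcal M\times G}$ is homotopical: by definition $f$ is a $G$-global weak equivalence in $\cat{$\bm{E\mathcal M}$-$\bm G$-Cat}$ if and only if $(\Ex^2_{E\mathcal M\times G}\nerve_{E\mathcal M\times G}f)^\phi$ is a Kan--Quillen weak equivalence for every $\phi$, and since $\Ex_{E\mathcal M\times G}$ is homotopical (Lemma~\ref{lemma:ExN-homotopical}) and commutes with fixed points, this already holds once $(\nerve_{E\mathcal M\times G}f)^\phi$ is a weak equivalence for every $\phi$, i.e.\ once $\nerve_{E\mathcal M\times G}f$ is a $G$-global weak equivalence; hence $\nerve_{E\mathcal M\times G}$ descends to $\Ho(\nerve_{E\mathcal M\times G})$. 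To see that this functor is an equivalence, observe that the composite $\Ex^2_{E\mathcal M\times G}\nerve_{E\mathcal M\times G}$ is likewise homotopical, so its total right derived functor may be identified with the induced functor $\Ho(\Ex^2_{E\mathcal M\times G}\nerve_{E\mathcal M\times G})$; being the right adjoint of the Quillen equivalence of the previous corollary, this is an equivalence of categories. On the other hand $e_{E\mathcal M\times G}\colon\id\Rightarrow\Ex_{E\mathcal M\times G}$ is a levelwise weak equivalence (Lemma~\ref{lemma:ExN-homotopical}), so it descends to a natural isomorphism $\id\cong\Ho(\Ex_{E\mathcal M\times G})$, whence $\Ho(\Ex^2_{E\mathcal M\times G})$ is a self-equivalence of $\Ho(\cat{$\bm{E\mathcal M}$-$\bm G$-SSet})$. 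Since $\Ho(\Ex^2_{E\mathcal M\times G})\circ\Ho(\nerve_{E\mathcal M\times G})=\Ho(\Ex^2_{E\mathcal M\times G}\nerve_{E\mathcal M\times G})$, two-out-of-three for equivalences of categories then forces $\Ho(\nerve_{E\mathcal M\times G})$ to be an equivalence as well.

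Since the statement is essentially packaged in the results already established for general good families of finite subgroups, I do not anticipate a genuine obstacle. The only points that require care are the bookkeeping of the first paragraph---verifying that $\mathcal E$ is a good family of finite subgroups and that the two $\mathcal E$-model structures really are the respective $G$-global ones---together with the standard observation used in the second paragraph that a homotopical functor requires no derived-functor correction, so that the right-derived functor of the Quillen equivalence is genuinely $\Ho(\Ex^2_{E\mathcal M\times G}\nerve_{E\mathcal M\times G})$.
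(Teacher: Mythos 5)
Your proof is correct and follows essentially the same route as the paper: the statement is just the specialization of Corollary~\ref{cor:cat-vs-sset-quasi-cat} to $M=E\mathcal M\times G$ and the (good, finite) family of graph subgroups $\Gamma_{H,\phi}$ with $H$ universal and $\phi$ arbitrary, and your unpacking of that corollary via the Quillen equivalence and the levelwise equivalence $e_N\colon\id\Rightarrow\Ex_N$ matches the intended argument. One cosmetic caveat: avoid calling your family $\mathcal E$, since the paper reserves that symbol for the smaller family where $\phi$ is required to be injective.
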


On the other hand, we can equip $\cat{$\bm G$-Cat}$---either by Theorem~\ref{thm:cat-vs-sset} above or by \cite[Theorem~A]{g-wit}---with the equivariant model structure with respect to the collection of all finite subgroups $H\subset G$; we again call this the \emph{proper $G$-equivariant model structure}. As in the simplicial case, the $G$-global and proper $G$-equivariant model structure are related through a chain of four adjoints:

\begin{prop}\label{prop:G-global-vs-G-equiv-cat}
The functor $\triv_{E\mathcal M}\colon\cat{$\bm G$-Cat}_{\textup{proper}}\to\cat{$\bm{E\mathcal M}$-$\bm G$-Cat}_{\textup{$G$-global}}$ is homotopical and the induced functor $\Ho(\triv_{E\mathcal M})$ on homotopy categories fits into a sequence of four adjoints suggestively denoted by
\begin{equation*}
\cat{L}(\blank/E\mathcal M)\dashv\Ho(\triv_{E\mathcal M})\dashv(\blank)^{\cat{R}E\mathcal M}\dashv\mathcal R.
\end{equation*}
Moreover, $(\blank)^{\cat{R}E\mathcal M}$ is a (Bousfield) localization at the $\mathcal E$-weak equivalences.
\begin{proof}
The diagram of homotopical functors
\begin{equation*}
\begin{tikzcd}[column sep=large]
\cat{$\bm G$-Cat}_{\textup{proper}}\arrow[r, "\triv_{E\mathcal M}"]\arrow[d, "\nerve_G"'] & \cat{$\bm{E\mathcal M}$-$\bm G$-Cat}_{\textup{$G$-global}}\arrow[d, "\nerve_{E\mathcal M\times G}"]\\
\cat{$\bm G$-SSet}_{\textup{proper}}\arrow[r, "\triv_{E\mathcal M}"'] & \cat{$\bm{E\mathcal M}$-$\bm G$-SSet}_{\textup{$G$-global}}
\end{tikzcd}
\end{equation*}
commutes by direct inspection, and the vertical maps induce equivalences of homotopy categories by Corollary~\ref{cor:cat-vs-sset-quasi-cat}. Thus, the claim follows from Theorem~\ref{thm:G-global-vs-G-equiv-sset} by a straight-forward diagram chase.
\end{proof}
\end{prop}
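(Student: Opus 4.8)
The plan is to deduce the proposition from its simplicial counterpart Theorem~\ref{thm:G-global-vs-G-equiv-sset} by transporting the four-term adjunction along the nerve equivalences, exactly as the authors indicate.

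First I would settle the two elementary inputs. For the homotopicality of $\triv_{E\mathcal M}$: given a universal subgroup $H\subset\mathcal M$ and a homomorphism $\phi\colon H\to G$, an element $(h,\phi(h))\in\Gamma_{H,\phi}$ acts on $\triv_{E\mathcal M}(C)$ simply as $\phi(h)$ through the $G$-action, so $(\triv_{E\mathcal M}C)^\phi=C^{\phi(H)}$, and $\phi(H)\subset G$ is a \emph{finite} subgroup. Hence a proper $G$-equivariant weak equivalence---one inducing Thomason weak equivalences on all finite fixed points---is sent to a $G$-global weak equivalence. The same computation, together with the fact that the nerve commutes with trivializing the $E\mathcal M$-action, shows that the displayed square of homotopical functors commutes strictly: both composites send $C$ to $\nerve(C)$ equipped with the trivial $E\mathcal M$-action and the $G$-action induced by functoriality.

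Next I would invoke Corollary~\ref{cor:cat-vs-sset-quasi-cat}: every subgroup of a group is good, so the collection of finite subgroups of $G$ is good and $\nerve_G$ descends to an equivalence of homotopy categories; likewise all subgroups of $\mathcal M\times G$ are good, so $\nerve_{E\mathcal M\times G}$ descends to an equivalence on the $G$-global homotopy categories. Since the square commutes, $\Ho(\triv_{E\mathcal M})$ on the categorical side is, up to these two equivalences, literally $\Ho(\triv_{E\mathcal M})$ on the simplicial side; because adjoints transport along equivalences of categories, the chain $\cat{L}(\blank/E\mathcal M)\dashv\Ho(\triv_{E\mathcal M})\dashv(\blank)^{\cat{R}E\mathcal M}\dashv\mathcal R$ of Theorem~\ref{thm:G-global-vs-G-equiv-sset} carries over verbatim, with the categorical $(\blank)^{\cat{R}E\mathcal M}$ simply \emph{defined} as the conjugate of its simplicial namesake.

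For the localization statement I would additionally observe that $\nerve_{E\mathcal M\times G}$ detects $\mathcal E$-weak equivalences---it commutes with graph-subgroup fixed points and detects Thomason, hence Kan--Quillen, weak equivalences---so the nerve equivalence identifies the $\mathcal E$-local homotopy categories on the two sides; under this identification the categorical $(\blank)^{\cat{R}E\mathcal M}$ becomes the simplicial one, which is a Bousfield localization at the $\mathcal E$-weak equivalences by Theorem~\ref{thm:G-global-vs-G-equiv-sset}. The whole argument is a formal diagram chase once the preceding corollaries are in hand; the one point demanding a moment's care---and the closest thing to an obstacle---is this last matching of weak-equivalence classes across the nerve equivalence.
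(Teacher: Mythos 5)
Your proposal is correct and follows the same route as the paper: commute the square of homotopical functors with the nerve equivalences from Corollary~\ref{cor:cat-vs-sset-quasi-cat} and transport the adjoint chain of Theorem~\ref{thm:G-global-vs-G-equiv-sset} across them. The paper compresses all of this into ``a straight-forward diagram chase''; your fleshed-out version, including the fixed-point computation $(\triv_{E\mathcal M}C)^\phi=C^{\phi(H)}$ and the matching of $\mathcal E$-weak equivalences under the nerve, supplies exactly the details that chase requires.
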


\begin{rk}
Analogously to the case of the simplicial models treated in \cite[Theorem~1.2.92]{g-global}, the functor $\triv_{\hskip-.8ptE\hskip-.8pt\mathcal M}$ is easily seen to be right Quillen with respect to the above model structures, so that its left adjoint $(\blank)/E\mathcal M$ is left Quillen, justifying the above notation $\cat{L}(\blank/E\mathcal M)$. Moreover, while $\triv_{E\mathcal M}$ is \emph{not} left Quillen, we can make it into a left Quillen functor by suitably enlarging the generating cofibrations of the $G$-global model structure, e.g.~using \cite[Corollary~A.2.17]{g-global}. With respect to this model structure, $(\blank)^{E\mathcal M}$ is then right Quillen and its right derived functor is then right adjoint to $\Ho(\triv_{E\mathcal M})$ as before.
\end{rk}

\section{\texorpdfstring{$\bm G$}{G}-categories as models of \texorpdfstring{$\bm G$}{G}-global homotopy types}\label{sec:g-cat}
Fix a discrete group $G$. In this final section, we will prove as our main result that already categories with a mere $G$-action model unstable $G$-global homotopy theory. For this we will use the following notion from \cite[Definition~6.1]{sym-mon-global} which generalizes Schwede's \emph{global equivalences} \cite[Definition~3.2]{schwede-cat}:

\begin{defi}
Let $C$ be a $G$-category, let $H$ be a finite group, and let $\phi\colon H\to G$ be a group homomorphism. We define the \emph{$\phi$-`homotopy' fixed points} of $C$ as
\begin{equation*}
C^{\myh\phi}\mathrel{:=}\Fun(EH,\phi^*C)^H
\end{equation*}
where the $H$-action on the right is the diagonal of the $H$-action on $C$ via $\phi$ and the one induced by the right regular action on $EH$.

If $f$ is a $G$-equivariant functor $f\colon C\to D$, then $f^{\myh\phi}$ is defined analogously. We call $f$ a \emph{$G$-global weak equivalence} if $f^{\myh\phi}$ is a weak homotopy equivalence (i.e.~weak equivalence in the Thomason model structure) for every such $\phi$.
\end{defi}

\begin{rk}\label{rk:homotopy-fixed-points}
The above represents the $H$-homotopy fixed points of $\phi^*C$ \emph{with respect to the canonical model structure on $\cat{Cat}$}, i.e.~where the weak equivalences are the equivalences of categories. However, as we are generalizing Thomason's model structure (for which homotopy fixed points look quite different), we have decided to put `homotopy' in quotation marks everywhere.

It is of course crucial that we're taking homotopy fixed points with respect to the `wrong' model structure here as otherwise being a $G$-global weak equivalence would be equivalent to being an underlying weak homotopy equivalence.
\end{rk}

\begin{thm}\label{thm:homotopy-fixed-vs-really-fixed}
There is a unique cofibrantly generated model structure on $\cat{$\bm G$-Cat}$ with weak equivalences the $G$-global weak equivalences and with generating cofibrations given by
\begin{align*}
\{(E\mathcal M\times_\phi G)\times(\h\Sd^2\del\Delta^n\hookrightarrow\h\Sd^2\Delta^n) : {}&n\ge 0,H\subset\mathcal M\text{ universal},\\ &\phi\colon H\to G\text{ homomorphism}\}.
\end{align*}
This model structure is combinatorial, proper, and filtered colimits in it are homotopical. We call it the \emph{thick $G$-global model structure}. A set of generating acyclic cofibrations is given by
\begin{align*}
\{(E\mathcal M\times_\phi G)\times(\h\Sd^2\Lambda^n_k\hookrightarrow\h\Sd^2\Delta^n) : {}&0\le k\le n,H\subset\mathcal M\text{ universal},\\ &\phi\colon H\to G\text{ homomorphism}\}.
\end{align*}
Moreover, we have a Quillen equivalence
\begin{equation}\label{eq:G-forget}
\forget\colon\cat{$\bm{E\mathcal M}$-$\bm G$-Cat}_{\textup{$G$-global}}\rightleftarrows\cat{$\bm G$-Cat}_{\textup{thick $G$-global}} :\!\Fun(E\mathcal M,\blank)
\end{equation}
(where $\Fun(E\mathcal M,C)$ is equipped for every $C\in\cat{$\bm G$-Cat}$ with the left $E\mathcal M$-action induced by the usual right $E\mathcal M$-action on itself), and the right adjoint creates homotopy pushouts and homotopy pullbacks.
\end{thm}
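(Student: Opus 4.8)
The plan is to obtain the thick $G$-global model structure by transferring the $G$-global model structure on $\cat{$\bm{E\mathcal M}$-$\bm G$-Cat}$ along the adjunction $\forget\dashv\Fun(E\mathcal M,\blank)$ via Proposition~\ref{prop:transfer-criterion}, using the stronger hypothesis~$(\ref{item:tp-unit-we})$ so as to obtain the Quillen equivalence~$(\ref{eq:G-forget})$ simultaneously. Concretely one takes $\mathscr C=\cat{$\bm{E\mathcal M}$-$\bm G$-Cat}_{\textup{$G$-global}}$---which is combinatorial, proper, and has homotopical filtered colimits by Theorem~\ref{thm:cat-vs-sset} applied to $M=E\mathcal M\times G$ and $\mathcal F=\{\Gamma_{H,\phi}\}$, with generating (acyclic) cofibrations $I$ (resp.\ $J$) the maps $E\mathcal M\times_\phi G\times(\h\Sd^2\del\Delta^n\hookrightarrow\h\Sd^2\Delta^n)$ (resp.\ the horn variants)---and $\mathscr D=\cat{$\bm G$-Cat}$, which is locally presentable. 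Since $\forget$ reproduces $I$ and $J$ verbatim (reading $E\mathcal M\times_\phi G$ as its underlying $G$-category), $FI$ and $FJ$ are precisely the generating sets of the statement, and combinatoriality, properness, homotopicity of filtered colimits, creation of homotopy pushouts and---since $\mathscr C$ is right proper---homotopy pullbacks by $\Fun(E\mathcal M,\blank)$, as well as uniqueness, will all be part of the output of Proposition~\ref{prop:transfer-criterion}.

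The technical core is a \emph{comparison lemma}: for every $C\in\cat{$\bm G$-Cat}$, every universal $H\subset\mathcal M$, and every $\phi\colon H\to G$, restriction along $EH\hookrightarrow E\mathcal M$ induces a weak homotopy equivalence $\Fun(E\mathcal M,C)^{\Gamma_{H,\phi}}\to\Fun(EH,\phi^*C)^H=C^{\myh\phi}$, naturally in $C$. To prove it one uses that the right $H$-action on $\mathcal M$ by multiplication is free (injections are monomorphisms), so $\mathcal M\cong H\times S$ as right $H$-sets with $1$ in an orbit identified with $H\times\{s_0\}$; since $E$ preserves products this gives an $H$-equivariant isomorphism $E\mathcal M\cong EH\times ES$ with trivial action on $ES$, restricting to $EH\hookrightarrow E\mathcal M$ on $EH\times\{s_0\}$, and then cartesian closedness of $\cat{Cat}$ identifies $\Fun(E\mathcal M,C)^{\Gamma_{H,\phi}}\cong\Fun(ES,C^{\myh\phi})$ with the restriction map corresponding to evaluation at $s_0$, which is an equivalence of categories as $ES$ is a nonempty contractible groupoid. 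Granting this, a map $f$ of $\cat{$\bm G$-Cat}$ has $\Fun(E\mathcal M,f)$ a $G$-global weak equivalence in $\cat{$\bm{E\mathcal M}$-$\bm G$-Cat}$ exactly when $f^{\myh\phi}$ is a weak homotopy equivalence for all universal $H$ and all $\phi\colon H\to G$; since $C^{\myh\phi}$ depends only on the abstract group $H$ and every finite group occurs as a universal subgroup of $\mathcal M$, this is precisely the definition of a $G$-global weak equivalence, so the transferred weak equivalences are as claimed.

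Next one checks the remaining hypotheses of Proposition~\ref{prop:transfer-criterion}. For~$(\ref{item:tp-pushout})$: $\forget$ of a generating cofibration is a $G$-equivariant Dwyer map (Proposition~\ref{prop:h-sd-cofib} together with the product-closure in Lemma~\ref{lemma:Dwyer-closure}), so Lemma~\ref{lemma:pushout-dwyer-fun} with $X=E\mathcal M$ shows that $\Fun(E\mathcal M,\blank)$ carries the pushout square to a pushout along the $(E\mathcal M\times G)$-equivariant Dwyer map $\Fun(E\mathcal M,\forget(i))$ (the monoid version of Lemma~\ref{lemma:Dwyer-closure}), which is a homotopy pushout by Corollary~\ref{cor:homotopy-pushout-m-cat}. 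For~$(\ref{item:tp-filtered})$: the functors $(\blank)^{\Gamma_{H',\psi}}$, $(\blank)^{\myh\psi}$ and $\Fun(EH',\blank)$ all commute with filtered colimits in $\cat{Cat}$ and Thomason weak equivalences are stable under filtered colimits, so the comparison lemma and $2$-out-of-$3$ show that $\colim\Fun(E\mathcal M,X_\bullet)\to\Fun(E\mathcal M,\colim X_\bullet)$ is a $G$-global weak equivalence. For~$(\ref{item:tp-unit-we})$ the sources and targets of $I$ and $J$ are cofibrant in $\mathscr C$ as in Theorem~\ref{thm:cat-vs-sset} ($\h\Sd^2 K$ is Thomason-cofibrant and $E\mathcal M\times_\phi G\times(\blank)\colon\cat{Cat}\to\cat{$\bm{E\mathcal M}$-$\bm G$-Cat}$ preserves colimits by cartesian closedness), $\eta_\varnothing$ is an isomorphism as $\Fun(E\mathcal M,\varnothing)=\varnothing$, and the unit at $X=E\mathcal M\times_\phi G\times\h\Sd^2 K$ splits as $\eta_{E\mathcal M\times_\phi G}\times\eta_{\triv(\h\Sd^2 K)}$ because $\forget$ and $\Fun(E\mathcal M,\blank)$ preserve products; the second factor is a weak homotopy equivalence since $\h\Sd^2 K$ is a poset, whence $\Fun(EH,\h\Sd^2 K)^H\cong\h\Sd^2 K$ and the comparison lemma applies, and a product of weak homotopy equivalences of categories is one. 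Thus everything reduces to showing that $\eta_{E\mathcal M\times_\phi G}$ is a $G$-global weak equivalence.

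This last assertion---equivalently, by the comparison lemma, that the canonical map $(E\mathcal M\times_\phi G)^\psi\to(\forget(E\mathcal M\times_\phi G))^{\myh\psi}$ is a weak homotopy equivalence for all universal $H'\subset\mathcal M$ and all $\psi\colon H'\to G$---is the step I expect to be the main obstacle. The plan is an explicit fixed-point computation: the underlying $G$-category of $E\mathcal M\times_\phi G$ is a disjoint union of contractible groupoids indexed by $G/\phi(H)$ on which $G$ acts by permuting and twisting the summands, and on $\Gamma_{H',\psi}$-fixed points both $(E\mathcal M\times_\phi G)^\psi$ and $\Fun(E\mathcal M,\forget(E\mathcal M\times_\phi G))^\psi$ become disjoint unions of \emph{nonempty} contractible groupoids indexed by the same set---roughly, the homomorphisms $\chi\colon H'\to H$ with $\phi\chi$ a $G$-conjugate of $\psi$---with $\eta$ inducing a bijection on connected components and an equivalence on each summand. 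Universality of $H'$ is used precisely to see that the relevant summands of $(E\mathcal M\times_\phi G)^\psi$ are nonempty: they are indexed by $H'$-equivariant injections $\omega\to\omega$ intertwining the tautological action with the one through $\chi$, and such injections exist because $\omega$ with its tautological $H'$-action is a complete $H'$-set universe. With~$(\ref{item:tp-unit-we})$ verified, Proposition~\ref{prop:transfer-criterion} delivers the model structure with all the asserted properties and the Quillen equivalence~$(\ref{eq:G-forget})$, completing the proof.
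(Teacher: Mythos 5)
Your overall strategy is exactly the paper's: transfer the $G$-global model structure on $\cat{$\bm{E\mathcal M}$-$\bm G$-Cat}$ along $\forget\dashv\Fun(E\mathcal M,\blank)$ via Proposition~\ref{prop:transfer-criterion} with the stronger hypothesis~(\ref{item:tp-unit-we}), verify condition~(\ref{item:tp-pushout}) via equivariant Dwyer maps and Corollary~\ref{cor:homotopy-pushout-m-cat}, verify~(\ref{item:tp-filtered}) by reducing $\Fun(E\mathcal M,\blank)^\phi$ to $\Fun(EH,\blank)^\phi$, and identify the transferred weak equivalences via your ``comparison lemma,'' which is the paper's Lemma~\ref{lemma:hphi-vs-em-phi} (your product decomposition $\mathcal M\cong H\times S$ and the paper's $H$-equivariant retraction $r\colon\mathcal M\to H$ are two phrasings of the same use of freeness). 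The one genuine divergence is the last step: the paper outsources the saturation of $E\mathcal M\times_\phi G$ to Proposition~\ref{prop:generators-saturated}, i.e.\ to \cite[Lemma~4.2.10]{g-global}, whereas you sketch a self-contained fixed-point computation. That is a legitimate and arguably more instructive route, and you correctly locate where universality of $H'$ enters (essential surjectivity of the unit on $\psi$-fixed points, via completeness of $\omega$ as an $H'$-set universe).

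However, your sketch of that computation contains a claim that is false precisely in the case that makes the statement nontrivial. When $\phi\colon H\to G$ is \emph{not} injective, the components of $E\mathcal M\times_\phi G$ are not contractible groupoids: the quotient $(E\mathcal M\times G)/\Gamma_{H,\phi}$ of the indiscrete category by the free right $\Gamma_{H,\phi}$-action has $\Hom([u,g],[u',g'])\cong\{h\in H:g=g'\phi(h)\}$, so every object has automorphism group $\ker\phi$, and each component is equivalent to $B(\ker\phi)$. Consequently neither $(E\mathcal M\times_\phi G)^{\Gamma_{H',\psi}}$ nor the `homotopy' fixed points are disjoint unions of contractible groupoids: the component of a fixed object carrying the homomorphism $\chi\colon H'\to H$ has automorphism group $\ker\phi\cap\centralizer_H(\chi(H'))$, and morphisms between fixed objects $(g,\chi)\to(g',\chi')$ are the $h\in H$ with $\phi(h)=g'^{-1}g$ and $\chi'=h\chi h^{-1}$. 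So an argument of the form ``both sides are coproducts of contractible groupoids over the same index set'' does not establish the equivalence; you must check that the unit is \emph{fully faithful} on fixed points (which it is, by direct inspection of these $\Hom$-sets) in addition to the bijection on $\pi_0$ that universality provides. Your parenthetical ``$\eta$ induces a bijection on connected components and an equivalence on each summand'' is the correct statement and makes the argument salvageable, but as written the ``contractible'' description would lead you astray exactly for non-injective $\phi$ (e.g.\ for $\phi\colon H\to 1$ both sides are equivalent to a disjoint union of $B(\centralizer_H(\chi(H')))$'s). With that correction, the verification of~(\ref{item:tp-unit-we}) goes through and the rest of your proof matches the paper's.
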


The proof will be given below after some preparations.

\begin{lemma}\label{lemma:hphi-vs-em-phi}
Let $f\colon C\to D$ be a map of $G$-categories, let $H\subset\mathcal M$ be any subgroup, and let $\phi\colon H\to G$. Then $f^{\myh\phi}$ is a weak equivalence in the Thomason model structure if and only if $\Fun(E\mathcal M, f)^\phi$ is so. In particular, $f$ is a $G$-global weak equivalence in $\cat{$\bm G$-Cat}$ if and only if $\Fun(E\mathcal M,f)$ is a $G$-global weak equivalence in $\cat{$\bm{E\mathcal M}$-$\bm G$-Cat}$.
\begin{proof}
This appears as \cite[Remark~4.1.28]{g-global}; as we will need similar arguments below, we spell out the proof in detail.

It will be enough to show that the inclusion $H\hookrightarrow\mathcal M$ induces a $(G\times H)$-equivariant weak equivalence $\Fun(E\mathcal M,C)\to\Fun(EH,C)$ for every $G$-category $C$. For this we observe that there exists a right $H$-equivariant map $r\colon\mathcal M\to H$ as $H$ acts freely from the right on $\mathcal M$. There are then unique isomorphisms $E(i)E(r)=E(ir)\cong\id$ and $E(r)E(i)=E(ri)\cong\id$, and these are automatically $H$-equivariant. It is then clear that the corresponding restrictions exhibit $\Fun(Er,C)$ as a $(G\times H)$-equivariant quasi-inverse to the map in question; in particular, both are $(G\times H)$-equivariant weak equivalences.
\end{proof}
\end{lemma}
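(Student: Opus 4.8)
The plan is to deduce the entire lemma from the single assertion that for every $G$-category $C$ the restriction functor $\rho_C\colon\Fun(E\mathcal M,C)\to\Fun(EH,C)$ along the inclusion $i\colon H\hookrightarrow\mathcal M$ is a $(G\times H)$-equivariant equivalence of categories that admits a $(G\times H)$-equivariant quasi-inverse together with $(G\times H)$-equivariant unit and counit isomorphisms. Here $H$ acts on the two functor categories by precomposition with the right translation actions on $E\mathcal M$ and $EH$, while $G$ acts by postcomposition with the action on $C$, and the two actions commute. Granting this, applying the fixed-point functor $(\blank)^K$ for any subgroup $K\subset G\times H$ turns $\rho_C$ into an equivalence of categories $\rho_C^K$ (it sends the equivariant quasi-inverse and the equivariant unit and counit to a quasi-inverse and unit and counit of $\rho_C^K$), hence into a weak equivalence in the Thomason model structure, since equivalences of categories are in particular Thomason weak equivalences. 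Taking $K$ to be the graph subgroup $\{(\phi(h),h):h\in H\}$ and unwinding the definitions identifies $\rho_C^K$ with a map $\Fun(E\mathcal M,C)^\phi\to C^{\myh\phi}$ which is natural in $C$; for a morphism $f\colon C\to D$ this yields a commutative square whose horizontal maps are the weak equivalences $\rho_C^K$ and $\rho_D^K$ and whose vertical maps are $\Fun(E\mathcal M,f)^\phi$ and $f^{\myh\phi}$, so the first claim follows by $2$-out-of-$3$.

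The only content-ful point is the construction of the equivariant quasi-inverse, and it rests on the fact that the right $H$-action on $\mathcal M$ by multiplication is \emph{free}: indeed $mh=mh'$ forces $h=h'$ because every element of $\mathcal M$ is injective. Choosing a set of orbit representatives for this action with the identity representing the orbit $H$ produces a right $H$-equivariant retraction $r\colon\mathcal M\to H$ with $ri=\id_H$. Applying the chaotic category functor $E$ gives a functor $E(r)\colon E\mathcal M\to EH$ with $E(r)E(i)=E(ri)=\id_{EH}$ on the nose, whereas $E(i)E(r)=E(ir)$ is merely isomorphic to $\id_{E\mathcal M}$ via the \emph{unique} natural transformation into the indiscrete category $E\mathcal M$, which is automatically an isomorphism and, by the same uniqueness, automatically $H$-equivariant (any $H$-translate of it is again such a transformation, hence equal to it). Whiskering these data with $C$ and applying $\Fun(\blank,C)$ then exhibits $\Fun(E(r),C)$ as the desired $(G\times H)$-equivariant quasi-inverse of $\rho_C$, with counit the identity and unit the whiskered image of the isomorphism $E(ir)\cong\id$.

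For the `in particular', $\Fun(E\mathcal M,f)$ is a $G$-global weak equivalence in $\cat{$\bm{E\mathcal M}$-$\bm G$-Cat}$ exactly when $\Fun(E\mathcal M,f)^\phi$ is a weak equivalence for every universal $H\subset\mathcal M$ and every $\phi\colon H\to G$, which by the first part happens exactly when $f^{\myh\phi}$ is a weak equivalence for all such $H$ and $\phi$. This agrees with the definition of $f$ being a $G$-global weak equivalence in $\cat{$\bm G$-Cat}$ once one notes that $C^{\myh\phi}$ depends on $(H,\phi)$ only up to group isomorphism of $H$, and therefore the quantifier over universal subgroups of $\mathcal M$ can be traded for one over all finite groups using the universal embeddings of \cite[Lemma~1.2.8]{g-global}. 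The only real obstacle in carrying this out is bookkeeping: one must track the two commuting $G$- and $H$-actions carefully through the adjunction $\Ob\dashv E$ and through $\Fun(\blank,C)$, and in particular verify that the unique natural isomorphism $E(ir)\cong\id$ is genuinely equivariant, since an equivariant equivalence of categories without an equivariant quasi-inverse need not induce equivalences on fixed points.
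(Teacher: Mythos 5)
Your proof is correct and follows essentially the same route as the paper: both reduce the claim to showing that restriction along $EH\hookrightarrow E\mathcal M$ is a $(G\times H)$-equivariant equivalence, with quasi-inverse built from an $H$-equivariant retraction $r\colon\mathcal M\to H$ (existing by freeness of the right $H$-action) and the unique, hence automatically equivariant, natural isomorphisms into the indiscrete categories. Your additional bookkeeping (passing to graph-subgroup fixed points, the $2$-out-of-$3$ square, and matching the quantifiers in the ``in particular'') just makes explicit what the paper leaves implicit.
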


\begin{defi}
A small $E\mathcal M$-$G$-category $C$ is called \emph{saturated} if the unit $\eta\colon C\to\Fun(E\mathcal M,\forget C)$ induces equivalences on $\phi$-fixed points for all universal $H\subset\mathcal M$ and all $\phi\colon H\to G$.
\end{defi}

\begin{rk}
As observed in \cite[Remark~4.1.24]{g-global}, an $E\mathcal M$-$G$-category $C$ is saturated if and only if the inclusion $C\hookrightarrow\Fun(E\mathcal M,C)$ of constant diagrams induces equivalences on $\phi$-fixed points for all $\phi$ as above, where the right hand side is now equipped with the diagonal $E\mathcal M$-action. This alternative definition is used in \cite{schwede-k-theory,g-global}.
\end{rk}

\begin{prop}\label{prop:generators-saturated}
Let $K\subset\mathcal M$ be any finite subgroup, let $\psi\colon K\to G$, and let $P$ be any poset (viewed as an $E\mathcal M$-$G$-category with trivial actions). Then $E\mathcal M\times_\psi G\times P$ is saturated.
\begin{proof}
As $\Fun(E\mathcal M,\blank)$, the forgetful functor, and $(\blank)^\phi$ each preserve products, it suffices to show that both $E\mathcal M\times_\psi G$ and $P$ are saturated. But indeed, the first statement is a special case of \cite[Lemma~4.2.10]{g-global}, and for the second statement we observe that $\eta$ is even an isomorphism as $E\mathcal M$ is a connected groupoid while $P$ has no non-trivial isomorphisms.
\end{proof}
\end{prop}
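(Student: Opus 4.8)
The plan is to reduce the statement to its two product factors and then handle each of them separately --- one by a direct elementary computation, the other by a citation.

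First I would record that all three constructions appearing in the definition of saturation preserve finite products: $\Fun(E\mathcal M,\blank)$ is a right adjoint, the forgetful functor $\forget\colon\cat{$\bm{E\mathcal M}$-$\bm G$-Cat}\to\cat{$\bm G$-Cat}$ creates limits, and each fixed-point functor $(\blank)^\phi$ is itself a limit. Hence for a product $C\times D$ of $E\mathcal M$-$G$-categories the unit $\eta_{C\times D}$ identifies, compatibly with the product projections, with the product of $\eta_C$ and $\eta_D$, and this persists after applying $(\blank)^\phi$. Since a product of equivalences of categories is an equivalence, it therefore suffices to prove that $E\mathcal M\times_\psi G$ and the poset $P$ are each saturated.

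For the poset factor I would in fact prove the stronger assertion that $\eta\colon P\to\Fun(E\mathcal M,\forget P)$ is an isomorphism of $E\mathcal M$-$G$-categories (so it certainly induces isomorphisms, and in particular equivalences, on all $\phi$-fixed points). Since $E\mathcal M$ is a connected groupoid, every functor $E\mathcal M\to P$ sends all objects of $E\mathcal M$ to one and the same object of $P$ and all morphisms to identities --- because $P$ has no non-identity isomorphisms --- and a natural transformation between two such constant functors at $p,q\in P$ is precisely a morphism $p\to q$ in $P$. Thus the constant-diagram functor $P\to\Fun(E\mathcal M,P)$ is an isomorphism of categories, and since the $E\mathcal M$-action on $P$ is trivial this is exactly the unit $\eta$; it is $(E\mathcal M\times G)$-equivariant by naturality of the unit.

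For the remaining factor $E\mathcal M\times_\psi G$ there is nothing new to do: this is exactly the content of \cite[Lemma~4.2.10]{g-global}, so I would simply invoke it. I do not expect any genuine obstacle here; the only step demanding a little attention is the reduction, where one must keep track of which $E\mathcal M$-action carries each factor and confirm that ``preserves products'' is being applied along the intended variable. Once this is arranged the poset case is elementary and the free case is a reference.
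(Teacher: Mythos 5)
Your proposal is correct and follows exactly the same route as the paper: reduce to the two factors using that $\Fun(E\mathcal M,\blank)$, the forgetful functor, and $(\blank)^\phi$ all preserve products, cite \cite[Lemma~4.2.10]{g-global} for $E\mathcal M\times_\psi G$, and observe that for the poset $P$ the unit is already an isomorphism since $E\mathcal M$ is a connected groupoid and $P$ has no non-trivial isomorphisms. You merely spell out the details of the reduction and the poset case more explicitly than the paper does.
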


\begin{proof}[Proof of Theorem~\ref{thm:homotopy-fixed-vs-really-fixed}]
Obviously, there is at most one such model structure. We will now verify the assumptions $(\ref{item:tp-unit-we})$, $(\ref{item:tp-pushout})$, and $(\ref{item:tp-filtered})$ of Proposition~\ref{prop:transfer-criterion} for the adjunction $(\ref{eq:G-forget})$. For this we begin by observing that the images of the standard generating (acyclic) cofibrations of $\cat{$\bm{E\mathcal M}$-$\bm G$-Cat}$ are indeed precisely the above sets. Moreover, Lemma~\ref{lemma:hphi-vs-em-phi} tells us that the transferred weak equivalences agree with the $G$-global weak equivalences.

The standard generating acyclic cofibrations of $\cat{$\bm{E\mathcal M}$-$\bm{G}$-Cat}$ have cofibrant sources as the ones of $\cat{$\bm{E\mathcal M}$-$\bm G$-SSet}$ have. The remainder of Condition~$(\ref{item:tp-unit-we})$, stating that the unit is a weak equivalence on $\varnothing$ as well as on sources and targets of the standard generating cofibrations, is a special case of Proposition~\ref{prop:generators-saturated}.

For Condition~$(\ref{item:tp-pushout})$ we consider any pushout
\begin{equation}\label{diag:pushout-g-cat}
\begin{tikzcd}
A \arrow[d]\arrow[r, "i"] & B\arrow[d]\\
C\arrow[r] & D
\end{tikzcd}
\end{equation}
in $\cat{$\bm G$-Cat}$ such that $i$ is a $G$-equivariant Dwyer map; for example, $i$ could be one of the standard generating cofibrations. As all colimits in question are created in $\cat{Cat}$, Lemma~\ref{lemma:pushout-dwyer-fun} shows that applying $\Fun(E\mathcal M,\blank)$ to $(\ref{diag:pushout-g-cat})$ yields a pushout in $\cat{$\bm{E\mathcal M}$-$\bm G$-Cat}$. Moreover, Lemma~\ref{lemma:Dwyer-closure}-$(\ref{item:Dwyer-fun})$ shows that $\Fun(E\mathcal M,i)$ is an $(E\mathcal M\times G)$-equivariant Dwyer map. Thus, Corollary~\ref{cor:homotopy-pushout-m-cat} implies that $\Fun(E\mathcal M,\blank)$ sends $(\ref{diag:pushout-g-cat})$ to a homotopy pushout.

It remains to check that $\Fun(E\mathcal M,\blank)$ preserves filtered colimits up to weak equivalence. For this we observe that the functor $\Fun(E\mathcal M,\blank)^\phi$ is weakly equivalent to $(\blank)^{\myh\phi}=\Fun(EH,\blank)^\phi\colon\cat{$\bm G$-Cat}\to\cat{Cat}$ by the proof of Lemma~\ref{lemma:hphi-vs-em-phi}, so it suffices that $(\blank)^{\myh\phi}$ preserves filtered colimits up to weak equivalence. But as $EH$ is a finite category and since filtered colimits in $\cat{Cat}$ commute with finite limits, it even preserves filtered colimits up to isomorphism.

Hence \hskip0pt minus .2pt Proposition~\hskip0pt minus .2pt \ref{prop:transfer-criterion} \hskip0pt minus .2pt implies \hskip0pt minus .2pt the \hskip0pt minus .2pt existence \hskip0pt minus .2pt of \hskip0pt minus .2pt the \hskip0pt minus .2pt model \hskip0pt minus .2pt structure, \hskip0pt minus .2pt shows that $(\ref{eq:G-forget})$ is a Quillen equivalence, and proves all the desired properties.
\end{proof}

We can also construct a variant of the above model structure with fewer cofibrations, which for $G=1$ recovers Schwede's global model structure on $\cat{Cat}$ recalled in Theorem~\ref{thm:thomason-schwede}:

\begin{prop}\label{prop:thin-g-global}
There is a unique model structure on $\cat{$\bm G$-Cat}$ in which a map is a weak equivalence or fibration if and only if $f^{\myh\phi}$ is a weak equivalence or fibration, respectively, in the Thomason model structure for every universal $H\subset\mathcal M$ and each $\phi\colon H\to G$. We call this the \emph{$G$-global model structure}. It is proper (with homotopy pushouts and pullbacks created by the homotopy fixed point functors $(\blank)^{\myh\phi}$ for varying $\phi$) and combinatorial with generating cofibrations
\begin{equation}\label{eq:g-cat-gen-cof}
\{ EH\times_\phi G\times (\h\Sd^2\del\Delta^n\hookrightarrow\h\Sd^2\Delta^n) : H\text{ finite group},\phi\colon H\to G, n\ge0\}
\end{equation}
and generating acyclic cofibrations
\begin{equation}\label{eq:g-cat-gen-acof}
\{ EH\times_\phi G\times (\h\Sd^2\Lambda_k^n\hookrightarrow\h\Sd^2\Delta^n) : H\text{ finite group},\phi\colon H\to G, 0\le k\le n\}.
\end{equation}
Moreover, filtered colimits in this model structure are homotopcial.

Finally, the adjunction
\begin{equation*}
\id\colon\cat{$\bm G$-Cat}_{\textup{$G$-global}}\rightleftarrows\cat{$\bm G$-Cat}_{\textup{$G$-global thick}} :\!\id
\end{equation*}
is a Quillen equivalence.
\begin{proof}
To construct the model structure and to prove that it has all the properties stated above it suffices to verify the assumptions $(\ref{item:tp-weak-equivalence})$--$(\ref{item:tp-filtered})$ of Proposition~\ref{prop:transfer-criterion} for the adjunction
\begin{equation*}
L\colon\prod_{\phi\colon H\to G}\cat{Cat}\rightleftarrows\cat{$\bm G$-Cat} : \big((\blank)^{\myh\phi}\big)_\phi
\end{equation*}
where the product runs over a set of representatives of isomorphism classes of finite groups $H$ and all homomorphisms $\phi\colon H\to G$. The left adjoint can be calculated as before on objects by $(X_\phi)_\phi\mapsto\coprod_\phi EH\times_\phi G\times X_\phi$ and likewise on morphisms. In particular, the above sets agree up to isomorphism with the images of the standard generating (acyclic) cofibrations of $\prod_{\phi}\cat{Cat}$ under $L$.

It is clear that the right adjoint sends the maps in $(\ref{eq:g-cat-gen-acof})$ to weak equivalences. Moreover, the maps in $(\ref{eq:g-cat-gen-cof})$ are $G$-equivariant Dwyer maps, so the right adjoint sends pushouts along them to homotopy pushouts by the same argument as in the proof of Theorem~\ref{thm:homotopy-fixed-vs-really-fixed}. Finally, the right adjoint clearly preserves filtered colimits.

It is clear that the identity functor $\cat{$\bm G$-Cat}_{\textup{$G$-global}}\to\cat{$\bm G$-Cat}_{\textup{$G$-global thick}}$ preserves and reflects weak equivalences, so it only remains to show that it sends generating cofibrations to cofibrations. To this end we fix a finite group $H$ with a homomorphism $\phi\colon H\to G$, and we pick an injective homomorphism $\iota\colon H\to\mathcal M$ with universal image. To finish the proof, it is now enough to show that $EH\times_\phi G$ is a retract of $E\mathcal M\times_{\phi\iota^{-1}}G$, for which it suffices that $H$ is a right $H$-equivariant retract of $\mathcal M$, where $H$ acts on $\mathcal M$ via $\iota$. But indeed, $\iota\colon H\to\mathcal M$ is an $H$-equivariant injection and $\mathcal M$ is free, so $\iota$ admits an $H$-equivariant retraction as desired.
\end{proof}
\end{prop}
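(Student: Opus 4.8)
The plan is to obtain the $G$-global model structure by transfer along the adjunction
\[
L\colon\prod_{\phi\colon H\to G}\cat{Cat}\rightleftarrows\cat{$\bm G$-Cat} :\!\big((\blank)^{\myh\phi}\big)_\phi
\]
where the product runs over a set of representatives of isomorphism classes of finite groups $H$ together with all homomorphisms $\phi\colon H\to G$, and then to verify the hypotheses $(\ref{item:tp-weak-equivalence})$--$(\ref{item:tp-filtered})$ of Proposition~\ref{prop:transfer-criterion} for $\mathscr C=\prod_\phi\cat{Cat}$ equipped with the product of Thomason model structures. The first task is to identify $L$: since $\triv\dashv(\blank)^H$, since $EH\times(\blank)\dashv\Fun(EH,\blank)$ on $H$-categories, and since restriction $\phi^*$ has the evident induction functor as a left adjoint, $L$ is given on objects by $(X_\phi)_\phi\mapsto\coprod_\phi(EH\times_\phi G)\times X_\phi$. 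In particular $L$ carries the standard generating (acyclic) cofibrations of $\prod_\phi\cat{Cat}$ precisely to the sets $(\ref{eq:g-cat-gen-cof})$ and $(\ref{eq:g-cat-gen-acof})$, so once the transferred model structure exists it automatically has the advertised generators and weak equivalences.

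Conditions $(\ref{item:tp-weak-equivalence})$ and $(\ref{item:tp-filtered})$ are quick. For $(\ref{item:tp-weak-equivalence})$ it suffices that each map in $(\ref{eq:g-cat-gen-acof})$ is a $G$-global weak equivalence, which holds because $\Lambda^n_k\hookrightarrow\Delta^n$ becomes a Thomason weak equivalence after the homotopical functor $\h\Sd^2$, while $(\blank)^{\myh\psi}$ preserves weak equivalences between $G$-categories carrying the \emph{trivial} action (on such categories it computes honest homotopy fixed points of a trivial action, which are homotopy-invariant). For $(\ref{item:tp-filtered})$ one notes that $(\blank)^{\myh\psi}=\Fun(EH,\blank)^H\circ\psi^*$ is built from finite limits---$EH$ being a finite category---and that finite limits commute with filtered colimits in $\cat{Cat}$, so the right adjoint preserves filtered colimits on the nose. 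The substantive point is $(\ref{item:tp-pushout})$, and I would prove the stronger assertion that $(\blank)^{\myh\psi}$ sends every pushout along a $G$-equivariant Dwyer map to a homotopy pushout in the Thomason model structure. The generating cofibrations in $(\ref{eq:g-cat-gen-cof})$ are then covered, since $\h\Sd^2\del\Delta^n\hookrightarrow\h\Sd^2\Delta^n$ is a Dwyer map by Proposition~\ref{prop:h-sd-cofib}, hence a $G$-equivariant Dwyer map for the trivial action by Remark~\ref{rk:ordinary-dwyer}, and the product with $EH\times_\phi G$ stays such by Lemma~\ref{lemma:Dwyer-closure}-$(\ref{item:Dwyer-product})$.

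To establish the stronger assertion, start from a pushout along a $G$-equivariant Dwyer map $i$. Restriction $\psi^*$ preserves all colimits and turns $i$ into an $H$-equivariant Dwyer map, producing a pushout along an $H$-equivariant Dwyer map in $\cat{$\bm H$-Cat}$. As $EH$ carries a (unique) morphism between any ordered pair of its objects, Lemma~\ref{lemma:pushout-dwyer-fun}---applied to the underlying functors, since colimits in $\cat{$\bm H$-Cat}$ are created in $\cat{Cat}$---shows that $\Fun(EH,\blank)$ keeps this a pushout, while Lemma~\ref{lemma:Dwyer-closure}-$(\ref{item:Dwyer-fun})$ shows $\Fun(EH,\psi^*i)$ is again an $H$-equivariant Dwyer map. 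Regarding $H$ as a monoid in $\cat{Cat}$ (all of whose subgroups are good), Corollary~\ref{cor:homotopy-pushout-m-cat} identifies the resulting square as a homotopy pushout in the model structure on $\cat{$\bm H$-Cat}$ provided by Theorem~\ref{thm:cat-vs-sset}, and the accompanying fixed-point criterion for homotopy pushouts then shows that passing to $H$-fixed points yields a homotopy pushout in the Thomason model structure---which is exactly the effect of $(\blank)^{\myh\psi}$ on the original square. With $(\ref{item:tp-weak-equivalence})$--$(\ref{item:tp-filtered})$ in hand, Proposition~\ref{prop:transfer-criterion} delivers the model structure, identifies $(\ref{eq:g-cat-gen-cof})$ and $(\ref{eq:g-cat-gen-acof})$ as generating (acyclic) cofibrations, and yields combinatoriality, left properness with homotopy pushouts created by $((\blank)^{\myh\phi})_\phi$, and homotopicality of filtered colimits; right properness together with the creation of homotopy pullbacks follows since the Thomason model structure is right proper.

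For the closing Quillen equivalence I would first observe that the thin and thick $G$-global model structures have the same weak equivalences: testing $f^{\myh\phi}$ on universal subgroups $H\subset\mathcal M$ amounts to testing it on all finite groups, since every finite group embeds into $\mathcal M$ with universal image and the resulting `homotopy' fixed points are isomorphic. Hence $\id$ preserves and reflects weak equivalences, and a Quillen adjunction given by the identity functors between two model structures with the same weak equivalences is automatically a Quillen equivalence; so it only remains to see that $\id\colon\cat{$\bm G$-Cat}_{\textup{$G$-global}}\to\cat{$\bm G$-Cat}_{\textup{$G$-global thick}}$ sends each generating cofibration in $(\ref{eq:g-cat-gen-cof})$ to a cofibration of the thick structure. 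Choosing, for a finite group $H$ with $\phi\colon H\to G$, an injective $\iota\colon H\to\mathcal M$ with universal image, it is enough to exhibit $EH\times_\phi G$ as a retract of $E\mathcal M\times_{\phi\iota^{-1}}G$, because then $(EH\times_\phi G)\times(\h\Sd^2\del\Delta^n\hookrightarrow\h\Sd^2\Delta^n)$ is a retract of a generating cofibration of the thick structure. This in turn reduces to exhibiting $H$ as a right $H$-equivariant retract of $\mathcal M$ with $H$ acting through $\iota$: the right $H$-action on $\mathcal M$ is free because every subgroup of $\mathcal M$ is good, so the $H$-equivariant injection $\iota$ admits an $H$-equivariant retraction, and applying $E(\blank)$ and inducing along the appropriate graph subgroups produces the retract $EH\times_\phi G\to E\mathcal M\times_{\phi\iota^{-1}}G\to EH\times_\phi G$. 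I expect the main obstacle to be the bookkeeping in $(\ref{item:tp-pushout})$---carrying a pushout along an equivariant Dwyer map successively through $\psi^*$, $\Fun(EH,\blank)$, and $(\blank)^H$ while keeping track of which of Lemmas~\ref{lemma:pushout-dwyer-fun}, \ref{lemma:Dwyer-closure} and Corollary~\ref{cor:homotopy-pushout-m-cat} applies at each stage---whereas the only genuinely new ingredient, beyond the machinery already built for Theorem~\ref{thm:homotopy-fixed-vs-really-fixed}, is the freeness argument realizing $EH\times_\phi G$ as a retract of $E\mathcal M\times_{\phi\iota^{-1}}G$.
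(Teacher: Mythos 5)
Your overall strategy is exactly the paper's: transfer along $L\dashv\big((\blank)^{\myh\phi}\big)_\phi$ indexed by all finite groups and homomorphisms to $G$, verification of $(\ref{item:tp-weak-equivalence})$--$(\ref{item:tp-filtered})$ of Proposition~\ref{prop:transfer-criterion} with the pushout condition handled by carrying a pushout along a $G$-equivariant Dwyer map through $\psi^*$, $\Fun(EK,\blank)$ and $(\blank)^K$, and the final Quillen equivalence via the retract $EH\times_\phi G\to E\mathcal M\times_{\phi\iota^{-1}}G\to EH\times_\phi G$ obtained from an $H$-equivariant retraction of $\iota$. Conditions $(\ref{item:tp-pushout})$ and $(\ref{item:tp-filtered})$ and the closing retract argument are all fine and agree with the paper.

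The step I cannot accept as written is your verification of Condition~$(\ref{item:tp-weak-equivalence})$. You assert that $(\blank)^{\myh\psi}$ preserves Thomason weak equivalences between $G$-categories with trivial action because ``on such categories it computes honest homotopy fixed points of a trivial action, which are homotopy-invariant.'' This principle is false: on a trivial $G$-category $C$ one has $C^{\myh\psi}\cong\Fun(BK,C)$ (with $K$ the source of $\psi$), the `homotopy' fixed points here are taken with respect to the \emph{canonical} model structure rather than the Thomason one (see Remark~\ref{rk:homotopy-fixed-points}), and $\Fun(BK,\blank)$ does \emph{not} preserve Thomason weak equivalences---if it did for every finite $K$, the global weak equivalences would coincide with the Thomason ones and Schwede's global model structure (Theorem~\ref{thm:thomason-schwede}) would have the same homotopy category as Thomason's, which it does not. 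Fortunately the instance you need is true and easy to repair: $\h\Sd^2\Lambda^n_k$ and $\h\Sd^2\Delta^n$ are posets (the fact from \cite{thomason-cat} already invoked in the proof of Theorem~\ref{thm:cat-vs-sset}), every functor from the connected groupoid $EK$ to a poset is constant, so $P^{\myh\psi}\cong P$ for any poset $P$ with trivial action; since $(\blank)^{\myh\psi}$ preserves finite products, applying it to a map in $(\ref{eq:g-cat-gen-acof})$ yields $(EH\times_\phi G)^{\myh\psi}\times\h\Sd^2(\Lambda^n_k\hookrightarrow\Delta^n)$, which is a Thomason weak equivalence. (Alternatively, your own retract argument exhibits the maps in $(\ref{eq:g-cat-gen-acof})$ as retracts of the generating acyclic cofibrations of the thick structure from Theorem~\ref{thm:homotopy-fixed-vs-really-fixed}, which are already known to be $G$-global weak equivalences.) With this repair your proof coincides with the paper's.
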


Finally, let us compare the above to the usual proper $G$-equivariant model structure on $\cat{$\bm G$-Cat}$:

\begin{defi}
We call a map $f\colon C\to D$ in $\cat{$\bm G$-Cat}$ a \emph{$G$-equivariant `homotopy' weak equivalence} if $f^{\myh H}=\Fun(EH,f)^H$ is a weak equivalence for every finite subgroup $H\subset G$.
\end{defi}

\begin{thm}\label{thm:g-global-cat-vs-g-proper-cat}
The functor
\begin{equation}\label{eq:Fun-EG-loc}
\Fun(EG,\blank)\colon\cat{$\bm G$-Cat}_{\textup{$G$-global}}\to\cat{$\bm G$-Cat}_{\textup{proper}}
\end{equation}
is homotopical. The induced functor of homotopy categories is is a (Bousfield) localization at the $G$-equivariant `homotopy' weak equivalences, and it is the third term in a sequence of four adjoints.
\begin{proof}
We claim that the diagram
\begin{equation}\label{diag:EG-vs-EM}
\begin{tikzcd}
\cat{$\bm G$-Cat}_{\textup{$G$-global}}\arrow[r, "{\Fun(EG,\blank)}"]\arrow[d, "{\Fun(E\mathcal M,\blank)}"'] & \cat{$\bm G$-Cat}_{\textup{proper}}\arrow[d, "\triv_{E\mathcal M}"]\\
\cat{$\bm{E\mathcal M}$-$\bm G$-Cat}_{\textup{$G$-global}}\arrow[r, "\textup{id}"'] & \cat{$\bm{E\mathcal M}$-$\bm G$-Cat}_{\textup{$\mathcal E$-weak equivalences}}
\end{tikzcd}
\end{equation}
of homotopical functors commutes up to a zig-zag of levelwise weak equivalences. Before we prove this, let us show how it implies the theorem: the vertical maps induce equivalences of homotopy categories by Theorem~\ref{thm:homotopy-fixed-vs-really-fixed} and Proposition~\ref{prop:G-global-vs-G-equiv-cat}, respectively. Thus, the top arrow induces a localization at those maps that are inverted by the functor induced by the lower composite. As the $\mathcal E$-weak equivalences are saturated (being the weak equivalences of a model structure), these are precisely those maps $f$ such that $\Fun(E\mathcal M,f)$ is an $\mathcal E$-weak equivalence, which we as before identify with the $G$-equivariant `homotopy' weak equivalences. Finally, $(\blank)^{\cat{R}E\mathcal M}$ is a localization of $\Ho(\cat{$\bm{E\mathcal M}$-$\bm G$-Cat}_{\textup{$G$-global}})$ at the $\mathcal E$-weak equivalences by Proposition~\ref{prop:G-global-vs-G-equiv-cat}, hence equivalent to the functor induced by
\begin{equation}\label{eq:identity-G-global-vs-E}
\id\colon\cat{$\bm{E\mathcal M}$-$\bm G$-Cat}_{\textup{$G$-global}}\to\cat{$\bm{E\mathcal M}$-$\bm G$-Cat}_{\textup{$\mathcal E$-weak equivalences}}.
\end{equation}
As $(\blank)^{\cat{R}E\mathcal M}$ is the third term in a sequence of four adjoints by the aforementioned proposition, so is the functor induced by $(\ref{eq:identity-G-global-vs-E})$, and hence also the one induced by $\Fun(EG,\blank)$ as desired.

It remains to construct a zig-zag of natural levelwise weak equivalences filling $(\ref{diag:EG-vs-EM})$. For this we will show more generally that for any $E\mathcal M$-$G$-category $C$ the maps
\begin{equation*}
\Fun(EG,C)\to\Fun(EG\times E\mathcal M,C)\gets\Fun(E\mathcal M,C)
\end{equation*}
induced by the projections $EG\gets EG\times E\mathcal M\to E\mathcal M$ are $\mathcal E$-weak equivalences, for which it is enough that the projections are $H$-equivariant equivalences for every universal $H\subset\mathcal M$ and every \emph{injective} $\phi\colon H\to G$ (when we let $H$ act on $G$ via $\phi$). This follows as before as both $G$ and $\mathcal M$ are free right $H$-sets with respect to these actions.
\end{proof}
\end{thm}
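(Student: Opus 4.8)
The plan is to compare $\Fun(EG,\blank)$ with the localization functor $(\blank)^{\cat{R}E\mathcal M}$ of Proposition~\ref{prop:G-global-vs-G-equiv-cat} by means of the square
\begin{equation*}
\begin{tikzcd}
\cat{$\bm G$-Cat}_{\textup{$G$-global}}\arrow[r, "{\Fun(EG,\blank)}"]\arrow[d, "{\Fun(E\mathcal M,\blank)}"'] & \cat{$\bm G$-Cat}_{\textup{proper}}\arrow[d, "\triv_{E\mathcal M}"]\\
\cat{$\bm{E\mathcal M}$-$\bm G$-Cat}_{\textup{$G$-global}}\arrow[r, "\textup{id}"'] & \cat{$\bm{E\mathcal M}$-$\bm G$-Cat}_{\textup{$\mathcal E$-weak equivalences}}
\end{tikzcd}
\end{equation*}
of homotopical functors, which I would then show commutes up to a zig-zag of levelwise weak equivalences. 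First I would check that $\Fun(EG,\blank)$ is itself homotopical: running the argument of Lemma~\ref{lemma:hphi-vs-em-phi} with $G$ in place of $\mathcal M$ (using that $G$ is a free left $H$-set for every subgroup $H\subset G$), restriction along $EH\hookrightarrow EG$ is an $H$-equivariant equivalence of categories, so $\Fun(EG,f)^H\simeq\Fun(EH,f)^H=f^{\myh H}$, which is a Thomason weak equivalence whenever $f$ is a $G$-global weak equivalence. The left-hand and right-hand vertical functors descend to equivalences of homotopy categories by Theorem~\ref{thm:homotopy-fixed-vs-really-fixed} (via Proposition~\ref{prop:thin-g-global}) and by Proposition~\ref{prop:G-global-vs-G-equiv-cat}, respectively.

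The technical core is the commutativity up to zig-zag. For every $E\mathcal M$-$G$-category $C$ I would form the natural span
\begin{equation*}
\Fun(EG,C)\longrightarrow\Fun(EG\times E\mathcal M,C)\longleftarrow\Fun(E\mathcal M,C)
\end{equation*}
induced by the two product projections, and specialize to $C=\triv_{E\mathcal M}X$ to connect the two composites of the square. Both legs of the span are $\mathcal E$-weak equivalences: for every universal $H\subset\mathcal M$ and every \emph{injective} $\phi\colon H\to G$, each of the projections $EG\gets EG\times E\mathcal M\to E\mathcal M$ is a $\Gamma_{H,\phi}$-equivariant equivalence of categories, because $G$ is a free left $H$-set via $\phi$ (here injectivity is essential) and $\mathcal M$ is a free left $H$-set via the inclusion, so in either case an equivariant quasi-inverse section can be manufactured by choosing orbit representatives. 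Applying $\Fun(\blank,C)$ turns these into $\Gamma_{H,\phi}$-equivariant equivalences of categories, which in particular induce Thomason weak equivalences on $\Gamma_{H,\phi}$-fixed points.

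Given the square, the rest is formal. Passing to homotopy categories, $\Ho(\Fun(EG,\blank))$ becomes, after pre- and post-composing with the two vertical equivalences, the functor induced by $\id\colon\cat{$\bm{E\mathcal M}$-$\bm G$-Cat}_{\textup{$G$-global}}\to\cat{$\bm{E\mathcal M}$-$\bm G$-Cat}_{\textup{$\mathcal E$-w.e.}}$; by Proposition~\ref{prop:G-global-vs-G-equiv-cat} this is (equivalent to) $(\blank)^{\cat{R}E\mathcal M}$, which is a Bousfield localization and the third term in a sequence of four adjoints. To identify the inverted maps I would use that the $\mathcal E$-weak equivalences are saturated, so the bottom composite inverts exactly those $f$ for which $\Fun(E\mathcal M,f)$ is an $\mathcal E$-weak equivalence; by Lemma~\ref{lemma:hphi-vs-em-phi}, applied over the finite subgroups of $G$ (equivalently, over the injective homomorphisms into $G$), these are precisely the $G$-equivariant `homotopy' weak equivalences.

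The main obstacle I anticipate is the bookkeeping in the second step: one has to keep careful track of the $E\mathcal M$- and $G$-actions carried by $\Fun(EG,C)$, $\Fun(EG\times E\mathcal M,C)$, and $\Fun(E\mathcal M,C)$ so that the projection-induced comparison maps really are morphisms in $\cat{$\bm{E\mathcal M}$-$\bm G$-Cat}$, and so that being an $\mathcal E$-weak equivalence may be verified after forgetting down to plain equivalences of categories on $\phi$-fixed points. Everything outside that step follows formally from the results already established.
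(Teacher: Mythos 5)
Your proposal is correct and follows essentially the same route as the paper: the same comparison square involving $\Fun(E\mathcal M,\blank)$ and $\triv_{E\mathcal M}$, the same zig-zag $\Fun(EG,C)\to\Fun(EG\times E\mathcal M,C)\gets\Fun(E\mathcal M,C)$ justified by freeness of the $H$-actions on $G$ (via an injective $\phi$) and on $\mathcal M$, and the same formal deduction via the saturation of the $\mathcal E$-weak equivalences and Proposition~\ref{prop:G-global-vs-G-equiv-cat}. The only cosmetic difference is that you verify homotopicality of $\Fun(EG,\blank)$ separately via the restriction $EH\hookrightarrow EG$, which is a fine (and correct) addition.
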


\begin{rk}
Using a similar argument as in the proof of Proposition~\ref{prop:thin-g-global} it is not hard to show that $(\ref{eq:Fun-EG-loc})$ is right Quillen for the thick $G$-global model structure whenever the cardinality of $G$ is at most $|\mathcal M|$, so that we have a Quillen adjunction
\begin{equation*}
EG\times\blank\colon \cat{$\bm G$-Cat}_{\textup{proper}}\rightleftarrows\cat{$\bm G$-Cat}_{\textup{thick $G$-global}}: \!\Fun(EG,\blank).
\end{equation*}
\end{rk}

\begin{rk}
Already an ordinary global space has an \emph{underlying $G$-space} for every finite group. In \cite[Example~3.21]{schwede-cat}, Schwede gives an explicit model of this in terms of a Quillen adjunction
\begin{equation}\label{eq:cat-vs-g-cat}
EG\times_G\blank\colon\cat{$\bm G$-Cat}_{\textup{$G$-equivariant}}\rightleftarrows\cat{Cat}_{\mathcal G}:\!\Fun(EG,\blank)
\end{equation}
with homotopical right adjoint for a certain model structure on $\cat{Cat}$ with the same weak equivalences as the global one, but more cofibrations.

The induced adjunction on homotopy categories is in fact a shadow of Theorem~\ref{thm:g-global-cat-vs-g-proper-cat}: for every $\alpha\colon G\to H$, the functor $\alpha^*\colon\cat{$\bm H$-Cat}\to\cat{$\bm G$-Cat}$ is easily seen to be right Quillen with respect to the thick $H$-global and thick $G$-global model structure, respectively. Specializing this to $H=1$ we obtain together with the previous remark a chain of Quillen adjunctions
\begin{equation*}
\begin{tikzcd}
\cat{$\bm G$-Cat}_{\textup{$G$-equivariant}}\arrow[r, "EG\times\blank", shift left=2pt] &[2em] \arrow[l, "{\Fun(EG,\blank)}", shift left=2pt]\cat{$\bm G$-Cat}_{\textup{thick $G$-global}}\arrow[r, "(\blank)/G", shift left=2pt] &[.5em]\arrow[l, "\triv_G", shift left=2pt]\cat{Cat}_{\textup{thick global}}
\end{tikzcd}
\end{equation*}
with homotopical right adjoints, whose composition agrees with $(\ref{eq:cat-vs-g-cat})$.
\end{rk}

As an application of the above comparison, we can now introduce a new model structure on $\cat{$\bm G$-Cat}$ representing proper $G$-equivariant homotopy theory, whose weak equivalences are tested on `homotopy' fixed points:

\begin{thm}
There is a unique model structure on $\cat{$\bm G$-Cat}$ in which a map $f$ is a weak equivalence or fibration if and only if $\Fun(EG,f)^H$ is a weak equivalence or fibration, respectively, in the Thomason model structure on $\cat{Cat}$ for each finite subgroup $H\subset G$. We call this the \emph{thick $G$-equivariant `homotopy' fixed point model structure}; its weak equivalences are precisely the $G$-equivariant `homotopy' weak equivalences. This model structure is combinatorial with generating cofibrations
\begin{equation*}
\{EG\times_HG\times(\h\Sd^2\del\Delta^n\hookrightarrow\h\Sd^2\Delta^n) : H\subset G\textup{ finite},n\ge0\}
\end{equation*}
and generating acyclic cofibrations
\begin{equation*}
\{EG\times_HG\times(\h\Sd^2\Lambda_k^n\hookrightarrow\h\Sd^2\Delta^n) : H\subset G\textup{ finite},0\le k\le n\}
\end{equation*}
and proper; a commutative square is a homotopy pushout or pullback if and only if the induced square on $H$-`homotopy' fixed points is a homotopy pushout or pullback, respectively, in the Thomason model structure on $\cat{Cat}$ for each finite $H\subset G$. Moreover, filtered colimits in this model structure are homotopical.

Likewise, there is a \emph{$G$-equivariant `homotopy' fixed point model structure} on $\cat{$\bm G$-Cat}$ in which a map $f$ is a weak equivalence or fibration if and only if $f^{\myh H}=\Fun(EH,f)^H$ is a weak equivalence or fibration, respectively, in the Thomason model structure for every finite $H\subset G$. It is again proper with the above characterization of homotopy pushouts and pullbacks, and it is morover combinatorial with generating cofibrations
\begin{equation*}
\{EH\times_HG\times(\h\Sd^2\del\Delta^n\hookrightarrow\h\Sd^2\Delta^n) : H\subset G\textup{ finite},n\ge0\}
\end{equation*}
and generating acyclic cofibrations
\begin{equation*}
\{EH\times_HG\times(\h\Sd^2\Lambda_k^n\hookrightarrow\h\Sd^2\Delta^n) : H\subset G\textup{ finite},0\le k\le n\}.
\end{equation*}
Finally, the adjunctions
\begin{align}\label{eq:thick-vs-thin-g-homotopy}
\id\colon\cat{$\bm G$-Cat}_{\textup{$G$-`homotopy'}}&\rightleftarrows\cat{$\bm G$-Cat}_{\textup{thick $G$-`homotopy'}} :\!\id\\
\label{eq:equiv-vs-homotopy}
EG\times\blank\colon\cat{$\bm G$-Cat}_{\textup{proper $G$-equivariant}} &\rightleftarrows \cat{$\bm G$-Cat}_{\textup{thick $G$-`homotopy'}} :\!\Fun(EG,\blank)
\end{align}
are Quillen equivalences.
\begin{proof}
To construct the model structures and to establish the above properties, it is enough to verify the assumptions $(\ref{item:tp-weak-equivalence})$--$(\ref{item:tp-filtered})$ of Proposition~\ref{prop:transfer-criterion}, which can be done just as in the proof of Proposition~\ref{prop:thin-g-global}. The same argument as in Lemma~\ref{lemma:hphi-vs-em-phi} and the aforementioned proposition then shows that $(\ref{eq:thick-vs-thin-g-homotopy})$ is a Quillen adjunction and that both sides have the same weak equivalences, so that it is even a Quillen equivalence.

For the final statement we observe that $\Fun(EG,\blank)$ preserves (and reflects) weak equivalences as well as fibrations by definition; in particular, $(\ref{eq:equiv-vs-homotopy})$ is a Quillen adjunction. It only remains to show that $\Fun(EG,\blank)$ descends to an equivalence, which is immediate from Theorem~\ref{thm:g-global-cat-vs-g-proper-cat}.
\end{proof}
\end{thm}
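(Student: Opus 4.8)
The plan is to obtain both model structures as transfers in the sense of Proposition~\ref{prop:transfer-criterion}, copying the pattern of the proofs of Theorem~\ref{thm:homotopy-fixed-vs-really-fixed} and Proposition~\ref{prop:thin-g-global}. For the thick $G$-equivariant `homotopy' fixed point model structure I would transfer along
\[
L\colon\prod_{H\subset G\textup{ finite}}\cat{Cat}\rightleftarrows\cat{$\bm G$-Cat} :\!\big(\Fun(EG,\blank)^H\big)_H,
\]
where the product is taken over representatives of conjugacy classes of finite subgroups $H\subset G$ and $L$ is computed on objects by $(X_H)_H\mapsto\coprod_H EG\times_HG\times X_H$; the point is that the right adjoint of $EG\times_HG\times\blank\colon\cat{Cat}\to\cat{$\bm G$-Cat}$ is $\Fun(EG,\blank)^H$, i.e.\ the category of $H$-equivariant functors $EG\to\blank$. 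For the $G$-equivariant `homotopy' fixed point model structure one uses the same recipe with $\Fun(EG,\blank)^H$ replaced by $(\blank)^{\myh H}=\Fun(EH,\blank)^H$ and $EG\times_HG$ replaced by $EH\times_HG$. In either case the $L$-images of the standard generating (acyclic) cofibrations of $\prod_H\cat{Cat}$ are, up to isomorphism, exactly the sets named in the theorem.

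Next I would check Conditions~$(\ref{item:tp-weak-equivalence})$--$(\ref{item:tp-filtered})$ of Proposition~\ref{prop:transfer-criterion} for these adjunctions, which runs essentially word for word as in the proofs of Theorem~\ref{thm:homotopy-fixed-vs-really-fixed} and Proposition~\ref{prop:thin-g-global}. Condition~$(\ref{item:tp-weak-equivalence})$ is clear since the right adjoints preserve products and the maps $\h\Sd^2\Lambda^n_k\hookrightarrow\h\Sd^2\Delta^n$ are acyclic cofibrations in the Thomason model structure. For Condition~$(\ref{item:tp-pushout})$ one observes that the $L$-images of the generating cofibrations are $G$-equivariant Dwyer maps---$\h\Sd^2(\del\Delta^n\hookrightarrow\Delta^n)$ is an ordinary Dwyer map by Proposition~\ref{prop:h-sd-cofib}, and this survives multiplication by a $G$-category by Lemma~\ref{lemma:Dwyer-closure}-$(\ref{item:Dwyer-product})$---and, more generally, that $\Fun(EG,\blank)^H$ and $(\blank)^{\myh H}$ turn any pushout along a $G$-equivariant Dwyer map into a pushout along an ordinary Dwyer map in $\cat{Cat}$, hence into a homotopy pushout: by Lemma~\ref{lemma:Dwyer-closure}-$(\ref{item:Dwyer-fun})$ and Lemma~\ref{lemma:pushout-dwyer-fun} the functors $\Fun(EG,\blank)$ and $\Fun(EH,\blank)$ first produce a pushout along an equivariant Dwyer map, then Lemma~\ref{lemma:Dwyer-closure}-$(\ref{item:Dwyer-fixed-points})$ and Proposition~\ref{prop:pushout-cat-groups} pass to fixed points, and Corollary~\ref{cor:homotopy-pushout-m-cat} (for trivial $M$) identifies the result as a homotopy pushout. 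Condition~$(\ref{item:tp-filtered})$ holds because $EH$ is a finite category and filtered colimits commute with finite limits in $\cat{Cat}$, so $(\blank)^{\myh H}$ preserves filtered colimits up to isomorphism; the same then follows for $\Fun(EG,\blank)^H$ after identifying it with $(\blank)^{\myh H}$ up to weak equivalence as in Lemma~\ref{lemma:hphi-vs-em-phi}. Proposition~\ref{prop:transfer-criterion} then supplies the model structures with the claimed generators, combinatoriality, left properness, homotopical filtered colimits, and---since the Thomason model structure on $\cat{Cat}$ is proper---right properness with homotopy pushouts and pullbacks created by the fixed point functors; the stated characterization of homotopy pushouts and pullbacks then follows by transporting the corresponding one in $\cat{Cat}$.

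For the Quillen equivalences I would argue as follows. The identity adjunction $(\ref{eq:thick-vs-thin-g-homotopy})$ is a Quillen adjunction because the two model structures have the same weak equivalences: for a fixed finite $H\subset G$, restriction along $EH\hookrightarrow EG$ using an $H$-equivariant retraction $G\to H$ (which exists as $G$ is $H$-free) gives, exactly as in Lemma~\ref{lemma:hphi-vs-em-phi}, an $H$-equivariant equivalence $\Fun(EG,C)\xrightarrow{\simeq}\Fun(EH,C)$ natural in $C$, so that $\Fun(EG,f)^H$ is a Thomason equivalence iff $f^{\myh H}$ is; and the identity sends the thin generating cofibrations to thick cofibrations, which reduces to $EH\times_HG$ being a retract of $EG\times_HG$ in $\cat{$\bm G$-Cat}$ and ultimately to $H$ being a right $H$-equivariant retract of $G$. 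Since the two model structures share their weak equivalences, this Quillen adjunction given by the identity is automatically a Quillen equivalence. For $(\ref{eq:equiv-vs-homotopy})$, the functor $\Fun(EG,\blank)$ preserves and reflects both weak equivalences and fibrations by the very definition of the thick $G$-equivariant `homotopy' fixed point model structure, so the adjunction is a Quillen adjunction with homotopical right adjoint, and it only remains to see that $\Fun(EG,\blank)$ descends to an equivalence of homotopy categories. This I would deduce from Theorem~\ref{thm:g-global-cat-vs-g-proper-cat}: every $G$-global weak equivalence is in particular a $G$-equivariant `homotopy' weak equivalence, so $\Ho(\cat{$\bm G$-Cat}_{\textup{thick $G$-`homotopy'}})$ is the localization of $\Ho(\cat{$\bm G$-Cat}_{\textup{$G$-global}})$ at the $G$-equivariant `homotopy' weak equivalences, and Theorem~\ref{thm:g-global-cat-vs-g-proper-cat} identifies precisely that localization with $\Ho(\cat{$\bm G$-Cat}_{\textup{proper}})$ via $\Fun(EG,\blank)$.

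I do not expect a deep difficulty: every substantive input---the explicit pushouts along ($G$-)equivariant Dwyer maps, the comparison $\Fun(E\mathcal M,\blank)\simeq\Fun(EH,\blank)$, and the four-term adjunction of Theorem~\ref{thm:g-global-cat-vs-g-proper-cat}---has already been established. The main obstacle is organizational: keeping straight which of the several model structures on $\cat{$\bm G$-Cat}$ sits on each side of $(\ref{eq:equiv-vs-homotopy})$ and carefully matching the various notions of weak equivalence coming from $\Fun(EG,\blank)^H$, $\Fun(EH,\blank)^H$ and $(\blank)^\phi$, since $\Fun(EG,\blank)$ visibly lands in the proper $G$-equivariant model structure whereas its identification with a genuine Bousfield localization is most transparent on the $E\mathcal M$-equivariant side.
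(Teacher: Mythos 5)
Your proposal is correct and follows essentially the same route as the paper: both model structures are obtained by transferring along the product of `homotopy' fixed point functors via Proposition~\ref{prop:transfer-criterion}, with the Dwyer-map/pushout and filtered-colimit verifications copied from Theorem~\ref{thm:homotopy-fixed-vs-really-fixed} and Proposition~\ref{prop:thin-g-global}, the identity Quillen equivalence via the same retraction argument, and the final equivalence deduced from the localization statement of Theorem~\ref{thm:g-global-cat-vs-g-proper-cat}. You have in fact spelled out several steps (e.g.\ the comparison of $\Fun(EG,\blank)^H$ with $(\blank)^{\myh H}$ and the identification of the two localizations) that the paper leaves implicit.
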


\frenchspacing
\newcommand{\etalchar}[1]{$^{#1}$}

\parindent=0pt
Tobias Lenz\\
Max-Planck-Institut für Mathematik\\
Vivatsgasse 7\\
53111 Bonn (Germany)\par\medskip

\textit{Current address:}\\
Mathematical Institute, University of Utrecht\\
Budapestlaan 6\\
3584$\,$CD Utrecht (The Netherlands)\\
\texttt{t.lenz@uu.nl}
\end{document}